\documentclass[12pt]{amsart}

\usepackage{mathtools,amsthm,amssymb,tikz-cd,mathrsfs,scalerel}
\usepackage[colorlinks,linkcolor=red!75!black,citecolor=green!50!black,urlcolor=cyan!75!black]{hyperref}

\newcommand{\Bibkeyhack}[3]{}

\usepackage{lmodern}
\usepackage{microtype}

\usepackage[margin=1in]{geometry}
\renewcommand{\footnoterule}{\vfill\kern -3pt \hrule width 0.4\columnwidth \kern 2.6pt}

\usepackage{enumitem}
\setlist{topsep=0.5ex,itemsep=0ex,parsep=0ex,leftmargin=3em,listparindent=\parindent}

\newenvironment{conditions}[1][]{\enumerate[label=\textnormal{({\roman{enumi}})}]}{\endenumerate}

\newtheoremstyle{thmstyle}{}{}{\itshape}{}{\bfseries}{. }{0pt}{\thmnumber{#2.\ }\thmname{#1}\thmnote{\textnormal{ (#3)}}}

\newtheoremstyle{defnstyle}{}{}{}{}{\bfseries}{.}{ }{\thmnumber{#2.\ }\thmname{#1}\thmnote{\textnormal{ (#3)}}}

\newtheoremstyle{thingstyle}{}{}{}{}{\bfseries}{. }{0pt}{}

\theoremstyle{thmstyle}
\newtheorem{thm}{Theorem}
\newtheorem{lem}[thm]{Lemma}
\newtheorem{prop}[thm]{Proposition}
\newtheorem{cor}[thm]{Corollary}

\theoremstyle{defnstyle}
\newtheorem*{acknowledgements}{Acknowledgements}
\newtheorem*{ai}{Statement of AI use}

\newtheorem{defn}[thm]{Definition}
\newtheorem{eg}[thm]{Example}

\newtheorem*{notation}{Notation}
\newtheorem{ontheproofs}[thm]{On the proofs}

\newtheorem{rmk}[thm]{Remark}
\newtheorem*{results}{Summary of main results}

\theoremstyle{thingstyle}
\newtheorem{thing}[thm]{}

\counterwithin{equation}{thm}

\DeclareMathOperator{\charac}{char}
\DeclareMathOperator{\codim}{codim}

\DeclareMathOperator{\tr}{tr}

\newcommand{\Coker}{\mathrm{Coker}}
\newcommand{\End}{\mathrm{End}}
\newcommand{\Gal}{\mathrm{Gal}}

\newcommand{\Hom}{\mathrm{Hom}}
\newcommand{\id}{\mathrm{id}}
\newcommand{\Img}{\mathrm{Img}}
\newcommand{\Ker}{\mathrm{Ker}}

\newcommand{\Spec}{\mathrm{Spec}}

\newcommand{\mbb}{\mathbb}
\newcommand{\mbf}{\mathbf}
\newcommand{\mc}{\mathcal}
\newcommand{\mf}{\mathfrak}
\newcommand{\mr}{\mathrm}

\newcommand{\tit}{\textit}
\newcommand{\tn}{\textnormal}

\newcommand{\ol}{\overline}

\newcommand{\C}{\mathbb{C}}
\newcommand{\F}{\mathbb{F}}

\newcommand{\Gm}{\mathbb{G}_\mathrm{m}}
\newcommand{\cO}{\mathcal{O}}

\newcommand{\Q}{\mathbb{Q}}

\newcommand{\Z}{\mathbb{Z}}

\usepackage[only,llbracket,rrbracket]{stmaryrd}

\newcommand{\ceq}{\coloneqq}
\newcommand{\cln}{\colon}
\newcommand{\gen}[1]{\langle{#1}\rangle}
\newcommand{\into}{\hookrightarrow}
\newcommand{\iso}{\overset{\sim}{\to}}
\newcommand{\onto}{\mathrel{\mathrlap{\rightarrow}\mkern-2.25mu\rightarrow}}
\newcommand{\oo}{\infty}

\let\tempv\phi
\let\phi\varphi
\let\varphi\tempv

\let\tempe\epsilon
\let\epsilon\varepsilon
\let\varepsilon\tempe

\let\setminus\smallsetminus

\makeatletter
\newcommand{\filcolim@}[2]{%
  \vtop{\m@th\ialign{##\cr
    \hfil$#1\operator@font colim$\hfil\cr
    \noalign{\nointerlineskip\kern1.5\ex@}#2\cr
    \noalign{\nointerlineskip\kern-\ex@}\cr}}%
}
\newcommand{\filcolim}{%
  \mathop{\mathpalette\filcolim@{\rightarrowfill@\textstyle}}\nmlimits@
}
\makeatother

\newcommand{\Br}{\mathrm{Br}}
\newcommand{\CH}{\mathrm{CH}}
\newcommand{\CHM}{\mathbf{CHM}}
\newcommand{\cl}{\mathrm{cl}}

\newcommand{\DM}{\mathbf{DM}}
\newcommand{\DMgm}{\mathbf{DM}_\mathrm{gm}}

\newcommand{\fh}{\mathfrak{h}}

\newcommand{\ft}{\mathfrak{t}}
\newcommand{\ggen}[1]{\langle\!\langle#1\rangle\!\rangle}
\newcommand{\Kum}{\mathrm{Kum}}
\newcommand{\mot}{\mathrm{mot}}
\newcommand{\NS}{\mathrm{NS}}

\newcommand{\Pic}{\mathrm{Pic}}

\newcommand{\tors}{\mathrm{tors}}
\newcommand{\ur}{\mathrm{ur}}

\numberwithin{thm}{section}

\title{On the torsion in the Chow motive of an Enriques surface}
\author{Jake Huryn and William C.\ Newman}
\address{The Ohio State University}
\email{huryn.5@osu.edu}
\address{The Ohio State University}
\email{newman.801@osu.edu}
\date{}

\begin{document}

\begin{abstract}
The integral Chow motive of an Enriques surface $S$ is determined by a torsion summand $\ft(S)$, which contributes the torsion in the cohomology of $S$. We study isomorphisms between and endomorphisms of motives of the form $\ft(S)$. This is closely related to the integral Hodge conjecture on products of two Enriques surfaces.
\end{abstract}

\maketitle

\tableofcontents

\section*{Introduction}

\noindent
The structure of the category of Chow motives with $\Z$-coefficients is extremely rich and displays phenomena not present in the more frequently studied $\Q$-linear category.
One such phenomenon, which has been known for a while, is the existence of nontrivial torsion objects, that is, objects $M$ for which $(\End(M),+)$ is a torsion Abelian group.
The first examples were discovered by Gorchinskiy--Orlov (\cite[Proposition 2.3]{phantom}), who showed that for a smooth projective surface $S$ over $\C$ satisfying $p=q_g=0$ as well as $\CH_0(S)\cong\Z$ (the conclusion of Bloch's conjecture (\cite{bloch})), there is a decomposition of the Chow motive with $\Z$-coefficients
\[
\fh(S)
=
\mbf1\oplus\mbf1(-1)^{\rho(S)}\oplus\mbf1(-2)
\oplus
\ft(S)
\]
for some torsion Chow motive $\ft(S)$, where $\rho(S)$ is the Picard number of $S$.
Moreover, using Merkurjev--Suslin's theorem (\cite{ms}), they were able to compute that the torsion order of $(\End(\ft(S)),+)$ is equal to that of $\Pic(S)_\tors$ (see also \cite{kahn}).

There now arise several basic questions about the structure of these motives $\ft(S)$ and their relationships to each other; for example:
\begin{enumerate}
\item
\label{item:qDEC}
When is $\ft(S)$ decomposable or indecomposable?
\item
\label{item:qISO}
When are $\ft(S)$ and $\ft(S')$ isomorphic?
\item
\label{item:qHOM}
Can we compute the Abelian group $\Hom(\ft(S),\ft(S'))$?
\end{enumerate}
These questions were, to some extent, addressed by Vishik in the paper \cite{vishik}, which contains a detailed study of torsion motives and numerous interesting results about them.
In particular, Vishik showed that $p$-torsion direct summands of the motives of surfaces are classified, up to isomorphism, by Rost cycle modules of a specific form (\cite[Theorem 4.13]{vishik}), and used this to obtain information about the endomorphism rings of such motives (\cite[Proposition 4.24 and Remark 4.25]{vishik}).
But it remained unclear how to answer the questions (a)--(c) in any specific concrete setting (aside from trivial cases, such as when $\Pic(S)_\tors\ncong\Pic(S')_\tors$).
The \tit{raison d'\^etre} of this paper is to remedy this situation and bring further attention to this interesting class of motives.

An ``Enriques surface'' over a field $k$ is a minimal smooth projective surface $S$ over $k$ of Kodaira dimension $0$ and second Betti number $b_2(S)=10$.
These are known to satisfy the hypotheses of \cite[Proposition 2.3]{phantom} when $k=\C$, so $\ft(S)$ exists in this case.
In fact, as noticed in \cite[\S3]{sy}, the same argument produces a decomposition $\fh(S)_{\Z[1/p]}=\mbf1\oplus\mbf1(-1)^{10}\oplus\mbf1(-2)\oplus\ft(S)$, where $\fh(S)_{\Z[1/p]}$ denotes the Chow motive of $S$ with $\Z[1/p]$-coefficients, for any algebraically closed field $k$ of exponential characteristic $p$ (that is, $p\ceq1$ if $\charac(k)=0$, and otherwise $p\ceq\charac(k)$).
Enriques surfaces are natural to consider because, from the perspective of Kodaira dimension, they are the simplest surfaces admitting such a decomposition in which $\ft(S)$ is nontrivial.

\begin{results}
We give partial answers to the questions \ref{item:qDEC}--\ref{item:qHOM} above;
see \S\ref{sec:results} for detailed statements and an overview of their proofs.
Briefly, over any algebraically closed field of characteristic not $2$ other than the algebraic closure of a finite field, we
\begin{enumerate}
\item
show that $\ft(S)$ may be decomposable.
\item
show that $\ft(S)\cong\ft(S')$ for ``many'' pairs of non-isomorphic Enriques surfaces $(S,S')$, while $\ft(S)\ncong\ft(S')$ for ``many'' pairs $(S,S')$, even some having the same K3 cover.
\item
completely determine $\Hom(\ft(S),\ft(S'))$ for ``many'' $(S,S')$.
\end{enumerate}
Moreover, nearly all our examples of these phenomena are completely explicit.
\end{results}

\noindent
We now compare our results with some results in literature.
First, the fact that $\ft(S)$ may be decomposable when $k$ is of characteristic $0$ directly contradicts \cite[Proposition 4.21]{vishik}.
In Appendix \ref{appx:vishik}, we explain what goes wrong in the argument of \textit{loc.\ cit}.
In turn, the statements in \cite[\S7.3]{sy} are also wrong (see Remark \ref{rmk:sycomp} below): for such $S$, we have $\CH_0(S_{k(S)})=\Z/2\oplus\Z/2$ and $H_\mr{ur}^3(S\times S,\Q_\ell/\Z_\ell(2))=0$ for any prime $\ell\neq\charac(k)$ (answering \cite[footnote 1]{kahn}), and if $k=\C$, $S\times S$ satisfies the integral Hodge conjecture in degree $4$.
In fact, we have been unable to find a single example of an Enriques surface $S$ for which $\ft(S)$ is indecomposable, or equivalently if $k=\C$, for which $S\times S$ fails the integral Hodge conjecture in degree $4$.
Nevertheless, we believe that $\ft(S)$ should ``usually'' be indecomposable.

Second, if $k=\C$, that $\ft(S)\ncong\ft(S')$ implies $S\times S'$ fails the integral Hodge conjecture in degree $4$.
Such failure was first demonstrated by Diaz in \cite{diaz}, based on arguments of Colliot-Th\'el\`ene--Voisin (\cite{ct-v}) and Gabber (\cite[Appendice]{ct-g}; see also \cite{ct}), and in fact it is not hard to deduce the statement that $\ft(S)\ncong\ft(S')$ directly from Diaz's argument.
However, one of the aims of this paper is to find examples which are as explicit and numerous as possible, and we succeed in extending Diaz's argument to apply in more situations.
This relies on a computation of the Brauer pullback $\Br(S)\to\Br(X)$, where $X\to S$ is the K3 cover, and to this end, we give a complete description of this map (Corollary \ref{cor:enriques-brauer-pullback}) for all $S$ in a certain $2$-dimensional family of Enriques surfaces, namely those defined by a Lieberman involution (see \eqref{thing:lieberman}).

\begin{acknowledgements}
We are very grateful to Alexander Vishik for discussing the material of \cite{vishik} with us and for providing feedback on an early draft of this paper.
We also thank Igor Dolgachev and Daniel Huybrechts for helpful conversations.
This material is based upon work supported by the National Science Foundation under
Grant No.\ DMS-2231565.
\end{acknowledgements}

\begin{ai}
LLMs were used to find typos, check for mathematical accuracy, search for references, and to better understand some of the background material.
The initial draft of this paper was generated entirely by the authors.
\end{ai}

\begin{notation}
Throughout this paper, we will observe the following conventions.
\begin{itemize}
\item
We denote by $k$ an algebraically closed field and $p$ the exponential characteristic of $k$.
\item
If $E$ is an elliptic curve over $k$ and $n$ an integer not divisible by $\charac(k)$, we will very often identify the $k$-group scheme $E[n]$ with its set of $k$-points.
\item
All sheaves are \'etale sheaves, and $H^*$ and $\pi_1$ always stand for \'etale cohomology and the \'etale fundamental group, respectively.
(Of course, for a field $K$, we will sometimes find it useful to identify $\pi_1(\Spec(K))$ with the absolute Galois group $\Gal(\ol K/K)$ and $H^*(\Spec(K))$ with the group cohomology of $\Gal(\ol K/K)$.
\item
For a scheme $X$, we write $\Br(X)\ceq H^2(X,\mbb G_\mr{m})_\tors$ for the cohomological Brauer group of $X$.
\item
We will, in certain situations, be careful to distinguish between Tate twists, mainly cohomology with $\Z/2$-coefficients versus $\mu_2$-coefficients.
Since $\Z/2$ and $\mu_2$ are uniquely isomorphic over a field of characteristic not $2$, the difference is often immaterial.
However, one situation in which it is necessary (to obtain the correct statement of Theorem \ref{thm:enriques-thm-2}) for us to have a precise understanding of the relationship between $\Z/2$- and $\mu_2$-cohomology is explained in \eqref{thing:ec-coh}.
\end{itemize}
\end{notation}

\section{Statements of main results}
\label{sec:results}

\noindent
In this section, $k$ is an algebraically closed field of characteristic not $2$.
Our main theorems deal with the following class of Enriques surfaces.

\begin{thing}
\label{thing:lieberman}
Let $\mf L(k)$ denote the set of isomorphism classes of tuples $(E,P,F,Q)$ in which $E$ and $F$ are elliptic curves over $k$ and $P\in E[2]\setminus\{O\}$ and $Q\in F[2]\setminus\{O\}$.
To ease notation, if we write something like ``$\circ\in\mf L$'', then $(E_\circ,P_\circ,F_\circ,Q_\circ)$ will denote the tuple corresponding to $\circ$.
Now fix $\circ\in\mf L(k)$.
Recall that the Kummer surface $\Kum(E_\circ\times F_\circ)$ is the minimal resolution of the quotient of $(E_\circ\times F_\circ)/\iota$, where $\iota$ multiplication by $-1$; alternatively, $\iota$ lifts uniquely to an involution of the blowup $\mr{Bl}_Z(E_\circ\times F_\circ)$ along the $\iota$-fixed locus $Z\ceq E_\circ[2]\times F_\circ[2]$, and $\Kum(E_\circ\times F_\circ)$ is the quotient $\mr{Bl}_Z(E_\circ\times F_\circ)/\iota$.
The involution of $E_\circ\times F_\circ$ defined by $(x,y)\mapsto(x+P_\circ,-y+Q_\circ)$ descends to a fixed-point--free involution $\sigma_\circ$ of $\Kum(E_\circ\times F_\circ)$, a ``Lieberman involution'', and we denote by $S_\circ\ceq\Kum(E_\circ\times F_\circ)/\sigma_\circ$ the corresponding Enriques surface.
We will sometimes say that such an Enriques surface is of ``Lieberman type''.
\end{thing}

\begin{thm}
\label{thm:enriques-thm-1}
Let $k$ be an algebraical{}ly closed field of characteristic not $2$.
\begin{enumerate}
\item\label{item:et1ISO}
Let $\circ,\bullet\in\mf L(k)$, and assume there exist isogenies $E_\circ\to E_\bullet$ and $F_\circ\to F_\bullet$ of odd degree sending $P_\circ$ to $P_\bullet$ and $Q_\circ$ to $Q_\bullet$, respectively.
Then $\ft(S_\circ)\cong\ft(S_\bullet)$.
\item\label{item:et1DECOMP}
Let $E_\circ$ be an el{}liptic curve over $k$ such that $\End(E_\circ)\cong\mc O_L$ for some quadratic imaginary number field $L$ in which $2$ splits.
Then there exists $P_\circ\in E_\circ[2]\setminus\{O\}$ such that $\ft(S_\circ)$ is decomposable for any $\circ\in\mf L(k)$ of the form $\circ=(E_\circ,P_\circ,F_\circ,Q_\circ)$.
\end{enumerate}
\end{thm}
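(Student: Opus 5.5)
For part (a), the plan is to build a correspondence $S_\circ \vdash S_\bullet$ out of the isogenies and show it induces an isomorphism on the torsion summands. Let $f\colon E_\circ\to E_\bullet$ and $g\colon F_\circ\to F_\bullet$ be the given odd-degree isogenies.

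First I verify that $f\times g$ is compatible with the relevant involutions.
\begin{itemize}
\item It commutes with $\iota$, since isogenies are homomorphisms.
\item It carries $E_\circ[2]\times F_\circ[2]$ bijectively onto $E_\bullet[2]\times F_\bullet[2]$, because the degrees are odd.
\item It intertwines the two maps $(x,y)\mapsto(x+P,-y+Q)$, because $f(P_\circ)=P_\bullet$ and $g(Q_\circ)=Q_\bullet$.
\end{itemize}
Hence $f\times g$ induces a finite map of K3 surfaces $\Kum(E_\circ\times F_\circ)\to\Kum(E_\bullet\times F_\bullet)$. This map is equivariant for the Lieberman involutions and descends to a finite surjective morphism $\phi\colon S_\circ\to S_\bullet$ of degree $\deg f\cdot\deg g$, which is odd.

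The composite $\phi_*\phi^*$ acts on $\fh(S_\bullet)$ as multiplication by $\deg\phi$. Projecting to the summand $\ft(S_\bullet)$ gives $\phi_*\phi^*=\deg\phi$ on $\ft(S_\bullet)$. The endomorphism ring of $\ft(S)$ is $2$-primary torsion: by Gorchinskiy--Orlov its exponent equals that of $\Pic(S)_\tors=\Z/2$, and with $\Z[1/p]$-coefficients the same holds since $p\ne2$. So multiplication by an odd integer is invertible on $\ft(S_\bullet)$, and $\ft(S_\bullet)$ is a direct summand of $\ft(S_\circ)$ via $\phi^*$ and $\phi_*$.

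To conclude that the complement vanishes, I use the symmetric argument with dual isogenies. The dual isogenies $\hat f$ and $\hat g$ also have odd degree. Since $\hat f\circ f=[\deg f]$ with $\deg f$ odd, $\hat f(P_\bullet)=\hat f(f(P_\circ))=P_\circ$, and likewise for $Q$. So we get $\psi\colon S_\bullet\to S_\circ$, and $\psi\circ\phi$ is induced by multiplication by odd integers on each factor. That map acts on $\ft(S_\circ)$ as an odd-degree self-map. It remains to show that this self-map is invertible on $\ft(S_\circ)$. Since $\End(\ft(S_\circ))$ is a finite $2$-group ring, I reduce this to checking the action on $\Pic(S_\circ)_\tors=\Z/2$ (equivalently on $H^3$ torsion). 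There $[n]\times[m]$ with $n,m$ odd pulls back the canonical class to itself, so the map is the identity modulo a nilpotent ideal and hence a unit. This reduction, detecting invertibility on $\ft$ via its action on the Picard torsion or Brauer realization, is the step I expect to need the most care.

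For part (b), I aim to produce a nontrivial idempotent in $\End(\ft(S_\circ))$. Since $2$ splits in $L$, write $2=\pi\bar\pi$ with $\pi\in\mc O_L=\End(E_\circ)$. Then $\ker\pi\subset E_\circ[2]$ has order $2$; I choose $P_\circ$ to be its generator. Next I check which endomorphisms are compatible with the involution.
\begin{itemize}
\item $\bar\pi$ has the other kernel, so $\bar\pi(P_\circ)\neq O$, and I expect $\bar\pi(P_\circ)=P_\circ$ after possibly replacing $\pi$ by a unit multiple. (The $2$-torsion splits as $\ker\pi\oplus\ker\bar\pi$, and $\bar\pi$ acts on $\ker\pi$ by the nonzero scalar.)
\item $\pi(P_\circ)=O$.
\end{itemize}
Consequently $\bar\pi\times\id$ descends to a self-correspondence $\beta$ of $S_\circ$, while $\pi\times\id$ does not descend compatibly with the involution; instead it relates $S_\circ$ to the Kummer-type quotient with $P=O$. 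The idea is to take $\beta$, of degree $2$, and show that on $\ft(S_\circ)$ the element $e=\beta^{\,n}$ (or $\beta$ corrected by the identity) is a nontrivial idempotent. In the $\Z/2$-algebra $\End(\ft)$, it suffices that $\beta$ is neither nilpotent nor unipotent. I expect to test this with two realizations: $\beta$ acts as $0$ on one (the $\Pic_\tors$ or $H^1$-type torsion, since its degree is even) and as the identity on another (the transcendental Brauer-type part, via the automorphism $\bar\pi/\pi$ structure). Computing these two actions, especially via the Brauer pullback $\Br(S)\to\Br(X)$, is the main obstacle, and is where the splitting of $2$ is essential.
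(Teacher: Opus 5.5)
Your outline follows the paper's strategy: an odd-degree map built from the isogenies for (a), and an even-degree self-map built from an endomorphism fixing $P_\circ$ for (b). However, the decisive step of (b) is missing, and the tool you propose for it is not the relevant one.

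Evenness of $\deg\beta$ only gives $\beta_*\beta^*=0$ on $H^2(S_\circ,\Z_2)_{\tors}\cong\Pic(S_\circ)_{\tors}$. This is perfectly compatible with $\beta$ acting by $(0,0)$ on $(H^2_{\tors},H^3_{\tors})$, i.e.\ with $\beta$ being nilpotent. The Brauer pullback $\Br(S)\to\Br(X)$ cannot settle this: the paper uses it only for Theorem~\ref{thm:enriques-thm-2}, and (b) holds whether or not that pullback vanishes.

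What you need is that $\beta^*$ is \emph{nonzero} on $H^1(S_\circ,\Z/2)\cong H^2(S_\circ,\Z_2)_{\tors}$. In your setup, this is exactly where $\bar\pi(P_\circ)=P_\circ$ enters. The map $\bar\pi\times\id$ commutes with $\iota$ and with $(x,y)\mapsto(x+P_\circ,-y+Q_\circ)$, so $\beta$ lifts to an equivariant rational self-map of the K3 cover $X$. Hence, over a dense open subset, the pullback along $\beta$ of the $\Z/2$-torsor $X\to S_\circ$ is again this connected torsor. (The paper argues more generally that a disconnected pullback would force $\tilde S$ to dominate a K3 surface.)

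So $\beta^*$ is an isomorphism on $H^2_{\tors}$. Evenness of the degree then forces $\beta_*=0$ on $H^2_{\tors}$. The torsion Poincar\'e pairing makes $\beta_*$ on $H^3_{\tors}$ the adjoint of $\beta^*$ on $H^2_{\tors}$, hence an isomorphism. Thus $\beta_*$ acts by $(0,1)$. Together with $\id=(1,1)$ this gives $I(S_\circ,S_\circ)=\Z/2\oplus\Z/2$, and a nontrivial idempotent after lifting modulo the nilpotent kernel; this is Proposition~\ref{prop:enriques-rat-map}\ref{item:ermDECOMP}.

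Two further points need repair.

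First, writing $2=\pi\bar\pi$ with $\pi\in\cO_L$ presumes that the primes $\mathfrak p,\bar{\mathfrak p}$ above $2$ are principal. This fails e.g.\ for $L=\Q(\sqrt{-15})$, where the norm form $x^2+xy+4y^2$ never equals $2$. Instead, take $P_\circ$ generating $E_\circ[\mathfrak p]$ and, by the Chinese remainder theorem, any $\phi\in\bar{\mathfrak p}$ with $\phi\equiv1\bmod\mathfrak p$. Then $\deg\phi=N(\phi)$ is even and $\phi(P_\circ)=P_\circ$; this is the paper's choice of images $(1,0)$ in $\cO_L/2\cong\F_2\times\F_2$.

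Second, in (a), $f\times g$ does not in general induce a morphism of Kummer surfaces. We have $(f\times g)^{-1}(Z_\bullet)=Z_\circ+\ker(f\times g)$. When this kernel has nonzero $k$-points, the points of this set not fixed by $\iota$ give smooth points of $(E_\circ\times F_\circ)/\iota$ lying over nodes of $(E_\bullet\times F_\bullet)/\iota$. The lift to the minimal resolution is undefined there, since the preimage of the exceptional curve would be an isolated point. (Indeed, by Hurwitz and purity, a finite separable morphism between Enriques surfaces is \'etale, hence of degree $1$, since $\chi(\cO)=1$ multiplies by the degree in \'etale covers.) So you only get a rational map $S_\circ\dashrightarrow S_\bullet$, and $\phi_*\phi^*=\deg\phi$ must be used on a resolution $S_\circ\leftarrow\tilde S\to S_\bullet$.

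This is repairable: the blowup only adds Tate summands, and the resulting correction terms lie in the nilpotent ideal $J$ from the proof of Lemma~\ref{lem:action-on-coh}. It is cleaner, though, to argue as the paper does. Show that the graph of $\tilde S$ acts invertibly on $H^*(-,\Z_2)_{\tors}$, then invoke Lemma~\ref{lem:action-on-coh}\ref{item:aocIIIS} with $\ft(S)^{\mr t}\cong\ft(S)$. This also makes your dual-isogeny step unnecessary.
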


\begin{rmk}
In order for \ref{item:et1ISO} of Theorem \ref{thm:enriques-thm-1} to be interesting, we need to explain why $S_\circ$ and $S_\bullet$ can be chosen to be not isomorphic as varieties.
When $k=\C$, this follows from the general fact that for a K3 surface $X$, there are only finitely many Abelian varieties $A$ satisfying $X\cong\Kum(A)$ by \cite[Theorem 0.1(1)]{kummer-structures}.
Combining \tit{loc.\ cit.\ }with \cite[Theorem 2.19]{orlov-02} and a short argument, one can show that if $k=\C$, $E_\circ$ and $F_\circ$ are not isogenous, $\End(E_\circ)\cong\Z$,\footnote{
or, more generally, $\End(E_\circ)$ is isomorphic to an imaginary quadratic order of odd class number
}
and either $E_\circ\ncong E_\bullet$ or $E_\circ\ncong F_\bullet$, then $S_\circ\ncong S_\bullet$.
Over a general algebraically closed field of characteristic not $2$, one can use the facts that any Enriques surface admits at most 527 elliptic fibrations up to isomorphism (\cite[Theorem 5.4]{bg}) and $S_\circ$ admits two elliptic fibrations with respective half-fiber isomorphic to $E_\circ/P_\circ$ or $F_\circ/Q_\circ$.
\end{rmk}

\begin{rmk}
More can be said about the decomposition of \ref{thm:enriques-thm-1}\ref{item:et1DECOMP}; see part \ref{item:cDEC} of Lemma \ref{lem:c}.
In particular, it yields the first example of a nontrivial Chow motive $M$ over $\C$ with $\Z$-coefficients having trivial total Chow group, i.e.\ $\CH^i(M)=0$ for all $i$.
By contrast, a nontrivial Chow motive over $\C$ with $\mathbb Q$-coefficients has nontrivial total Chow group (\cite[Lemma 2.1]{gg}), and therefore any example of such a motive $M$ must be torsion.
\end{rmk}

\noindent
In order to state our second main theorem, we will need the following rather unwieldy notation and terminology.

\begin{thing}
\label{thing:legendre}
Given an elliptic curve $E$ over $k$ and $P\in E[2]\setminus\{O\}$, there exists $\lambda\in k\setminus\{0,1\}$ for which $E$ is isomorphic to the curve $E_\lambda$ given by the Weierstrass equation $y^2=x(x-1)(x-\lambda)$ and $P$, under this isomorphism, is sent to the point $P_\lambda\ceq(0,0)$ on $E_\lambda$ (see \cite[Ch.\ III, Proposition 1.7]{silverman}).
There exists exactly one other value with this property, namely $1/\lambda$.
So, for $\lambda,\mu\in k\setminus\{0,1\}$, we will denote by $S_{\lambda,\mu}$ the Enriques surface attached to $(E_\lambda,P_\lambda,E_\mu,P_\mu)\in\mf L(k)$ as in \eqref{thing:lieberman}.
\end{thing}

\begin{thing}
\label{thing:star}
Let $\lambda\in k\setminus\{0,1\}$, and let $P$ be the nonzero element in the image of $E_\lambda[2]\to(E_\lambda/P_\lambda)[2]$.
A direct calculation (using, for example, \cite[Ch.\ III, Example 4.5]{silverman}) gives $(E_\lambda/P_\lambda,P)\cong(E_{\lambda^*},P_{\lambda^*})$, where
$
\lambda^*
\ceq
(\sqrt\lambda+1)^2/(\sqrt\lambda-1)^2
$.
\end{thing}

\begin{defn}
\label{defn:suitable}
Given $\lambda,\mu,\nu\in k\setminus\{0,1\}$, let us say that $(\lambda,\mu,\nu)$ is \tit{suitable}\footnote{
by which we mean ``suitable for the proof of Corollary \ref{cor:galois-nonzero-application}''
}
if there exists a subfield $k_0$ of $k$ and a discrete valuation $v$ on $k_0$ with residue characteristic not $2$ such that $\lambda,\mu,\nu\in k_0$ and at least one of the fol{}lowing holds:
\begin{enumerate}
\item\label{item:gnaGGB}
$v(\lambda)=v(\lambda-1)=v(\mu)=v(\mu-1)=0$ and $v(\nu)\neq0$.
\item\label{item:gnaBB}
For some indexing $\{\lambda,\mu,\nu\}=\{\lambda_1,\lambda_2,\lambda_3\}$, we have
$v(\lambda_1-1)=0$, $v(\lambda_2)\neq0$, and $v(\lambda_3)\neq0$.
\item\label{item:gnaB}
For some indexing $\{\lambda,\mu,\nu\}=\{\lambda_1,\lambda_2,\lambda_3\}$, we have $v(\lambda)=v(\lambda-1)=v(\mu)=v(\mu-1)=0$, $v(\lambda_3)\neq0$, and moreover there is no isogeny $\phi\cln E_{\ol{\lambda_1}}\to E_{\ol{\lambda_2}}$ such that $\phi(P_{\ol{\lambda_1}})=0$ and $P_{\ol{\lambda_2}}\in\phi(E_{\ol{\lambda_1}}[2])$, where the bar denotes the image in a fixed algebraic closure of the residue field of $v$.
\end{enumerate}
Note that $(\lambda,\mu,\nu)$ is \tit{never} suitable if $k$ is the algebraic closure of a finite field.
\end{defn}

\begin{thm}
\label{thm:enriques-thm-2}
Let $k$ be an algebraical{}ly closed field of characteristic not $2$, and fix $\lambda,\mu,\lambda',\mu'\in k\setminus\{0,1\}$.
Assume that one of $(\lambda,\mu,\lambda'^*)$, $(\lambda,\mu,\mu'^*)$, $(\lambda',\mu',\lambda^*)$, or $(\lambda',\mu',\mu^*)$ is suitable.
Then $\ft(S_{\lambda,\mu})\ncong\ft(S_{\lambda',\mu'})$.
\end{thm}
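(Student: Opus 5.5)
The plan is to find an invariant of $\ft(S)$ that can be computed from the Brauer pullback along the K3 cover, in the spirit of Diaz's argument. Over a field of definition, any morphism $\ft(S_{\lambda,\mu})\to\ft(S_{\lambda',\mu'})$ is a torsion correspondence. If $\ft(S_{\lambda,\mu})\cong\ft(S_{\lambda',\mu'})$, then $S_{\lambda,\mu}\times S_{\lambda',\mu'}$ carries a codimension-$2$ cycle $\Gamma$ that induces an isomorphism on the torsion parts of cohomology. In particular it would induce an isomorphism
\[
H^3(S_{\lambda,\mu},\Z_2(2))_\tors\cong\Z/2\;\longrightarrow\;H^3(S_{\lambda',\mu'},\Z_2(2))_\tors,
\]
and dually on $\Br$ and $\Pic_\tors$. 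Via the Künneth decomposition, such a cycle gives a class in $H^4(S\times S',\mu_2^{\otimes2})$ of the form $\alpha\cup\beta+\dots$, with $\alpha\in H^1(S,\mu_2)$ and $\beta$ the nonzero Brauer class of $S'$. The first step is to show that this class must be algebraic. It then restricts to the generic point $\Spec k(S')$ as an unramified class, which must vanish: $\CH^2$ of a surface over a field maps to zero in $H^4$ modulo the image of algebraic classes. So the obstruction is a nonzero element of $H^3_{\mathrm{ur}}$, or concretely a symbol $\alpha\cup\beta_{k(S')}$ in $H^3(k(S'),\mu_2^{\otimes 2})$.

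The second step rewrites this obstruction using the K3 cover. Write $\pi\colon X'\to S'$ for the K3 cover, which is the étale double cover defined by $\alpha'\in H^1(S',\mu_2)$. Working over the function field $k(S)$, the class $\alpha_{k(S)}\cup\beta_{k(S')}$ lives in $H^3$ of $S'_{k(S)}$. It vanishes on the generic fibre exactly when $\beta$ becomes a norm, that is, when $\beta$ lies in the kernel of $\Br(S'_{K})\to\Br(S'_{K(\sqrt{\alpha})})$ for suitable twists. I would use Corollary~\ref{cor:enriques-brauer-pullback}, which describes $\Br(S')\to\Br(X')$ explicitly for Lieberman-type surfaces. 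The aim is to reduce the nonvanishing to the statement that the pullback of $\beta$ to the double cover of $S'$ given by the function $\alpha\in k(S)^\times/2$ stays nonzero.

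For Lieberman surfaces, $\alpha$ and $\beta$ are expressed through the elliptic curves. The double cover of $S_{\lambda,\mu}$ corresponds to the $2$-isogenies $E_\lambda\to E_\lambda/P_\lambda\cong E_{\lambda^*}$, which is why the starred parameters enter. This turns the nonvanishing into a Galois-cohomological statement about symbols in the variables $\lambda,\mu,\nu$ with $\nu\in\{\lambda'^*,\mu'^*\}$. That statement is presumably what Corollary~\ref{cor:galois-nonzero-application} proves, by taking residues along the valuation $v$ in the three cases of Definition~\ref{defn:suitable}:
\begin{itemize}
\item in case (i), $\nu$ has nonzero valuation while everything else is a unit, so a residue of the symbol survives;
\item in case (ii), two of the parameters have nonzero valuation, and an iterated residue computation is needed;
\item in case (iii), the residue lands in $H^2$ of the reduced surface, and its nonvanishing is exactly the no-isogeny condition on $E_{\ol{\lambda_1}},E_{\ol{\lambda_2}}$.
\end{itemize}
The symmetry among the four listed triples comes from swapping the roles of $S$ and $S'$ and of $E$ and $F$.

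I expect the main obstacle to be the first two steps together: proving rigorously that an isomorphism of $\ft$'s forces the specific mixed class $\alpha\cup\beta$ to be algebraic, and that this is detected by the function-field symbol with exactly the parameters $\lambda'^*,\mu'^*$. This needs the $\mu_2$ versus $\Z/2$ bookkeeping flagged in~\eqref{thing:ec-coh} and the precise Brauer pullback description. The residue computations afterwards should be routine.
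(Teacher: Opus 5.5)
There is a genuine gap, and it is in your second step. Write $S=S_{\lambda,\mu}$ and $S'=S_{\lambda',\mu'}$. Let $\alpha,\alpha'$ be the generators of $H^1(-,\mu_2)$ and $\beta,\beta'$ non-algebraic classes in $H^2(-,\mu_2)$.

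You identify ``$\beta'$ is a norm'' with ``$\beta'$ dies on the double cover cut out by $\alpha$''. These are different conditions. For $L=F(\sqrt\alpha)$, the long exact sequence of $0\to\Z/2\to\mathrm{Ind}_L^F\Z/2\to\Z/2\to0$ shows that the kernel of $\alpha\cup-$ on $H^2(F,\mu_2)$ is $\mathrm{cor}_{L/F}H^2(L,\mu_2)$. The kernel of restriction is only $\alpha\cup H^1(F,\mu_2)$, which is contained in it. So survival of $\beta'$ over $L$ is necessary, but not sufficient, for $\alpha\cup\beta'\neq0$.

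Worse, this survival holds for \emph{every} pair. The class $\beta'$ never dies on $S'_{k(X)}$, where $X$ is the K3 cover of $S$: specialize at a $k$-point of $X$. Indeed $\alpha\cup\beta'\neq0$ in $H^3(S'_{k(S)},\mu_2^{\otimes2})$ always, by K\"unneth.

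What Proposition~\ref{prop:ur} needs is non-vanishing at the generic point of $S\times S'$, and there the symbol genuinely can vanish. If $(0,1)\in I(S,S')$, then Proposition~\ref{prop:ur} forces $\alpha\boxtimes\beta'$ to vanish in $H^3(k(S\times S'),\mu_2^{\otimes2})$. This happens, for example, for $S=S'$ with $\ft(S)$ decomposable (Theorem~\ref{thm:enriques-thm-1}\ref{item:et1DECOMP}), or when there is an even-degree rational map (Proposition~\ref{prop:enriques-rat-map}\ref{item:ermDECOMP}). A criterion satisfied by all pairs, in which suitability never enters, cannot detect non-isomorphism.

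There is a second, independent problem. Modulo the algebraic K\"unneth part, the class of a correspondence acting as $(1,1)$ is the Bockstein of $\alpha\boxtimes\beta'+\beta\boxtimes\alpha'$. The two summands govern the actions on $H^3_\tors$ and $H^2_\tors$ respectively. Even a correct proof that the single symbol $\alpha\boxtimes\beta'$ survives at the generic point would only exclude $(0,1)$ from $I(S,S')$. That is compatible with $I(S,S')=\{(0,0),(1,1)\}$, i.e.\ with $\ft(S)\cong\ft(S')$.

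The missing idea is to leave $S\times S'$ and test against an elliptic curve. This both isolates one summand and explains where the starred parameter of the \emph{other} surface comes from.

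\begin{enumerate}
\item The half-fiber $f\colon E_{\lambda'^*}\cong E_{\lambda'}/P_{\lambda'}\hookrightarrow S'$ satisfies $f^*\alpha'=\alpha_{P_{\lambda'^*}}$. So an isomorphism $\ft(S)\cong\ft(S')$ followed by $f^*$ gives a morphism $\ft(S)\to\fh(E_{\lambda'^*})$ sending $\alpha$ to $\alpha_{P_{\lambda'^*}}$.
\item By \eqref{eq:kp0}, this makes $\gamma\otimes\alpha_{P_{\lambda'^*}}$ algebraic on $S\times E_{\lambda'^*}$, where $\gamma=\ol\partial\beta$ generates $H^3(S,\mu_2)$.
\item Since $\partial\alpha_{P_{\lambda'^*}}=0$, the Leibniz rule gives $\gamma\otimes\alpha_{P_{\lambda'^*}}=\ol\partial(\beta\otimes\alpha_{P_{\lambda'^*}})$.
\item Proposition~\ref{prop:ur} then forces $\beta\otimes\alpha_{P_{\lambda'^*}}$ to vanish in $H^3(k(S\times E_{\lambda'^*}),\mu_2^{\otimes2})$. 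This contradicts Corollary~\ref{cor:galois-nonzero-application}.
\end{enumerate}

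In your language: restrict $\alpha\boxtimes\beta'+\beta\boxtimes\alpha'$ to $S\times E_{\lambda'^*}$. The first summand dies at the generic point, because $f^*\beta'$ is a multiple of the point class. The second becomes $\beta\boxtimes\alpha_{P_{\lambda'^*}}$.

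The Brauer pullback of Proposition~\ref{prop:enriques-brauer-pullback} enters only inside that corollary. There it rewrites $\beta$ on $E_\lambda\times E_\mu$ as $\alpha_{P_\lambda^\vee}\otimes\alpha_{P_\mu}$ before taking residues. It plays no role through a norm condition on a double cover.
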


\begin{rmk}
It is easy to use Theorem \ref{thm:enriques-thm-2} to find Enriques surfaces $S$ and $S'$ having the same K3 cover but satisfying $\ft(S)\ncong\ft(S')$; see Example \ref{eg:same-cover}.
\end{rmk}

\begin{rmk}
In several cases, we are able to exactly compute $\Hom(\ft(S),\ft(S'))$ (Examples \ref{eg:c-possibilities} and \ref{eg:c-possibilities-2}).
In fact, we show that four out of the five possibilities of this group (Lemma \ref{lem:c}) do occur; the missing possibility occurs if and only if some $\ft(S)$ is indecomposable.
\end{rmk}

\begin{ontheproofs}
After some preliminaries in \S\S\ref{sec:brauer}--\ref{sec:surface-motives}, the proofs of Theorems \ref{thm:enriques-thm-1} and \ref{thm:enriques-thm-2} are completed in \S\ref{sec:enriques-motives}.
The proofs are quite different in nature:
The former requires us to produce algebraic cycles, while the latter requires us to show that a certain cohomology class is not algebraic.

The proof of Theorem \ref{thm:enriques-thm-1} rests on a finiteness result, in the spirit of Merkurjev--Suslin (\cite[\S18]{ms}), for torsion in $\CH^2$ (Proposition \ref{prop:bloch-kato}).
This result puts strong restrictions on morphisms and endomorphisms of torsion summands of the motives of smooth projective surfaces (\eqref{thing:tis-hom}--\eqref{lem:action-on-coh}).
In particular, we observe that Theorem \ref{thm:enriques-thm-1} is implied by the algebraicity of certain $2$-torsion cohomology classes in $H^4(S_\circ\times S_\bullet,\Z_2(2))$ (Lemma \ref{lem:c}).
A simple geometric argument shows that rational maps of odd or even degree provide the necessary algebraic cycles (Proposition \ref{prop:enriques-rat-map}).
Finally, we construct the rational maps in \S\ref{sec:enriques-motives}.

The idea of the proof of Theorem \ref{thm:enriques-thm-2} is to study $\Hom(\ft(S),\fh(E))$ for Enriques surfaces $S$ and elliptic curves $E$.
More precisely, since $H^1(\ft(S),\mu_2)\cong\Z/2$, we get a map $\Hom(\ft(S),\fh(E))\to H^1(E,\mu_2)$, and the image of this map is an invariant of $\ft(S)$.
Assuming, for example, that $(\lambda,\mu,\lambda'^*)$ is suitable, this reduces the claim to the non-algebraicity of a certain $2$-torsion class in $H^4(S_{\lambda,\mu}\times E_{\lambda'^*},\Z_2(2))$.
By the work of Colliot-Th\'el\`ene--Voisin (\cite[Th\'eor\`eme 3.7]{ct-v}; see Proposition \ref{prop:ur}), it suffices to show that a certain class in $H^3(S_{\lambda,\mu}\times E_{\lambda'^*},\mu_2^{\otimes2})$ is nonvanishing in Galois cohomology.
For this, we use modified version of a degeneration argument of Gabber (\cite[Appendice]{ct-g}, \cite[Theorem 3.3]{auel-suresh}; see Proposition \ref{prop:galois-nonzero}), supplemented by the explicit calculation of the Brauer pullback for Enriques surfaces of Lieberman type (Proposition \ref{prop:enriques-brauer-pullback}).
We remark that the integral Hodge conjecture for varieties of the form $S\times E$ was first shown to be false by Benoist--Ottem in \cite[Theorem 0.1]{bo} using a Hilbert-scheme argument.
\end{ontheproofs}

\begin{rmk}
Aside from finding an example when $\ft(S)$ is indecomposable, the other major shortcoming of our results is that we are unable to find a single pair of Enriques surfaces $S$ and $S'$ over the algebraic closure of a finite field such that $\ft(S)\ncong\ft(S')$, because the argument of Proposition \ref{prop:galois-nonzero} relies on passing to the residue field of a (nontrivial) valuation on the base field.
What is more, Schoen proved that over such a field, the integral Tate conjecture for $1$-cycles is a consequence of the usual Tate conjecture for divisors on surfaces (\cite{schoen}).
Thus we expect that the map $\Hom(\ft(S),\fh(E))\to H^1(E,\mu_2)$ is surjective for all $S$ and $E$, rendering the motivic invariants $\Img(\Hom(\ft(S),\fh(E)\to H^1(E,\mu_2))$ completely useless.
\end{rmk}

\section{Brauer classes on Enriques surfaces of Lieberman type}
\label{sec:brauer}

\noindent
In this section, $k$ is an algebraically closed field of characteristic not $2$.
If $S$ is an Enriques surface over $k$, then $\Br(S)\cong\Z/2$ (\cite[Theorem 1.2.17]{enriques-I}).
In \cite{beauville}, Beauville gives an explicit lattice-theoretic description of the locus of complex Enriques surfaces $S$ for which the map $\Br(S)\to\Br(X)$ vanishes, where $X\to S$ is the K3 cover.
In subsequent work, such as \cite{gs}, explicit families of Enriques surfaces with vanishing Brauer pullback were exhibited.
The purpose of this section is to compute $\Br(S)\to\Br(X)$ for any Enriques surface $S$ of Lieberman type over $k$.
Note that the Enriques surfaces in \cite{gs} are also constructed using products of elliptic curves; we have not tried to determine the relationship between them and ones of Lieberman type.

\begin{thing}
\label{thing:enriques-brauer}
Let $S$ be an Enriques surface over $k$.
Then, from the $\mu_2$-Kummer sequence and the fact that $\Br(S)\cong\Z/2$, we get a surjective map $H^2(S,\mu_2)\onto\Br(S)$ whose kernel is the group of algebraic classes.
A comparison of the $\mu_2$- and $\mu_4$-Kummer sequences shows that a class in $H^2(S,\mu_2)$ is algebraic if and only if it lifts to a class in $H^2(S,\mu_4)$.
\end{thing}

\begin{thing}
\label{thing:blowup}
First, a reminder on the cohomology of a blowup.
Let $\iota\cln Z\into X$ be a codimension-$c$ closed immersion of smooth $k$-varieties, let $\pi\cln\mr{Bl}_Z(X)\to X$ denote the blowup, and let $\iota'\cln Z'\into\mr{Bl}_Z(X)$ denote the inclusion of the exceptional divisor.
Then the map
\[
\begin{pmatrix}
\pi^*&\iota'_*\\
0&(\pi|_{Z'})_*
\end{pmatrix}
\cln
H^i(X,\mu_2^{\otimes j})
\oplus
H^{i-2}(Z',\mu_2^{\otimes j-1})
\to
H^i(\mr{Bl}_Z(X),\mu_2^{\otimes j})
\oplus
H^{i-2c}(Z,\mu_2^{\otimes j-c})
\]
is an isomorphism (\cite[Expos\'e VI, Corollaire 8.3]{sga5}).
This is the ``blowup formula''.
\end{thing}

\begin{thing}
\label{thing:lieberman-setup}
We will use the notation and terminology of \eqref{thing:lieberman}.
Fix $(E,P,F,Q)\in\mf L(k)$.
Set $Z\ceq E[2]\times F[2]$ and $Y^\circ\ceq(E\times F)\setminus Z$, and let $Y$ denote the blowup of $E\times F$ along $Z$.
Set $X\ceq\Kum(E\times F)$, and let $X\to S$ be the quotient by the Lieberman involution attached to $(E,P,F,Q)$.
We get a commutative diagram
\[
\begin{tikzcd}
[column sep=small,row sep=small]
H^2(S,\mu_2)
\arrow[r]\arrow[d]&
H^2(X,\mu_2)
\arrow[r]\arrow[d]&
H^2(Y,\mu_2)
\arrow[r]\arrow[d]&
H^2(Y^\circ,\mu_2)&
H^2(E\times F,\mu_2)
\arrow[l,"\sim"']\arrow[d]
\\
\Br(S)
\arrow[r]&
\Br(X)
\arrow[r]&
\Br(Y)
&&
\Br(E\times F);
\arrow[ll,"\sim"']
\end{tikzcd}
\]
the last arrow on the top is an isomorphism by the Gysin sequence (\cite[Ch.\ VI, Remark 5.4(b)]{milne-book}) since $\codim_{E\times F}(Z)=2$, and the last arrow on the bottom can be checked to be an isomorphism using the Kummer sequence and the blowup formula \eqref{thing:blowup}.
\end{thing}

\begin{thing}
\label{thing:ec-coh}
Let $E$ be an elliptic curve over $k$.
Recall that $H^1(E,\Z/2)$ is isomorphic to the group of \'etale $\Z/2$-torsors over $E$.
\begin{enumerate}
\item
\label{item:ecISO}
We will frequently make use of the isomorphism $\Hom(E[2],\Z/2)\iso H^1(E,\Z/2)$, which we will denote $\theta\mapsto\alpha_\theta$, characterized as follows:
For a surjection $\theta\cln E[2]\onto\Z/2$, $\alpha_\theta$ is given by the isogeny $E/{\Ker(\theta)}\to E$ dual to the quotient isogeny.
In turn, using the perfect Poincar\'e pairing $H^1(E,\Z/2)\times H^1(E,\mu_2)\to\Z/2$, we get an isomorphism $E[2]\iso H^1(E,\mu_2)$, which we will also denote by $P\mapsto\alpha_P$.
\item
\label{item:ec2}
Given $P\in E[2]\setminus\{O\}$, there is a unique homomorphism $P^\vee\cln E[2]\iso\Z/2$ with kernel $\{O,P\}$.
Using that the pairing on $H^1(E,\mu_2)$ is alternating, one checks that the isomorphism $H^1(E,\Z/2)\iso H^1(E,\mu_2)$ induced by $\Z/2\iso\mu_2$ sends $\alpha_{P^\vee}$ to $\alpha_P$ for each $P\in E[2]\setminus\{O\}$.
\end{enumerate}
\end{thing}

\begin{prop}
\label{prop:enriques-brauer-pullback}
In the setting of \tn(\ref{thing:lieberman-setup}\tn), let $\beta\in H^2(S,\mu_2)$ be any non-algebraic class.
Then the image of $\beta$ under
\[
H^2(S,\mu_2)
\to
H^2(E\times F,\mu_2)
\to
H^1(E,\Z/2)\otimes H^1(F,\mu_2)
\]
equals $\alpha_{P^\vee}\otimes\alpha_Q$ in the notation of \ref{thing:ec-coh}\ref{item:ecISO}.
\end{prop}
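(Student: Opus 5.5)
The plan is to pass to the $(\Z/2)^2$-cover of the complement of the exceptional curves, bound the possible images using invariance and equivariance, and then prove nonvanishing separately. Write $\tau(x,y)=(x+P,-y+Q)$, so that $G\ceq\gen{\iota,\tau}\cong(\Z/2)^2$ acts on $Y^\circ$. The composite $Y^\circ\to X\to S$ identifies $S^\circ\ceq S\setminus(\text{image of the 16 exceptional curves})$ with $Y^\circ/G$, and this quotient map is a finite étale Galois $G$-cover. The map in the statement therefore factors as
\[
H^2(S,\mu_2)\to H^2(S^\circ,\mu_2)\to H^2(Y^\circ,\mu_2)^G\cong H^2(E\times F,\mu_2)^G,
\]
followed by Künneth projection onto $H^1(E,\Z/2)\otimes H^1(F,\mu_2)$. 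The argument then proceeds in three steps.

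\emph{Step 1: the image is independent of $\beta$.} Non-algebraic classes differ by algebraic ones, by (\ref{thing:enriques-brauer}), so it suffices to show that algebraic classes map to $0$. For this I work integrally. The pullback of $\Pic(S)$ lands in $\NS(E\times F)^G$, and
\[
\NS(E\times F)=\Z\oplus\Z\oplus\Hom(E,F).
\]
On the $\Hom(E,F)$ summand, $\tau^*$ acts by $-1$: translations act trivially, and $-1$ on $F$ negates homomorphisms. Since $\Hom(E,F)$ is torsion-free, $G$-invariant classes have zero $\Hom(E,F)$-component. Reducing the cycle class mod $2$, which is compatible with Künneth, shows that algebraic classes on $S$ project to $0$ in $H^1(E,\Z/2)\otimes H^1(F,\mu_2)$.

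\emph{Step 2: the image lies in $\{0,\alpha_{P^\vee}\otimes\alpha_Q\}$.} Mod $2$, $G$ acts trivially on $H^1(E)\oplus H^1(F)$, so $G$-invariance alone gives nothing here. I would instead use that a class in the image of $H^2(S^\circ,\mu_2)$ must survive the Hochschild--Serre spectral sequence for $Y^\circ\to S^\circ$. A class $a\in H^1(E,\Z/2)$, viewed as a torsor $E'\to E$, comes from $S^\circ$ (or more precisely acquires a $G$-linearization) exactly when the translation by $P$ and the inversion lift to an action of $(\Z/2)^2$ on $E'$. I expect this to give the following criterion for $a=\alpha_\theta$: the obstruction $d_2$ in $H^2(G,\Z/2)$ is the class determined by $\theta(P)$. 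The symmetric statement holds for $b\in H^1(F,\mu_2)$ with respect to $Q$ and the other generator $\iota\tau$. Writing $d_2(a\otimes b)$ via the Leibniz rule, the condition $d_2=0$ on a Künneth class $\sum a_i\otimes b_i$ should single out the line spanned by $\alpha_{P^\vee}\otimes\alpha_Q$. Here I use (\ref{thing:ec-coh})\ref{item:ec2} to translate between $\Z/2$ and $\mu_2$.

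\emph{Step 3: nonvanishing.} This is the main obstacle, since by Beauville the Brauer class of $S$ can die on the K3 cover, so nonvanishing cannot come from $\Br(X)$. Instead I would use the elliptic fibration $S\to F/\gen{-1,t_Q}\cong\mbb P^1$ induced by the second factor. Its generic fiber $S_\eta$ becomes $E_{k(F)}$ after the $G$-extension $k(F)/k(\mbb P^1)$. I would use the known injectivity of $\Br(S)\to\Br(S_\eta)$ for an Enriques surface. Then I would compare with the restriction of a Künneth class $a\otimes b$ to $E\times\Spec k(F)$, which is $a\cup b$ with $b\in k(F)^\times/2$. If the projection of $\beta$ were $0$, then after base change to $k(F)$ the class $\beta$ would be algebraic. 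The remaining task is to show this contradicts injectivity on $S_\eta$. To handle the descent along the degree-$4$ extension, I would first try a transfer/corestriction argument through the intermediate quotient by $\gen{\tau}$, where the base change has degree $2$. If that fails, the fallback is to compare $\dim_{\Z/2}H^2(S,\mu_2)=12$ with the rank of $\NS(S)/2$ together with Step 1, which would force some non-algebraic class to have nonzero $H^1\otimes H^1$-component.
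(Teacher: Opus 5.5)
Step 1 is correct, and it is a cleaner pointwise replacement for the paper's specialization to non-isogenous curves. Step 2 can also be carried out. The genuine gap is Step 3, which is where all the content of the nonvanishing lies.

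\textbf{Why Step 3 cannot work.} You aim to contradict ``$\beta$ becomes algebraic on $E_{k(F)}$''. But the Brauer class of $\beta$ on $E_{k(F)}$ is restricted from $Y^\circ$, and $\Br(Y^\circ)=\Br(E\times F)$ injects into $\Br(E_{k(F)})$. So what you would actually be proving is that $\Br(S)\to\Br(E\times F)$ is nonzero, equivalently that $\Br(S)\to\Br(X)$ is nonzero. That is precisely what you yourself noted fails on Beauville's locus.

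For instance, take $F=E/\gen{P}$ and let $Q$ be the image of a point of $E[2]\setminus\{O,P\}$. The quotient isogeny $\phi$ satisfies $\phi(P)=0$ and $Q\in\phi(E[2])$, so $\alpha_{P^\vee}\otimes\alpha_Q$ is algebraic. The statement you are proving then says the image of $\beta$ on $E\times F$ is algebraic (the $(2,0)$ and $(0,2)$ parts always are). Hence $\beta$ \emph{does} become algebraic over $k(F)$, even though its projection is nonzero. No Brauer-group argument can distinguish ``projection zero'' from ``projection nonzero but algebraic''.

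The other two ideas do not help either:
\begin{itemize}
\item The corestriction through the degree-$2$ subextension yields nothing, since $\mathrm{cor}\circ\mathrm{res}=2$ kills a $2$-torsion class.
\item The fallback count ($\dim H^2(S,\mu_2)=12$ versus $\dim \NS(S)/2=11$), combined with Step 1, only shows that the projection has rank at most $1$. That is an upper bound, not the lower bound you need.
\end{itemize}

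\textbf{How the paper gets the lower bound.} It argues by deformation over $T=Y_1(4)\times Y_1(4)$. There the projection is a global section of a locally constant sheaf on a connected base. It is nonzero at one complex point where even $\Br(S)\to\Br(X)$ is nonzero (an imported existence result). Hence it is nonzero everywhere, including on Beauville's locus.

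\textbf{A cheaper fix within your own set-up.} Your Hochschild--Serre framework closes the gap directly.
\begin{itemize}
\item Since $E_\infty^{0,1}=\ker(d_2\colon E_2^{0,1}\to E_2^{2,0})$, the degree-$1$ computation behind your Step 2 shows that $\mathrm{pr}_E^*\alpha_{P^\vee}$ and $\mathrm{pr}_F^*\alpha_Q$ each descend to $S^\circ$.
\item Concretely, take the torsors $E'\to E$ and $F'\to F$ dual to $E\to E/\gen{P}$ and $F\to F/\gen{Q}$. The lifts $(-1,-1)$ and $(t_{P'},\,y\mapsto -y+Q)$ on $E'\times F$, respectively $(-1,-1)$ and $(t_P,\,y'\mapsto -y'+Q')$ on $E\times F'$, are commuting involutions. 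They also commute with the deck transformations, because $2P'=0$ and $2Q'=0$.
\item Hence $\alpha_{P^\vee}\otimes\alpha_Q$ is the pullback of a cup product in $H^2(S^\circ,\mu_2)$.
\item $S\setminus S^\circ$ is a disjoint union of eight smooth rational curves, so the Gysin sequence makes $H^2(S,\mu_2)\to H^2(S^\circ,\mu_2)$ surjective. Thus some $\gamma\in H^2(S,\mu_2)$ projects to $\alpha_{P^\vee}\otimes\alpha_Q\neq 0$.
\item By Step 1, $\gamma$ is non-algebraic. Every non-algebraic $\beta$ differs from $\gamma$ by an algebraic class, which projects to $0$.
\end{itemize}
This even makes the $H^2$-level part of your Step 2 superfluous.
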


\noindent
Before giving the proof of Proposition \ref{prop:enriques-brauer-pullback}, we provide some reminders on \'etale local systems.

\begin{thing}
\label{thing:lcc1}
Let $T$ be a scheme.
An \'etale sheaf on $T$ is called \tit{lcc} if it is locally constant and has finite stalks.
For such a sheaf $\mathscr F$, the stalk $\mathscr F_{\ol t}$ at a geometric point $\ol t$ has a ``monodromy'' action of $\pi_1(T,\ol t)$ which is functorial in $(T,\ol t)$.
If $T$ is connected, $\mathscr F\mapsto\mathscr F_{\ol t}$ defines an equivalence of categories (\cite[Section \href{https://stacks.math.columbia.edu/tag/0DV4}{\texttt{0DV4}}]{stacks})
\[
(\text{lcc sheaves of sets on $T$})
\iso
(
\text{continuous actions of $\pi_1(T,\ol t)$ on finite sets}
).
\]
Under this equivalence, constant sheaves correspond to $\pi_1$-sets with the trivial action.
More generally, the image of the (injective) map $\Gamma(T,\mathscr F)\to\mathscr F_{\ol t}$ is the set of $\pi_1$-invariants.
If $T=T_1\times_\Sigma T_2$ and $\mathscr F_i$ is an lcc Abelian sheaf on $T_i$, we denote by $\mathscr F_1\boxtimes\mathscr F_2$ the ``external product'' sheaf $\mr{pr}_1^{-1}\mathscr F_1\otimes\mr{pr}_2^{-1}\mathscr F_2$ on $T$, which is again lcc.
If $\ol t=(\ol t_1,\ol t_2)$, then $(\mathscr F_1\boxtimes\mathscr F_2)_{\ol t}\cong\mathscr F_{1,\ol t_1}\otimes\mathscr F_{2,\ol t_2}$.
\end{thing}

\begin{thing}
\label{thing:rif*}
Let $T$ be a scheme.
For a smooth proper morphism $f\cln\mc X\to T$ and an lcc Abelian sheaf $\mathscr F$ on $\mc X$ whose stalks have order invertible on $T$, we will write $\mathscr H^i(\mc X/T,\mathscr F)\ceq R^if_*\mathscr F$.
By the theorems on smooth and proper base change (\cite[Ch.\ VI, Corollary 4.2]{milne-book}), this is an lcc Abelian sheaf on $T$, and at a geometric point $\ol t$, its stalk $\mathscr H^i(\mc X/T,\mathscr F)_{\ol t}$ is naturally isomorphic to $H^i(\mc X_{\ol t},\iota^{-1}\mathscr F)$, where $\iota$ denotes the inclusion $\mc X_{\ol t}\into\mc X$.
If $T=T_1\times_\Sigma T_2$, $\mc X=\mc X_1\times_\Sigma\mc X_2$ for smooth proper $T_i$-schemes $\mc X_i$, and $\mathscr F=\mathscr F_1\boxtimes\mathscr F_2$ for lcc Abelian sheaves $\mathscr F_i$ on $\mc X_i$ of order invertible on $T_i$, the K\"unneth map
\[
\bigoplus_{a+b=i}
\mathscr H^a(\mc X_1/T_1,\mathscr F_1)
\boxtimes
\mathscr H^b(\mc X_2/T_2,\mathscr F_2)
\to
\mathscr H^i(\mc X/T,\mathscr F)
\]
is an isomorphism of sheaves, because it is an isomorphism on stalks by the usual K\"unneth theorem (\cite[Ch.\ VI, Corollary 8.13]{milne-book}).
\end{thing}

\begin{proof}[Proof of Proposition \ref{prop:enriques-brauer-pullback}]
Let $\pi\cln Y\to E\times F$ denote the blowup morphism.
We first claim that it is equivalent to compute the image of $\beta$ under
\[
H^2(S,\mu_2)
\to
H^2(Y,\mu_2)
\xrightarrow{\pi_*}
H^2(E\times F,\mu_2)
\to
H^1(E,\Z/2)\otimes H^1(F,\mu_2).
\]
For this, we will show that the triangle
\[
\begin{tikzcd}
[column sep=0em,row sep=small]
H^2(Y,\mu_2)
\arrow[rr,"\pi_*"]
\arrow[rd,"u'^*"']
&&
H^2(E\times F,\mu_2)
\arrow[ld,"u^*"]
\\
&
H^2(Y^\circ,\mu_2)
\end{tikzcd}
\]
commutes, where $u$ and $u'$ are the respective open embeddings.
Let $\iota$ be the inclusion $Z\into E\times F$, and let $\iota'\cln Z'\into Y$ be the inclusion of the exceptional divisor.
Then we have $u'^*\circ \pi^*=u^*$ and $\pi_*\circ\iota_*'=\iota_*\circ(\pi|_{Z'})_*$ by functoriality, $u^*\circ\iota_*=0$ and $u'^*\circ\iota'_*=0$ by the Gysin sequence (\cite[Ch.\ VI, Remark 5.4(b)]{milne-book}; we also are using \cite[Ch.\ VI, Remark 11.6(b)]{milne-book}), and $\pi_*\circ\pi^*=\mr{id}$ because $\pi$ is birational.
Now fix $\xi\in H^2(Y,\mu_2)$.
By the blowup formula \eqref{thing:blowup}, we can (uniquely) write $\xi=\pi^*\xi_0+\iota'_*\zeta$ for some $\xi_0\in H^2(E\times F,\mu_2)$ and $\zeta\in H^0(Z',\mu_2)$.
Then
\[
u'^*\xi
=
u'^*\pi^*\xi_0+u'^*\iota'_*\zeta
=
u^*\xi_0
\]
and
\[
u^*\pi_*\xi
=
u^*\pi_*\pi^*\xi_0
+
u^*\pi_*\iota'_*\zeta
=
u^*\xi_0
+
u^*\iota_*(\pi|_{Z'})_*\zeta
=
u^*\xi_0,
\]
proving the commutativity of the triangle.

The open modular curve $Y_1(4)$, the fine moduli space of elliptic curves with a marked point of order $4$, exists as a connected scheme over $\Z[1/2]$ (\cite{mumford-book});\footnote{
It would be more natural to argue with $Y_1(2)$ instead, but this would require viewing $Y_1(2)$ as a stack; we prefer to use only schemes in this paper.
}
let $\mc E\to Y_1(4)$ denote its universal elliptic curve.
Set $T\ceq Y_1(4)\times_\Z Y_1(4)$, and let $\mc Y\to\mc E\times_\Z\mc E$ be the blowup along the $2$-torsion locus $\mc Z\ceq\mc E[2]\times_\Z\mc E[2]$.
Since both $\mc E\times_\Z\mc E$ and $\mc Z$ are smooth and proper over $T$, so is $\mc Y$, and for any $T'\to T$, the base-change $\mc Y_{T'}$ is naturally identified with the blowup of $(\mc E\times_\Z\mc E)_T$ along $\mc Z_T$.
Finally, we can define quotients $\mc Y\to\mc X\to\mc S$ by the Kummer and Lieberman involutions, respectively, where for the latter we use that $\mc E$ has a distinguished $2$-torsion section, namely twice the distinguished $4$-torsion section.
To summarize, $\mc Y\to\mc S$ is the morphism of smooth proper $T$-schemes whose fiber over $((E,P'),(F,Q'))\in T(k)$, where $2P'=P$ and $2Q'=Q$, is the map $Y\to S$ defined in \eqref{thing:lieberman-setup}.

By the first paragraph, we are interested in the composition
\begin{equation}
\label{eq:rpb}
\begin{aligned}
\mathscr H^2(\mc S/T,\mu_2)
\to
\mathscr H^2(\mc Y/T,\mu_2)
&\xrightarrow{\pi_*}
\mathscr H^2(\mc E\times_\Z\mc E/T,\mu_2)
\\&\hspace{3.5em}\to
\mathscr H^1(\mc E/Y_1(4),\Z/2)
\boxtimes
\mathscr H^1(\mc E/Y_1(4),\mu_2),
\end{aligned}
\end{equation}
where $\pi$ now denotes the $T$-morphism $\mc Y\to\mc E\times_\Z\mc E$.
Notice that $\mathscr H^2(\mc S/T,\mu_2)$ admits an lcc subsheaf $\mathscr G$ whose stalk at a geometric point $\ol t$ of $T$ is the space of algebraic classes in $H^2(\mc S_{\ol t},\mu_2)$; indeed, the image of $\mathscr H^2(\mc S/T,\mu_4)\to\mathscr H^2(\mc S/T,\mu_2)$ has this property, as noted in \eqref{thing:enriques-brauer}.
We claim that $\mathscr G$ is included in the kernel of \eqref{eq:rpb}.
Indeed, when $\ol t=(\ol t_1,\ol t_2)$ and the corresponding elliptic curves $\mc E_{\ol t_1}$ and $\mc E_{\ol t_2}$ are not isogenous, the only algebraic classes in $H^2(\mc E_{\ol t_1}\times\mc E_{\ol t_2},\mu_2)$ are classes in the $(0,2)$- and $(2,0)$-pieces of the K\"unneth decomposition.
Since every sheaf in sight is locally constant and $T$ is connected, inclusion at one stalk implies inclusion everywhere, proving the claim.
Therefore, \eqref{eq:rpb} induces a map
\begin{equation}
\label{eq:relbrpullback}
\mathscr H^2(\mc S/T,\mu_2)/\mathscr G
\to
\mathscr H
\ceq
\mathscr H^1(\mc E/Y_1(4),\Z/2)\boxtimes\mathscr H^1(\mc E/Y_1(4),\mu_2).
\end{equation}
But each stalk of $\mathscr H^2(\mc S/T,\mu_2)/\mathscr G$ is isomorphic to $\Br(S)[2]\cong\Z/2$, so this sheaf is constant (because $\Z/2$ has no nontrivial $\pi_1$-action), whence the image of \eqref{eq:relbrpullback} defines a global section of $\mathscr H$.
Thus we will be done if we can verify the following:
\begin{enumerate}
\item\label{item:ebpNZ}
The map \eqref{eq:relbrpullback} is nonzero.
\item\label{item:ebpGS}
The sheaf $\mathscr H$ has just one nonzero global section, and the stalk of this section over $((E,P'),(F,Q'))\in T(k)$ is the one described in the statement of the present proposition.
\end{enumerate}
To prove \ref{item:ebpNZ}, it suffices to check that $\Br(\mc S_{\ol t})\to\Br(\mc Y_{\ol t})$ is nonzero for some $\ol t$.
As a consequence of \cite[Corollary B(i)]{sv-brauer}, there exists $\ol t\in T(\C)$ for which the pullback map $\Br(\mc S_{\ol t})\to\Br(\mc X_{\ol t})$ is nonzero, where $\mc X_{\ol t}\to\mc S_{\ol t}$ is the K3 cover. 
Examining the Hochschild--Serre spectral sequence $E_2^{a,b}\ceq H^a(\Z/2,H^b(\mc Y_{\ol t}^\circ,\Gm))\Rightarrow H^{a+b}(\mc X_{\ol t}^\circ,\Gm)$, one sees that the pullback map $\Br(\mc X_{\ol t})\to\Br(\mc Y_{\ol t})$ is an isomorphism (see \cite[Proposition 1.3]{sz-brauer}).
Thus \ref{item:ebpNZ} holds.

Now we prove \ref{item:ebpGS}.
It is clear that $\mathscr H$ has a global section with the stated description.
There are several ways to see that it is the only nonzero global section; here is one of them.
Consider the elliptic curves $E_0$ and $F_0$ over $\Q$ given, respectively, by the equations
\[
y^2=x(x^2+4)
\quad\text{and}\quad
y^2=x(x^2-x+1).
\]
We have $E_0[4](\Q)\cong F_0[4](\Q)\cong\Z/4$ (see \cite[elliptic curves \href{https://www.lmfdb.org/EllipticCurve/Q/32/a/4}{\texttt{32.a4}} and \href{https://www.lmfdb.org/EllipticCurve/Q/24/a/5}{\texttt{24.a5}}]{lmfdb}).
In addition, the action of $\Gal(\ol\Q/\Q)$ on $E_0[2]$ (by which we mean $E_0[2](\ol\Q)$) visibly factors through $\Gal(\Q(\sqrt{-1})/\Q)$, the nontrivial element of this group acting via the matrix
\[
\begin{pmatrix}
1&1\\0&1
\end{pmatrix}
\]
(with respect to a suitable basis); the same is true of $F_0[2]$, except that the action factors through $\Gal(\Q(\sqrt{-3})/\Q)$.
Therefore, the action of $\Gal(\ol\Q/\Q)$ on $\Hom(E_0[2],F_0[2])$ factors through $\Gal(\Q(\sqrt{-1},\sqrt{-3})/\Q)\cong\Gal(\Q(\sqrt{-1})/\Q)\times\Gal(\Q(\sqrt{-3})/\Q)$, and it is easily seen that there is just one nonzero invariant element (namely, the map with kernel $\{0,P\}$ and image $\{0,Q\}$, where $P$ and $Q$ are the nonzero elements of $E_0[2](\Q)$ and $F_0[2](\Q)$, respectively).

Let $t\in T(\Q)$ be the point determined by $E_0$ and $F_0$, and let $\ol t\in T(\ol\Q)$ be a geometric point above $t$.
We get a homomorphism $\Gal(\ol\Q/\Q)\cong\pi_1(\Spec(\Q),\Spec(\ol\Q))\to\pi_1(T,\ol t)$ under which the monodromy action of $\pi_1(T,\ol t)$ on $\mathscr H_{\ol t}\cong H^1(E_{0,\ol\Q},\Z/2)\otimes H^1(F_{0,\ol\Q},\mu_2)$ pulls back to the tensor product of the usual Galois actions on \'etale cohomology.
We saw in the previous paragraph that this action has one nonzero invariant, hence $\mathscr H$ has at most one nonzero global section, as desired.
\end{proof}

\begin{cor}
\label{cor:enriques-brauer-pullback}
In the setting of \tn(\ref{thing:lieberman-setup}\tn), the maps $\Br(S)\to\Br(X)$ and $\Br(S)\to\Br(E\times F)$ vanish if and only if there exists an isogeny $\phi\cln E\to F$ with $\phi(P)=0$ and $Q\in\phi(E[2])$.
\end{cor}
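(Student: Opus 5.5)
The plan is to reduce both vanishing statements to one question: whether the class $\alpha_{P^\vee}\otimes\alpha_Q$ of Proposition \ref{prop:enriques-brauer-pullback} is algebraic on $E\times F$.

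First, the two vanishing conditions agree. The diagram in \eqref{thing:lieberman-setup} shows that $\Br(S)\to\Br(E\times F)$ is the composite of $\Br(S)\to\Br(X)$, the pullback $\Br(X)\to\Br(Y)$, and the inverse of the isomorphism $\Br(E\times F)\iso\Br(Y)$. The proof of Proposition \ref{prop:enriques-brauer-pullback} records that $\Br(X)\to\Br(Y)$ is an isomorphism (\cite[Proposition 1.3]{sz-brauer}). Hence $\Br(S)\to\Br(X)$ vanishes if and only if $\Br(S)\to\Br(E\times F)$ does.

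Next we identify the image of the nonzero element. Take a non-algebraic $\beta\in H^2(S,\mu_2)$, so that $\beta$ maps to the generator of $\Br(S)\cong\Z/2$. Let $\beta'$ be its image in $H^2(E\times F,\mu_2)$. The image of $\beta$ in $\Br(E\times F)$ is the Brauer class of $\beta'$, so it vanishes exactly when $\beta'$ is algebraic. The argument of \eqref{thing:enriques-brauer} works on $E\times F$ as well: a class in $H^2(E\times F,\mu_2)$ is algebraic if and only if it dies in $\Br(E\times F)$. Under the Künneth decomposition
\[
H^2(E\times F,\mu_2)=H^2(E,\mu_2)\oplus\bigl(H^1(E,\Z/2)\otimes H^1(F,\mu_2)\bigr)\oplus H^2(F,\mu_2),
\]
the two outer summands consist of algebraic classes, namely the Chern classes of a point times $F$ and of $E$ times a point. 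So $\beta'$ is algebraic if and only if its middle component is. By Proposition \ref{prop:enriques-brauer-pullback}, that component is $\alpha_{P^\vee}\otimes\alpha_Q$.

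It then remains to decide when $\alpha_{P^\vee}\otimes\alpha_Q$ is algebraic. Write $\NS(E\times F)=\Z^2\oplus\Hom(E,F)$. Reducing mod $2$, the algebraic part of the middle summand is the image of the cycle class map on $\Hom(E,F)$. Identify the middle summand with $\Hom(E[2],F[2])$ using \eqref{thing:ec-coh}: $\Hom(E[2],\Z/2)\otimes F[2]\cong\Hom(E[2],F[2])$. The key claim, which I expect to be the main obstacle, is that under this identification the class of (the graph divisor attached to) $\phi\in\Hom(E,F)$ is $\phi|_{E[2]}$, perhaps up to a universal automorphism such as the Weil-pairing dual. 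To check it, I would compute for a graph $\Gamma_\phi$ the pullback of Künneth components, or equivalently the action of the correspondence $\Gamma_\phi$ on $H^1(-,\mu_2)\cong(-)[2]$, which is $\phi$ on $2$-torsion. The sign and twist conventions of \ref{thing:ec-coh}\ref{item:ec2} will need to be tracked carefully here.

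Granting the claim, $\alpha_{P^\vee}\otimes\alpha_Q$ corresponds to the map $E[2]\to F[2]$ with kernel $\{O,P\}$ and image $\{O,Q\}$. Since $\mathrm{NS}$ classes reduce additively, this class is algebraic if and only if some $\phi\in\Hom(E,F)$ restricts to exactly that map on $E[2]$, that is, $\phi(P)=0$ and $\phi(E[2])=\{O,Q\}$. Such a $\phi$ is nonzero, hence an isogeny. Conversely, an isogeny with $\phi(P)=0$ and $Q\in\phi(E[2])$ has image on $E[2]$ of order at most $2$ containing $Q$, so $\phi(E[2])=\{O,Q\}$, and $\phi|_{E[2]}$ is the required map. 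This gives the stated equivalence.
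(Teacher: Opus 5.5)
Your proposal is correct and follows the paper's proof. It uses Skorobogatov--Zarhin to equate the two vanishing conditions, reduces to algebraicity of the image of $\beta$ in $H^2(E\times F,\mu_2)$, and concludes via the identification $H^1(E,\Z/2)\otimes H^1(F,\mu_2)\cong\Hom(E[2],F[2])$ of \ref{thing:ec-coh}\ref{item:ecISO}. The step you flag as the main obstacle, which the paper leaves implicit, holds exactly with no extra automorphism: by \eqref{eq:kp0} the middle K\"unneth component of $[\Gamma_\phi]$ acts on $H^1(E,\mu_2)$ as $(\Gamma_\phi)_*=\phi_*$, and $\phi_*\alpha_P=\alpha_{\phi(P)}$ by the definition of $\alpha_P$ through the Poincar\'e pairing.
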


\begin{proof}
By \cite[Proposition 1.3]{sz-brauer}, $\Br(S)\to\Br(X)$ vanishes if and only if $\Br(S)\to\Br(E\times F)$ vanishes.
Since $\Br(S)\cong\Z/2$, the map $\Br(S)\to\Br(E\times F)$ vanishes if and only if, in the notation of Proposition \ref{prop:enriques-brauer-pullback}, the image of $\beta$ in $H^2(E\times F,\mu_2)$ is algebraic.
Because of the identification $H^1(E,\Z/2)\otimes H^1(F,\mu_2)\cong\Hom(E[2],F[2])$ of \ref{thing:ec-coh}\ref{item:ecISO}, the image of $\beta$ is algebraic if and only if such an isogeny $\phi$ exists.
\end{proof}

\section{Nonvanishing of some classes in Galois cohomology}
\label{sec:nonvanishing}

\noindent
The goal of this section is to give criteria (Corollary \ref{cor:galois-nonzero-application}) for certain classes in $H^3(S\times E,\mu_2)$ to be nonvanishing in Galois cohomology, where $S$ is an Enriques surface defined by a Lieberman involution and $E$ an elliptic curve.
The first such criterion was given by Diaz in \cite[\S4]{diaz}; we find two more criteria by using Proposition \ref{prop:enriques-brauer-pullback} above.
The key input, Proposition \ref{prop:galois-nonzero}, is modeled after \cite[Theorem 3.3]{auel-suresh}, which is in turn a modified version of an argument of Gabber (\cite[Appendice]{ct-g}).

\begin{thing}
Let $(K,v)$ be a discretely valued field with residue field $\kappa$, let $n$ be a positive integer not divisible by $\charac(\kappa)$, and let $A$ be an unramified $n$-torsion finite $\Gal_K$-module.
Then there exists a ``residue map'' in Galois cohomology
$
r_v
\cln
H^i(K,A\otimes\mu_n)
\to
H^{i-1}(\kappa,A)$.
It can be written down explicitly at the level of cochains in group cohomology (\cite[II, Appendix, \S1]{serre}) or defined by the Gysin sequence in \'etale cohomology (\cite[\S3.3]{ct-birational}).
The following can be checked at the level of cochains in group cohomology.
\begin{enumerate}
\item
The residue map is natural in $(K,v)$ in the sense that if $K'/K$ is a field extension and $v'$ is a discrete valuation on $K'$ such that $v'|_K=v$, then the square
\[
\begin{tikzcd}
H^i(K,A\otimes\mu_n)
\arrow[r,"r_v"]
\arrow[d]&
H^{i-1}(\kappa,A)
\arrow[d]
\\
H^i(K',A\otimes\mu_n)
\arrow[r,"r_{v'}"]&
H^{i-1}(\kappa',A)
\end{tikzcd}
\]
commutes, where $\kappa'$ denotes the residue field of $v'$.
\item
Let $\alpha\in H^1(K,\mu_n)$ and $\xi\in H^i(K,A)$.
If the restriction of $\xi$ to the decomposition subgroup of $v$ is pulled back from some $\tilde\xi\in H^i(\kappa,A)$, then $r_v(\alpha\smallsmile\xi)=r_v(\alpha)\cdot\tilde\xi$.
\end{enumerate}
\end{thing}

\noindent
In the following, given a discrete valuation ring $\mf o$, an integral $\mf o$-scheme $\mc X$ with integral special fiber, and a map $\mf o\to L$ for some field $L$, we write $L(\mc X)$ for the function field of the base-change $\mc X_L$.

\begin{prop}
\label{prop:galois-nonzero}
Let $X,Y_1,\dots,Y_b$ be integral $k$-schemes, and let $\xi\in H^a(X,\mu_n^{\otimes a'})$ and $\alpha_i\in H^1(Y_i,\mu_n)$ for each $i$.
Assume that there exist
\begin{itemize}
\item
a discrete valuation subring $\mf o$ of $k$ with residue field $\kappa$ such that $\charac(\kappa)\nmid n$,
\item
integral $\mf o$-schemes $\mc X,\mc Y_1,\dots,\mc Y_b$ which recover $X,Y_1,\dots,Y_b$ upon base-change to $k$ and have integral special fibers $\mc X_\kappa,\mc Y_{1,\kappa},\dots,\mc Y_{b,\kappa}$, and
\item
classes $\xi_{\mf o}\in H^a(\mc X,\mu_n^{\otimes a'})$ and $\alpha_{i,\mf o}\in H^1(\mc Y_i,\mu_n)$ \end{itemize}
such that
\begin{conditions}
\item
$\xi_{\mf o},\alpha_{1,\mf o},\dots,\alpha_{b,\mf o}$ recover $\xi,\alpha_1,\dots,\alpha_b$ upon pul{}lback to $X,Y_1,\dots,Y_b$, respectively.
\item
\label{item:gnX}
$\xi_{\mf o}$ does not vanish in $H^a(\kappa'(\mc X),\mu_n^{\otimes a'})$ for any field extension $\kappa'/\kappa$.
\item
\label{item:gnA}
for each $i$, the image of $\alpha_{i,\mf o}$ under
\[
H^1(\mc Y_i,\mu_n)
\to
H^1(\kappa(\mc Y_i),\mu_n)
\xrightarrow{r_{v_i}}
\Z/n
\]
is invertible for some valuation $v_i$ on $\kappa(\mc Y_i)$ induced by a prime divisor of $\mc Y_{i,\kappa}$.
\end{conditions}
Then $\xi\boxtimes\alpha_1\boxtimes\cdots\boxtimes\alpha_b$ does not vanish in $H^{a+b}(k(X\times\prod_iY_i),\mu_n^{\otimes a'+b})$.
\end{prop}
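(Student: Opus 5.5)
Induction on $b$, peeling off one $\alpha$ at a time via a residue map, with the base case $b=0$ supplied by hypothesis \ref{item:gnX}. The inductive statement is strengthened so it survives field extensions.

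\emph{Claim.} For every field extension $\kappa'/\kappa$, the class $\xi_{\mf o}\boxtimes\alpha_{1,\mf o}\boxtimes\cdots\boxtimes\alpha_{b,\mf o}$, restricted to the function field $\kappa'(\mc X\times_{\mf o}\mc Y_1\times\cdots\times\mc Y_b)$ of the special fiber, is nonzero. The special fiber is integral: its factors are integral, and I would pass to $\bar\kappa$, or simply work over $\kappa'$ algebraically closed, where products of integral varieties are integral.

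\emph{Inductive step.} Set $W=\mc X\times\mc Y_1\times\cdots\times\mc Y_{b-1}$, let $L=\kappa'(W)$, and let $M=\kappa'(W\times\mc Y_b)$. Then $M$ is the function field of $\mc Y_{b,\kappa}\otimes_\kappa L$. The prime divisor $D$ of $\mc Y_{b,\kappa}$ giving $v_b$ pulls back to a prime divisor on $(\mc Y_{b,\kappa})_L$, possibly after replacing $L$ by a finite extension. This defines a valuation $w$ on $M$ extending $v_b$, with residue field $L(D)$, which contains $L$.
\begin{itemize}
\item The class $\eta=\xi\boxtimes\alpha_1\boxtimes\cdots\boxtimes\alpha_{b-1}$ comes from $L$, so it is unramified at $w$ and its residue-field specialization $\tilde\eta$ is the restriction of $\eta$ to $L(D)$.
\item The residue of $\alpha_b$ at $w$ equals $r_{v_b}(\alpha_b)$ times the ramification index. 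Since $D$ pulls back with multiplicity $1$, this is a unit in $\Z/n$, by naturality of residues and hypothesis \ref{item:gnA}.
\item Property (b) of residues gives $r_w(\alpha_b\smallsmile\eta)=r_w(\alpha_b)\cdot\tilde\eta$, up to the sign from commuting the cup product.
\end{itemize}
So it suffices that $\eta$ is nonzero in $H^*(L(D),\mu_n^{\otimes\cdot})$. Now $L(D)=\kappa''(W)$ with $\kappa''=\kappa'(D)$, and this nonvanishing is exactly the inductive hypothesis applied to the field $\kappa''$. This is why the claim quantifies over all $\kappa'$, and likewise why \ref{item:gnX} does.

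\emph{Passing from the special fiber to $k$.} Let $\mc Z=\mc X\times\prod_i\mc Y_i$ over $\mf o$; it is integral with integral special fiber. The generic point of the special fiber is a codimension-one point of $\mc Z$, so it gives a discrete valuation $u$ on $\mathrm{Frac}(\mf o)(\mc Z)$. Let $K$ be the completion or henselization of $\mathrm{Frac}(\mf o)(\mc Z)$ at $u$. The class $\Theta=\xi_{\mf o}\boxtimes\alpha_{1,\mf o}\boxtimes\cdots\boxtimes\alpha_{b,\mf o}$ is defined on $\mc Z$, hence unramified at $u$. Its image in $H^*(K)$ is the inflation of its specialization to the residue field $\kappa(\mc Z_\kappa)$, which is nonzero by the Claim. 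Inflation from the residue field into a henselian field is injective, since the decomposition group splits as inertia semidirect the residue Galois group.

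The main obstacle is moving up to $k$, which may be a large algebraically closed field and is not just $\mathrm{Frac}(\mf o)$. I would extend the valuation $u$ to $k(Z)$, with $Z=X\times\prod_iY_i$, by extending the valuation of $\mf o$ to a valuation on $k$ and taking the Gauss-type extension along the special fiber. The residue field then becomes $\bar\kappa'(\mc Z_\kappa)$ for the residue field $\kappa'$ of $k$. This is why hypothesis \ref{item:gnX} and the Claim are stated for arbitrary $\kappa'$.

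The remaining worry is that extending to $k$ may introduce ramification of $v$ on $k$ (the value group becomes divisible). To handle this, I would run the same specialization argument with the nondiscrete extension, still using unramifiedness of $\Theta$ and injectivity of inflation for henselian valued fields with $n$ prime to the residue characteristic. Failing that, I would reduce to a finitely generated subfield of $k$ over which everything is defined, and use that restriction to $k(Z)$ is injective in the limit, by checking over a large discretely valued subfield.
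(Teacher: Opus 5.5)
Your argument is correct, and its core is the paper's. You get nonvanishing of $\Theta$ in $H^{a+b}(\kappa(\mc Z),\mu_n^{\otimes a'+b})$ by iterated residues along the $W_i$. Your induction on $b$, with the claim quantified over all $\kappa'$, is a tidier packaging of the paper's ``repeatedly taking residues'' down to hypothesis (ii) over $\kappa(W_1\times\cdots\times W_b)$. You then lift to $k_0(\mc Z)$ through the completion at the generic point of $\mc Z_\kappa$; the paper quotes \cite[Appendice, Lemme A5]{ct-g} for the injectivity that you derive from the splitting of the decomposition group.

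The only divergence is the passage to $k$. The paper stays with discrete valuations. It extends $v$ to $k_0(\mf B)$, for $\mf B$ a transcendence basis of $k/k_0$, by the Gauss valuation; this is still discrete with residue field $\kappa(\mf B)$, so all hypotheses persist. It then reruns the argument over each finite extension of $k_0(\mf B)$ inside $k$. It concludes because $H^{a+b}(k(X\times\prod_iY_i),\mu_n^{\otimes a'+b})$ is the direct limit over these stages, so nonvanishing at every stage suffices. This is exactly your fallback, and it is where (ii) for arbitrary $\kappa'$ is used; you could simply commit to it.

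Your primary route also works, and the worry you raise about it points the wrong way. Because $k$ is algebraically closed, any extension $w$ of $v$ to $k$, with valuation ring $\mc O_w\subseteq k$, has divisible value group and algebraically closed residue field. Suppose one grants that the local ring of $\mc Z\otimes_{\mf o}\mc O_w$ at the generic point of its special fiber is the valuation ring of a Gauss-type extension of $w$ to $k(X\times\prod_iY_i)$. Then that extension has the same divisible value group, since the residue field has full transcendence degree. Consequently the inertia group of its henselization is a pro-$\charac(\kappa)$ group, trivial in residue characteristic $0$. Inflation from the residue field is then an isomorphism on $\mu_n^{\otimes j}$-cohomology, because the prime-to-$\charac(\kappa)$ tame inertia is $\Hom(\Gamma_{\mathrm{sep}}/\Gamma,\cdot)=0$.

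That route avoids both the limit argument and the splitting lemma, at the price of leaving the world of discrete valuations. The paper's route keeps everything inside the standard residue formalism for discrete valuation rings.
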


\begin{proof}
Let $k_0\subseteq k$ denote the fraction field of $\mf o$.
We first reduce to the case when $k$ is algebraic over $k_0$.
Let $\mf B$ be a transcendence basis of $k/k_0$.
Then $v$ extends to a discrete valuation on $k_0(\mf B)$ by putting
\[
v{\left(
\sum_za_z\cdot z
\right)}
\ceq
\min_{z}v(a_z),
\]
where $z$ ranges over all monomials in the elements of $\mf B$ and each $a_z$ lives in $k_0$.
All of the hypotheses remain true if we replace $k_0$ by $k_0(\mf B)$.

Put $\mc Z\ceq\mc X\times\prod_i\mc Y_i$ (product over $\mf o$) and $\theta_{\mf o}\ceq\xi_{\mf o}\boxtimes\alpha_{1,\mf o}\boxtimes\cdots\boxtimes\alpha_{b,\mf o}\in H^{a+b}(\mc Z,\mu_n^{\otimes a'+b})$.
We will show that the image of $\theta_{\mf o}$ in $H^{a+b}(k_0(\mc Z),\mu_n^{\otimes a'+b})$ is nonzero.
This suffices to prove the present proposition, because the same argument will show that the image of $\theta_{\mf o}$ remains nonzero when $k_0$ is replaced by any finite extension, hence also (thanks to the previous paragraph) by $k$ itself.

The prime divisor $\mc Z_\kappa$ of $\mc Z$ gives rise to an extension of $v$ to a discrete valuation on $k_0(\mc Z)$ with residue field $\kappa(\mc Z)$.
Let $\mf o(\mc Z)$ denote its valuation ring, and let $\mf o(\mc Z)_v$ and $k_0(\mc Z)_v$ be the respective completions at $v$.
We get a commutative diagram
\[
\begin{tikzcd}
&[-20pt]
H^{a+b}(\mc Z_{k_0},\mu_n^{\otimes a'+b})\arrow[r]&
H^{a+b}(k_0(\mc Z),\mu_n^{\otimes a'+b})\arrow[r]&
H^{a+b}(k_0(\mc Z)_v,\mu_n^{\otimes a'+b})
\\
\theta_{\mf o}\arrow[r,phantom,"\in"]&
H^{a+b}(\mc Z,\mu_n^{\otimes a'+b})\arrow[u]\arrow[r]\arrow[d]&
H^{a+b}(\mf o(\mc Z),\mu_n^{\otimes a'+b})\arrow[u]\arrow[r]\arrow[d]&
H^{a+b}(\mf o(\mc Z)_v,\mu_n^{\otimes a'+b})\arrow[u,hook,"(*)"']\arrow[d,"\sim"]
\\&
H^{a+b}(\mc Z_\kappa,\mu_n^{\otimes a'+b})\arrow[r]&
H^{a+b}(\kappa(\mc Z),\mu_n^{\otimes a'+b})\arrow[r,equals]&
H^{a+b}(\kappa(\mc Z),\mu_n^{\otimes a'+b}),
\end{tikzcd}
\]
where the arrow $(*)$ is injective by \cite[Appendice, Lemme A5]{ct-g}.\footnote{
For details on the splitting in the proof of \tit{loc.\ cit.}, see MathOverflow questions \href{https://mathoverflow.net/q/386147}{\texttt{386147}} and \href{https://mathoverflow.net/q/462675}{\texttt{462675}}.
}
In view of the diagram, it suffices to show that the image $\theta'$ of $\theta_{\mf o}$ in $H^{a+b}(\kappa(\mc Z),\mu_n^{\otimes a'+b})$ is nontrivial.
Let $\xi'$ and $\alpha_i'$ be, respectively, the images of $\xi_{\mf o}$ and $\alpha_{i,\mf o}$ in $H^a(\kappa(\mc X),\mu_n^{\otimes a'})$ and $H^1(\kappa(\mc Y_i),\mu_n)$, so that $\theta'=\xi'\boxtimes\alpha_1'\boxtimes\cdots\boxtimes\alpha_b'$.
Let $W_i$ be a prime divisor of $\mc Y_{i,\kappa}$ inducing the valuation $v_i$ of item \ref{item:gnA} of the statement.
Then the prime divisor $\mc X_\kappa\times\mc Y_{1,\kappa}\times\cdots\times\mc Y_{b-1,\kappa}\times W_b$ of $\mc Z_\kappa$ induces an extension of $v_b$ to a valuation $\tilde v_b$ on $\kappa(\mc Z)$, and
\[
r_{\tilde v_b}(\theta')
=
r_{v_b}(\alpha_b')\cdot f^*(\xi'\boxtimes\alpha_1'\boxtimes\cdots\boxtimes\alpha_{b-1}');
\]
here $r_{v_b}(\alpha_b')\in\Z/n$ is invertible by \ref{item:gnA}, and $f^*$ denotes pullback in Galois cohomology from $\kappa(\mc X\times\mc Y_1\times\cdots\times\mc Y_{b-1})$ to $\kappa(\mc X\times\mc Y_1\times\cdots\times\mc Y_{b-1}\times W_b)$.
Repeatedly taking residues in a similar way, we find that $(r_{\tilde v_1}\circ\cdots\circ r_{\tilde v_b})(\theta')$ is, up to an element of $(\Z/n)^\times$, the pullback of $\xi'$ from $\kappa(\mc X)$ to $\kappa(\mc X\times W_1\times\cdots\times W_b)=\kappa(W_1\times\cdots\times W_b)(\mc X)$, which is nonzero by \ref{item:gnX}.
\end{proof}

\begin{cor}
\label{cor:galois-nonzero-application-0}
Let $k$ be an algebraical{}ly closed field of characteristic not $2$.
Let $E$ be an el{}liptic curve over $k$, $S$ an Enriques surface over $k$, $\beta\in H^2(S,\mu_2)$ any non-algebraic class, and $P\in E[2]\setminus\{O\}$.
Assume that there exists a subfield $k_0$ of $k$ and a discrete valuation $v$ on $k_0$ with residue characteristic not $2$ such that $(E,P)$ and $S$ descend to $k_0$ and have, respectively, multiplicative and good reduction mod $v$, and further that $P$ reduces to a smooth point.
Then the image of $\beta\otimes\alpha_P$ in $H^3(k(S\times E),\mu_2^{\otimes2})$ is nonzero, where $\alpha_P$ is as \ref{thing:ec-coh}\ref{item:ecISO}.
\end{cor}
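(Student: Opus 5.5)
We will apply Proposition \ref{prop:galois-nonzero} with $n=2$, $a=2$, $a'=1$, $b=1$, $X=S$, $\xi=\beta$, $Y_1=E$ and $\alpha_1=\alpha_P\in H^1(E,\mu_2)$. Replacing $k_0$ by its henselization or a suitable finite extension, and $v$ by an extension of it, we may assume the following. $v$ comes from a discrete valuation ring $\mf o\subseteq k$, extended as in the proof of Proposition \ref{prop:galois-nonzero}. $S$ spreads out to a smooth proper $\mf o$-scheme $\mc X$ whose special fiber $\mc X_\kappa$ is an Enriques surface. $E$ has split multiplicative reduction, so there is a regular proper model $\mc Y$ of $E$ whose special fiber is a Néron polygon. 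We will also contract, or choose an integral open piece of, this model so that the special fiber is integral: for example, take the open subscheme of the smooth locus consisting of the generic fiber together with the identity component $\Gm$ of the special fiber. Proposition \ref{prop:galois-nonzero} only uses integral $\mf o$-schemes with integral special fiber, not properness, so this is allowed. The hypothesis that $P$ reduces to a smooth point should be arranged so that $P$ extends to the chosen model; if it lands in a non-identity component, use that component's translate instead.

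\textbf{Step 1: extending $\beta$ and checking condition (ii).} Since $\mc X\to\Spec\mf o$ is smooth and proper, $\mathscr H^2(\mc X/\mf o,\mu_2)$ is lcc, so $\beta$ extends to a class $\beta_{\mf o}$ on $\mc X$ after a finite extension of $k_0$. Its restriction to the special fiber is non-algebraic: the subsheaf of algebraic classes, namely the image of $\mathscr H^2(\mu_4)$ as in \eqref{thing:enriques-brauer}, is compatible with specialization. So the special class maps to the nonzero element of $\Br(\mc X_{\ol\kappa})\cong\Z/2$. For (ii) we need its image in $H^2(\kappa'(\mc X),\mu_2)$ to be nonzero for every extension $\kappa'/\kappa$. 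This is the image in $\Br(\kappa'(\mc X))[2]$, and $\Br$ of a smooth proper variety injects into $\Br$ of its function field. It therefore remains to see that the Brauer class stays nonzero on $\mc X_{\kappa'}$. Over $\ol{\kappa'}$ this holds because $\Br(S)\cong\Z/2$ is preserved under extension of algebraically closed fields, and a class that is nonzero geometrically is nonzero over $\kappa'$. I expect this step to be routine.

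\textbf{Step 2: extending $\alpha_P$ and checking condition (iii).} The class $\alpha_P$ is Poincaré dual to $P$. Under \ref{thing:ec-coh}\ref{item:ec2} it corresponds to the $\Z/2$-torsor $\alpha_{P^\vee}$, which is given by an isogeny $E'\to E$ whose kernel pairs to $P$ under the Weil pairing. Over the Tate curve $E=\Gm/q^{\Z}$ we have $E[2]=\{\pm1\}\times\{\pm\sqrt q\}$ up to the quotient. The points reducing to the identity component are those in $\mu_2$, and because $P$ reduces to a smooth point, $P$ has this form. Under the Weil pairing, $P=-1$ pairs nontrivially with $\sqrt q$, so $\alpha_P$ is the Kummer-type class of the torsor $u\mapsto u^2$ on $\Gm/q^{\Z}$ quotiented suitably. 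Restricted to the identity component $\Gm=\Spec\kappa[u^{\pm1}]$ of the special fiber, this becomes the Kummer class $(u)\in H^1(\kappa(u),\mu_2)$. Its residue at the prime divisor $u=0$ of the special fiber (or at a point of $\mathbb P^1_\kappa$ outside the model) is $1\in\Z/2$. This verifies (iii), taking $W_1$ to be a divisor of the compactified identity component. I will therefore take $\mc Y$ to be $\mathbb P^1$-like near the special fiber, to ensure the valuation $v_1$ is induced by a prime divisor.

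\textbf{Main obstacle.} The delicate step is identifying $\alpha_P$ with $(u)$ and getting the Tate-twist bookkeeping of \ref{thing:ec-coh} right. If the identification fails, so that the torsor is unramified on the special fiber, its residue vanishes and the argument collapses. This is exactly where the hypothesis that $P$ reduces to a smooth point enters, and I would verify it first using the explicit Tate uniformization. Once it is in place, Proposition \ref{prop:galois-nonzero} gives that $\beta\boxtimes\alpha_P$ is nonzero in $H^3(k(S\times E),\mu_2^{\otimes2})$.
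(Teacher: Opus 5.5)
Your overall strategy is the paper's: apply Proposition \ref{prop:galois-nonzero} with $b=1$ and $(X,Y_1,\xi,\alpha_1)=(S,E,\beta,\alpha_P)$, and your Step 1 is essentially the paper's specialization argument for $\beta$. The gap is in Step 2, at exactly the point you flagged, and that step does fail.

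With the conventions of \ref{thing:ec-coh}, $\alpha_P$ corresponds to $\alpha_{P^\vee}$. This is the cover $E/P\to E$ dual to $E\to E/P$, whose classifying homomorphism $E[2]\to\Z/2$ has kernel $\{O,P\}$. That homomorphism is also $e_2(P,-)$, because the Weil pairing is alternating, so invoking the Weil pairing does not change the answer. For $P=-1$ on $E=\Gm/q^{\Z}$, squaring identifies $E/P$ with $\Gm/q^{2\Z}$, and $\alpha_P$ is the projection $\Gm/q^{2\Z}\to\Gm/q^{\Z}$. Over the identity component $\Gm$ of the special fibre this cover splits, into the identity component of $\Gm/q^{2\Z}$ and its translate by $q$. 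Hence the image of any extension $\alpha_{P,\mf o}$ in $H^1(\kappa(u),\mu_2)$ is $0$, and condition \ref{item:gnA} fails. Changing the model does not help, since this cover is a topological double cover of the skeleton and so splits at every divisorial valuation extending $v$. The squaring cover $\Gm/\sqrt q^{\Z}\to\Gm/q^{\Z}$, which does restrict to the Kummer class $(u)$, has classifying kernel $\{1,\sqrt q\}$. So it is $\alpha_{\sqrt q}$, not $\alpha_{-1}$: you have conflated $\alpha_P$ with $\alpha_R$ for a point $R$ pairing nontrivially with $P$.

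The paper's proof in fact runs with $P$ reducing to the node. It writes $(E,P)=(E_\nu,P_\nu)$ with $v(\nu)>0$, and then $P_\nu=(0,0)$ is the node of $y^2=x^2(x-1)$. The same holds in Definition \ref{defn:suitable}\ref{item:gnaGGB}, where $v(\nu)\neq0$. The point that reduces into the smooth locus is $P_{\nu^*}$ on $E_{\nu^*}\cong E_\nu/P_\nu$, since $v(\nu^*)=0<v(\nu^*-1)$. This $P_{\nu^*}$ is the kernel of the covering $E_{\nu^*}\to E_\nu$ that represents $\alpha_{P_\nu^\vee}$. Consequently $\mc E_{\nu^*}\to\mc E_{\nu^*}/\mc P_{\nu^*}$ has special fibre $\Gm\xrightarrow{u\mapsto u^2}\Gm$, and $\mc Y_1=\mc E_{\nu^*}/\mc P_{\nu^*}$ is essentially your connected N\'eron model. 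In Tate terms this is $P=\pm\sqrt q$.

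So the hypothesis that actually drives the argument is that $P$ reduces to the singular point, equivalently that the kernel of the covering isogeny reduces to a smooth point. Under the literal ``smooth point'' reading you adopted, the degeneration yields nothing. With $P=\pm\sqrt q$, your Step 2 goes through as written. That includes your correct remark that the residue of $(u)$ must be taken at $0$ or $\infty$ of $\mathbb P^1_\kappa$ rather than at a point of $\Gm$, which is how the paper's exact sequence over $\kappa(t)$ should be read. Your suggestion that $P$ should extend to the model is beside the point: what must extend is the torsor, and the relevant section is the kernel on the covering curve.
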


\begin{proof}
This is a slight generalization of the situation of \cite[\S4]{diaz}, and our argument resembles that of Diaz.
We will show that the assumptions of Proposition \ref{prop:galois-nonzero} hold with $b=1$ and $(X,Y_1,\xi,\alpha_1)=(S,E,\beta,\alpha_P)$.

We first check condition \ref{item:gnA}.
After finitely enlarging $k_0$, we will assume that $(E,P)=(E_\nu,P_\nu)$ for some $\nu\in k\setminus\{0,1\}$.
Since $E_\nu$ has bad reduction, $v(\nu)\neq0$, and we will assume that $v(\nu)>0$ after replacing $\nu$ with $\nu^{-1}$ if necessary.
As noted in \eqref{thing:star} and \ref{thing:ec-coh}\ref{item:ec2}, the class $\alpha_{P_{\nu}^\vee}$ corresponds to the $\Z/2$-torsor $E_{\nu^*}\to E_{\nu^*}/P_{\nu^*}\cong E_\nu$.
Let $\mc E_{\nu^*}$ denote the smooth locus of the closed subscheme of $\mbb P^2_{\mf o}$ cut out by the Legendre equation $y^2=x(x-1)(x-\nu^*)$.
Since $v(\nu^*)=0$, the point $P_{\nu^*}$ spreads out to $\mc P_{\nu^*}\in\mc E_{\nu^*}(\mf o)$.
The group structure of $E_{\nu^*}$ extends to $\mc E_{\nu^*}$ (\cite[Ch.\ IV, Theorem 5.3]{silverman-II}), so we get another $\Z/2$-torsor $\mc E_{\nu^*}\to\mc E_{\nu^*}/\mc P_{\nu^*}$, hence a class $\alpha_{P_\nu^\vee,\mf o}\in H^1(\mc E_{\nu^*}/\mc P_{\nu^*},\Z/2)$ whose image in $H^1(E_\nu,\Z/2)$ is $\alpha_{P_\nu^\vee}$.
Since $v(\nu^*-1)>0$, the special fiber of $\mc E_{\nu^*}\to\mc E_{\nu^*}/\mc P_{\nu^*}$ is isomorphic to the double cover $\mbb G_\mr{m}\to\mbb G_\mr{m}$.
Therefore, $\alpha_{P_\nu,\mf o}$, the image of $\alpha_{P_\nu^\vee,\mf o}$ under $\Z/2\iso\mu_2$, satisfies condition \ref{item:gnA} of Proposition \ref{prop:galois-nonzero} (for which we take $\mc Y_1=\mc E_{\nu^*}/\mc P_{\nu^*}$) by the exact sequence
\[
H^1(\kappa,\mu_2)
\to
H^1(\kappa(t),\mu_2)
\xrightarrow{\prod_xr_{v_x}}
\prod_{x}\Z/2,
\]
where $x$ ranges over the closed points of $\mbb P^1_\kappa$ and $v_x$ is the associated discrete valuation (\cite[II, Appendix, \S4]{serre}).

Let $\mc S$ denote the smooth $\mf o$-model of $S$; we will take $\mc X=\mc S$.
We claim that, after finitely extending $\mf o$, there exists $\beta_{\mf o}\in H^2(\mc S,\mu_2)$ whose image in $H^2(S,\mu_2)$ is $\beta$ and whose image in $H^2(\mc S_{\ol\kappa},\mu_2)$ is not algebraic.
Granted this, the image of $\beta_{\mf o}$ is nonzero in $H^2(\kappa'(\mc S),\mu_2)$ for any field extension $\kappa'/\kappa$, because $\Br(\mc S_{\ol\kappa})\to\Br(\mc S_{\kappa'})$ is an isomorphism when $\kappa'$ is algebraically closed, so $\beta_{\mf o}$ satisfies condition \ref{item:gnX} of Proposition \ref{prop:galois-nonzero}.
To find such a $\beta_{\mf o}$, let $\ol{\mf o}$ be a strict Henselization of $\mf o$ inside $k$ (cf.\ \cite[Ch.\ I, Example 4.10 and p.\ 38]{milne-book}), and let $f\cln\mc S_{\ol{\mf o}}\to\Spec(\ol{\mf o})$ be the obvious map.
By the theorems on smooth and proper base change, the maps $\eta\cln H^2(\mc S_{\ol{\mf o}},\mu_{2^n})\to H^2(S,\mu_{2^n})$ are isomorphisms (cf.\ the proof of \cite[Lemma 7.8.2]{fu}).
Now $H^2(\mc S_{\ol{\mf o}},\mu_2)$ is the filtered colimit of $H^2(\mc S_{\mf o'},\mu_2)$ for certain finite ring maps $\mf o\to\mf o'$ (\cite[Ch.\ III, Lemma 1.16]{milne-book}); we choose $\beta_{\mf o}$ by taking a preimage of $\beta$ along $\eta$ and descending to some $\mf o'$ (and then replace $\mf o$ with $\mf o'$ and $k_0$ with the fraction field of $\mf o'$).
To show that the image of $\beta_{\mf o}$ in $H^2(\mc S_{\ol\kappa},\mu_2)$ is not algebraic, observe that the restriction maps $H^2(\mc S_{\ol{\mf o}},\mu_{2^n})\to H^2(\mc S_{\ol\kappa},\mu_{2^n})$ are also isomorphisms by the proper base change theorem (\cite[Corollary 7.3.3]{fu}).
Thus, since being algebraic is the same as lifting to a class with $\mu_4$-coefficients (see \eqref{thing:enriques-brauer}), and $\beta$ is not algebraic, the same is true for the image of $\beta_{\mf o}$.
\end{proof}

\noindent
For the notation and terminology in the following statement, see \eqref{thing:legendre}, \eqref{defn:suitable}, and \eqref{thing:ec-coh}.

\begin{cor}
\label{cor:galois-nonzero-application}
Let $k$ be an algebraical{}ly closed field of characteristic not $2$.
Let $\lambda,\mu,\nu\in k\setminus\{0,1\}$, and let $\beta\in H^2(S_{\lambda,\mu},\mu_2)$ be any non-algebraic class.
Assume that $(\lambda,\mu,\nu)$ is suitable.
Then the image of $\beta\otimes\alpha_{P_\nu}$ in $H^3(k(S_{\lambda,\mu}\times E_\nu),\mu_2^{\otimes2})$ is nonzero.
\end{cor}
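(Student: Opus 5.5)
We will deduce the corollary from Proposition \ref{prop:galois-nonzero} (and, in case \ref{item:gnaGGB}, directly from Corollary \ref{cor:galois-nonzero-application-0}), treating the three conditions of Definition \ref{defn:suitable} separately.

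\textbf{Case \ref{item:gnaGGB}.} Here $E_\lambda$ and $E_\mu$ have good reduction at $v$, since $v(\lambda)=v(\lambda-1)=v(\mu)=v(\mu-1)=0$ and the residue characteristic is not $2$. The Legendre models then give smooth $\mathfrak o$-models, and so does $S_{\lambda,\mu}$, built by the Kummer/Lieberman construction over $\mathfrak o$ as in the proof of Proposition \ref{prop:enriques-brauer-pullback}. Since $v(\nu)\neq0$, the curve $E_\nu$ has multiplicative reduction. The point $P_\nu=(0,0)$ reduces to a smooth point after possibly replacing $\nu$ by $1/\nu$, which does not change $(E_\nu,P_\nu)$. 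Corollary \ref{cor:galois-nonzero-application-0} then applies verbatim.

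\textbf{Cases \ref{item:gnaBB} and \ref{item:gnaB}.} Now $S_{\lambda,\mu}$ itself may degenerate, so we instead transport the problem to $E\times F$. Let $Y\to S_{\lambda,\mu}$ be the map from the blowup. Pullback along the rational map $E_\lambda\times E_\mu\dashrightarrow S_{\lambda,\mu}$ (of degree $4$) is injective on unramified cohomology modulo a norm argument, but the degree is even, so we cannot simply argue that way. Instead we use the structure of Proposition \ref{prop:galois-nonzero}: it suffices to show the class does not vanish over $k(Y\times E_\nu)\supseteq k(S_{\lambda,\mu}\times E_\nu)$. If it is nonzero after pullback, it was nonzero before.

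By Proposition \ref{prop:enriques-brauer-pullback}, the pullback of $\beta$ to $E_\lambda\times E_\mu$ equals $\alpha_{P_\lambda^\vee}\otimes\alpha_{P_\mu}$ plus a class in the $(2,0)$ and $(0,2)$ K\"unneth pieces. Those pieces are algebraic and hence vanish in $H^2$ of the function field. So the target class becomes $\alpha_{P_\lambda^\vee}\boxtimes\alpha_{P_\mu}\boxtimes\alpha_{P_\nu}$ on $E_\lambda\times E_\mu\times E_\nu$, up to the identification $\mathbb Z/2\cong\mu_2$. We apply Proposition \ref{prop:galois-nonzero} to this class, assigning the factors by type.

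\emph{Case \ref{item:gnaBB}.} Take $X=E_{\lambda_1}$, which has good reduction at least when $v(\lambda_1)=0$; if $v(\lambda_1)\ne0$ we adjust, for instance via $\lambda\mapsto 1-\lambda$ type symmetries of the level structure. Take $Y_1,Y_2$ to be the two curves with $v(\lambda_i)\neq0$, which have multiplicative reduction. For these, condition \ref{item:gnA} is checked exactly as in the proof of Corollary \ref{cor:galois-nonzero-application-0}, using the $\nu^*$ trick. Condition \ref{item:gnX} for a class $\alpha_{P}$ on a good-reduction elliptic curve holds because a nonzero class in $H^1$ of a smooth proper curve over $\kappa'$ (with $\kappa'$ algebraically closed WLOG) stays nonzero in the function field. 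One must check that the hypothesis $v(\lambda_1-1)=0$ makes the reduced torsor nontrivial; this is the role of that hypothesis.

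\emph{Case \ref{item:gnaB}.} Take $\mathcal X$ to be the smooth model of $E_{\lambda_1}\times E_{\lambda_2}$ with $\xi=\alpha_{P_{\lambda_1}^\vee}\otimes\alpha_{P_{\lambda_2}}$, and $b=1$ with $Y_1=E_{\lambda_3}$ of multiplicative reduction. Condition \ref{item:gnX} amounts to showing that $\xi$ reduced to $\overline\kappa$ is not algebraic in $H^2$, i.e.\ has nonzero image in $\mathrm{Br}$. By the identification with $\mathrm{Hom}(E[2],F[2])$ used in Corollary \ref{cor:enriques-brauer-pullback}, this is precisely the non-existence of the isogeny assumed in \ref{item:gnaB}.

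\textbf{Main obstacle.} I expect the hardest point to be the bookkeeping that ensures the relevant $\mu_2$ versus $\mathbb Z/2$ classes and the choice between $\lambda_i$ and $1/\lambda_i$ reduce correctly, especially in case \ref{item:gnaBB}. I also need to justify that passing from $S_{\lambda,\mu}$ to $Y$ may be used for nonvanishing, which it may, since pullback to a field extension preserves zero. Care is also needed because the hypotheses in \ref{item:gnaB} are stated with $\lambda,\mu$ rather than $\lambda_1,\lambda_2$; I would read them as applying to $\lambda_1,\lambda_2$.
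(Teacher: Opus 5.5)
Your overall plan is the paper's. Case~\ref{item:gnaGGB} goes through Corollary~\ref{cor:galois-nonzero-application-0}. In the other cases you pull back to $E_\lambda\times E_\mu\times E_\nu$ and use Proposition~\ref{prop:enriques-brauer-pullback>}, then apply Proposition~\ref{prop:galois-nonzero} with the same assignments of $X$, $Y_i$, $\xi$, $\alpha_i$. Your remark that the $(2,0)$ and $(0,2)$ K\"unneth components die in the function field is a correct detail the paper leaves implicit. Case~\ref{item:gnaB} is fine as you have it.

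The gap is in case~\ref{item:gnaBB}, in verifying condition~\ref{item:gnX} for $X=E_{\lambda_1}$. The only hypothesis on $\lambda_1$ is $v(\lambda_1-1)=0$. This forces $v(\lambda_1)\geq0$ but allows $v(\lambda_1)>0$. In that case $j(E_{\lambda_1})$ is non-integral, so $E_{\lambda_1}$ has bad reduction in every model, and no ``adjustment'' produces a good-reduction model.

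Your suggested symmetry $\lambda\mapsto1-\lambda$ fails for two reasons. First, the isomorphism $E_{1-\lambda}\cong E_\lambda$ does not carry $P_{1-\lambda}$ to $P_\lambda$ (see Example~\ref{eg:c-possibilities-2}), so it changes the class. Second, $1-\lambda_1$ satisfies $v((1-\lambda_1)-1)>0$, which is exactly the configuration the hypothesis excludes.

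This case is not marginal. In the triples $(25,25,25)$ at $5$ and $(9/4,9/4,9/4)$ at $3$ of Example~\ref{eg:c-possibilities}, every admissible $\lambda_1$ has positive valuation, so $X$ necessarily has multiplicative reduction.

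What is missing is that the model from the proof of Corollary~\ref{cor:galois-nonzero-application-0} also yields condition~\ref{item:gnX}. If $v(\lambda_1)>0$, take $\mathcal X=\mathcal E_{\lambda_1^*}/\mathcal P_{\lambda_1^*}$. Its special fiber is $\mathbb G_{\mathrm m,\kappa}$, and $\xi_{\mathfrak o}$ restricts there to the squaring cover, i.e.\ the class of $t$ in $H^1(\kappa'(t),\mu_2)$. This class is nonzero for every $\kappa'/\kappa$. Equivalently, on the Legendre model $\xi$ is the class of $x$, which on the special fiber $y^2=x^2(x-1)$ becomes a non-square.

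This is the real role of $v(\lambda_1-1)=0$. It rules out $v(\lambda_1-1)>0$, where the special fiber is $y^2=x(x-1)^2$. There $x=(y/(x-1))^2$ is a square, so the double cover splits on the special fiber. In the good-reduction case, nontriviality of the reduced torsor is automatic and needs no hypothesis.

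A smaller point in case~\ref{item:gnaGGB}: it is not true that $P_\nu$ reduces to a smooth point after $\nu\mapsto\nu^{-1}$. When $v(\nu)\neq0$, the roots $0$ and $\nu$ (or $0$ and $\nu^{-1}$ in the integral model) coalesce, so $P_\nu$ always reduces to the node. The argument in the proof of Corollary~\ref{cor:galois-nonzero-application-0} is run under $v(\nu)>0$, which is exactly this nodal configuration. So case~\ref{item:gnaGGB} does follow from that argument, but not via the reduction property you asserted.
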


\begin{proof}
Case \ref{thing:legendre}\ref{item:gnaGGB} is a special case of Corollary \ref{cor:galois-nonzero-application-0}.
In general, consider the pullback of $\beta\otimes\alpha_{P_\nu}$ to $E_\lambda\times E_\mu\times E_\nu$.
By Proposition \ref{prop:enriques-brauer-pullback}, it equals $\alpha_{P_\lambda^\vee}\otimes\alpha_{P_\mu}\otimes\alpha_{P_\nu}$.
We conclude the proof by using the same arguments as given above to apply Proposition \ref{prop:galois-nonzero}.
Specifically, in case \ref{defn:suitable}\ref{item:gnaBB}, the assignment $(X,Y_1,Y_2,\xi,\alpha_1,\alpha_2)=(E_{\lambda_1},E_{\lambda_2},E_{\lambda_3},\alpha_{P_{\lambda_1}^\vee},\alpha_{P_{\lambda_2}},\alpha_{P_{\lambda_3}})$ works, and in case \ref{defn:suitable}\ref{item:gnaB}, $(X,Y_1,\xi,\alpha_1)=(E_{\lambda_1}\times E_{\lambda_2},E_{\lambda_3},\alpha_{P_{\lambda_1}^\vee}\otimes\alpha_{P_{\lambda_2}},\alpha_{P_{\lambda_3}})$ works.
Actually, the argument simplifies, because in these cases $\xi_{\mf o}$ can be written down explicitly.
The clunky hypothesis ``moreover there is no isogeny\dots'' of \ref{defn:suitable}\ref{item:gnaB} forces hypothesis \ref{item:gnA} of Proposition \ref{prop:galois-nonzero} to hold; more precisely, it guarantees that the class \smash{$\alpha_{P_{\lambda_1}^\vee}\otimes\alpha_{P_{\lambda_2}}$} spreads out to $\xi_{\mf o}$ whose image in $H^2(E_{\ol{\lambda_1}}\times E_{\ol{\lambda_2}},\mu_2)$ is not algebraic, hence nonzero in the Brauer group after any algebraically closed extension of the residue field (as in the proof of Corollary \ref{cor:galois-nonzero-application-0}).
\end{proof}

\begin{rmk}
In some cases one can use an additional application of Proposition \ref{prop:galois-nonzero} to show that the isogeny condition of \ref{defn:suitable}\ref{item:gnaB} holds, for example if there is a valuation on a subfield of the \tit{residue field} of $v$ with respect to which both $\ol{\lambda_1}$ and $\ol{\lambda_2}$ have positive valuation.
\end{rmk}

\section{Torsion motives}
\label{sec:torsion-motives}

\noindent
Continue to assume that $k$ is algebraically closed, and let $p$ denote the exponential characteristic of $k$.
For the definition of a Chow motive, see e.g.\ \cite[Ch.\ 3--4]{andre}.
For a smooth projective variety $X$ and a ring $\Lambda$, we write $\fh(X)_\Lambda$ for the Chow motive of $X$ with coefficients in $\Lambda$, or simply $\fh(X)$ if $\Lambda=\Z$.
We denote by $\mbf1\ceq\fh(\Spec(k))_\Lambda$ the unit object.
We will use the usual ``contravariant'' normalization of the category of Chow motives, i.e.\ the functor from smooth projective varieties to Chow motives to be contravariant and $\fh(\mbb P^1)\cong\mbf1\oplus\mbf1(-1)$.

\begin{defn}
A Chow motive $M$ with coefficients in $\Z[1/a]$ is \tit{torsion} if $(\End(M),+)$ is a torsion group.
If $t\cdot\End(M)=0$ for some positive integer $t$, we say that $M$ is $t$-torsion, or simply that $t\cdot M=0$.
We call the smallest such $t$ the \tit{torsion order} of $M$.
A torsion motive $M$ always has a finite torsion order, namely, the order of $\id_M$ in $(\End(M),+)$.
\end{defn}

\noindent
In the remainder of this section, we explain that two well-known facts hold in the setting of torsion motives.
The first concerns torsion in $\CH^2$, following Merkurjev--Suslin (\cite[\S18]{ms}), and the second concerns the relationship between unramified cohomology and algebraicity of cohomology classes, following Colliot-Th\'el\`ene--Voisin (\cite{ct-v}).
We remark that, for the purposes of this paper, we could make do with citing statements already in the literature.
However, the ``motivic'' Propositions \ref{prop:bloch-kato} and \ref{prop:ur} will be useful in future work.

\begin{prop}
\label{prop:bloch-kato}
Let $X$ be a smooth projective variety over $k$, let $M$ be a torsion summand of $\fh(X)_{\Z[1/a]}$, and let $\ell\neq p$ be a prime.
Then the cycle-class map $\cl_{M,\ell}^i\cln\CH^i(M)[\ell^\oo]\to H^{2i}(M,\Z_\ell(i))$ is bijective for $i=1$ and injective for $i=2$.
\end{prop}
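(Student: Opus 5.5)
Since $M$ is a direct summand of $\fh(X)_{\Z[1/a]}$, cut out by an idempotent correspondence $e\in\CH^{d}(X\times X)_{\Z[1/a]}$ ($d=\dim X$), we have $\CH^i(M)=e_*\CH^i(X)_{\Z[1/a]}$ and $H^{2i}(M,\Z_\ell(i))=e_*H^{2i}(X,\Z_\ell(i))$ (for $\ell\nmid a$; if $\ell\mid a$ both sides vanish, since $\ell$ is then invertible and $M$ is torsion). The cycle class map commutes with the action of correspondences, so $\cl^i_{M,\ell}$ is the restriction of $\cl^i_{X,\ell}$ to the $e$-images. The plan is therefore to reduce to the classical statements for $X$ and use the torsion hypothesis to remove the "divisible part" that usually obstructs bijectivity. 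Let $t$ be the torsion order of $M$, so $t\cdot e=0$ in $\End(M)$. In particular $\CH^i(M)$ and $H^{2i}(M,\Z_\ell(i))$ are killed by $t$, so $\CH^i(M)[\ell^\oo]=\CH^i(M)\otimes\Z_\ell$ is the $\ell$-primary part of $\CH^i(M)$.

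For $i=2$ I will use Merkurjev--Suslin / Colliot-Th\'el\`ene--Sansuc--Soul\'e: the map $\CH^2(X)[\ell^\oo]\to H^3(X,\Q_\ell/\Z_\ell(2))$ (Bloch's map) is injective, and it is compatible with correspondences. Its composition with the Bockstein $H^3(X,\Q_\ell/\Z_\ell(2))\to H^4(X,\Z_\ell(2))_\tors$ is the cycle class map. Apply $e$: $e_*H^3(X,\Q_\ell/\Z_\ell(2))=H^3(M,\Q_\ell/\Z_\ell(2))$, and the Bockstein sequence
\[
H^3(M,\Z_\ell(2))\otimes\Q_\ell/\Z_\ell\to H^3(M,\Q_\ell/\Z_\ell(2))\to H^4(M,\Z_\ell(2))
\]
is obtained from that of $X$ by applying the idempotent, which stays exact. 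The first term is divisible and killed by $t$, hence zero. So the Bockstein is injective on the $M$-part. Given $z\in\CH^2(M)[\ell^\oo]$ with $\cl(z)=0$, Bloch's map sends $z$ into the kernel of the Bockstein, so it is zero, and then $z=0$ by injectivity of Bloch's map for $X$ (since $z=e_*z$ is an element of $\CH^2(X)[\ell^\oo]$).

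For $i=1$, use the Kummer sequence: $0\to\Pic(X)\otimes\Z/\ell^n\to H^2(X,\mu_{\ell^n})\to\Br(X)[\ell^n]\to0$. Passing to the limit and applying $e$ gives an exact sequence $0\to\CH^1(M)\otimes\Z_\ell\to H^2(M,\Z_\ell(1))\to T_\ell(e_*\Br(X))$. The last term is a Tate module of a group killed by $t$, so it vanishes. The main check is that $\varprojlim(\Pic(X)\otimes\Z/\ell^n)$ projected by $e$ equals $\CH^1(M)\otimes\Z_\ell$; this holds because $e_*\Pic(X)$ is killed by $t$, so the tensor products stabilize at $n\ge v_\ell(t)$ and no $\lim^1$ or $\Pic^0$-divisibility issues arise (the divisible $\Pic^0$ part is killed by $e$, being divisible and then $t$-torsion). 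This gives bijectivity onto $\CH^1(M)[\ell^\oo]$.

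The main obstacle I expect is bookkeeping: verifying that Bloch's map and the Kummer/Bockstein sequences are functorial for arbitrary correspondences with $\Z[1/a]$-coefficients, not just for morphisms. For this I would cite the compatibility of Bloch's map with correspondences (as in Merkurjev--Suslin \S18 or Colliot-Th\'el\`ene--Sansuc--Soul\'e). If that is awkward, the fallback is to phrase everything via Bloch--Kato (Merkurjev--Suslin) for $\CH^2$ with finite coefficients, i.e. $H^3_{\mathrm{Zar}}(X,\mathscr H^1)$-descriptions, which are manifestly functorial for correspondences.
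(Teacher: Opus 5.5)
Your proof is correct, and its overall strategy matches the paper's. For $i=1$ it is essentially the paper's argument. Both use the Kummer sequence and the vanishing of the $M$-part of the torsion-free group $T_\ell\Br(X)$. The only difference is how you get injectivity: you read it off from $\CH^1(M)[\ell^\infty]\cong\varprojlim_n\CH^1(M)/\ell^n\hookrightarrow H^2(M,\Z_\ell(1))$, using the finite torsion order. The paper instead first shows that $\CH^1(M)\to\NS(X)$ is injective, because $\Pic^\circ(X)$ is divisible.

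For $i=2$ the mechanism is also the same. It is Merkurjev--Suslin injectivity into $H^3(-,\Q_\ell/\Z_\ell(2))$, plus the observation that the divisible part vanishes for torsion $M$: you write this part as $H^3(M,\Z_\ell(2))\otimes\Q_\ell/\Z_\ell$, and the paper writes it as $H^3(M,\Q_\ell(2))=0$. The packaging, however, differs. The paper works in Voevodsky's motivic cohomology. Beilinson--Lichtenbaum gives $H^{3,2}_\mot(X,\Q/\Z_{\gen\ell})\hookrightarrow H^3(X,\Q_\ell/\Z_\ell(2))$, and the $\ell$-primary torsion of $\CH^2$ appears only as a quotient of $H^{3,2}_\mot(X,\Q/\Z_{\gen\ell})$. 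Compatibility with correspondences then comes for free, because Chow motives map into $\DM$.

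Your route uses a stronger classical input: that Bloch's map $\lambda^2$ is defined and injective on $\CH^2(X)[\ell^\infty]$ itself. Over an algebraically closed field, this builds in the vanishing of $H^1(X,\mathcal K_2)\otimes\Q_\ell/\Z_\ell$, which rests on a weight argument of Colliot-Th\'el\`ene--Sansuc--Soul\'e. You also have to cite separately that $\lambda^2$ commutes with correspondences. Since $\Gamma_*$ passes through $\CH^{d+2}(X\times X)$, this requires Bloch's maps in all codimensions and their compatibility with products and proper pushforward; Bloch's original paper supplies this.

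So your version stays classical and avoids Voevodsky's machinery. The paper's version needs less about $X$ (no vanishing of the divisible $K_2$-part) and gets functoriality automatically. Both are valid.
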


\begin{proof}
By assumption, $\CH^1(M)$ is a torsion summand of $\Pic(X)$ of finite torsion order.
Since $\Pic^\circ(X)=\Ker(\CH^1(X)\to \NS(X))$ is the group of points of an Abelian variety over an algebraically closed field, it has no direct summand of finite torsion order, so the map $\CH^1(M)\to\NS(X)$ is injective.
Thus the map $\cl_{M,\ell}^1\cln\CH^1(M)[\ell^\oo]\to H^2(M,\Z_\ell(1))$ is injective as well.
Now the Kummer sequence shows that $\Coker(\cl_{M,\ell}^1)$ embeds into $T_\ell(\Br(X))$, the $\ell$-adic Tate module of the Brauer group of $X$.
But $T_\ell(\Br(X))$ is torsion-free, so we must have $\Coker(\cl_{M,\ell}^1)=0$ since $M$ is torsion.

For the case $i=2$, we will use Voevodsky's Zariski (or, equivalently, Nisnevich) motivic cohomology $H_\mot^{i,j}$.
There is a functorial map $H_\mot^{i,j}(X,\Z/\ell^n)\to H^i(X,\mu_{\ell^n}^{\otimes j})$ which, by the Beilinson--Lichtenbaum conjecture (now a theorem of Voevodsky), is injective if $i=j+1$ (and an isomorphism if $i\leq j$); see \cite[Corollary 1.31]{hw}.
Letting $\Z_{\gen\ell}$ denote the localization of $\Z$ at the prime ideal $\ell\Z$, there is a map $H_\mot^{i,j}(X,\Q/\Z_{\gen\ell})\to H^i(X,\Q_\ell/\Z_\ell(j))$ because Zariski cohomology commutes with direct limits.
Also, there is a map $f\cln H_\mot^{i,j}(X,\Z_{\gen\ell})\to H^i(X,\Z_\ell(j))$ which is the inverse limit of compositions of the form $H_\mot^{i,j}(X,\Z_{\gen\ell})\to H_\mot^{i,j}(X,\Z/\ell^n)\to H^i(X,\mu_{\ell^n}^{\otimes j})$.
We get a commutative diagram
\begin{equation}
\label{eq:m2e}
\begin{tikzcd}
&
H_\mot^{3,2}(X,\Q/\Z_{\gen\ell})\arrow[d,hook]\arrow[r]&
H_\mot^{4,2}(X,\Z_{\gen\ell})_\tors\arrow[r]\arrow[d,"f"]&0\\
H^3(X,\Q_\ell(2))\arrow[r]&
H^3(X,\Q_\ell/\Z_\ell(2))\arrow[r]&
H^4(X,\Z_\ell(2))
\end{tikzcd}
\end{equation}
with exact rows coming from the short exact sequences $0\to\Z_{\gen\ell}\to\Q\to\Q/\Z_{\gen\ell}\to0$ and $0\to\Z_\ell\to\Q_\ell\to\Q_\ell/\Z_\ell\to0$, respectively; the downward arrow on the left is injective by the Beilinson--Lichtenbaum conjecture.
Moreover, $f$ is the cycle-class map under the identification $H_\mr{mot}^{2i,i}(X,\Z)\cong\CH^i(X)$ of \cite[Corollary 2]{voevodsky}.

The functor from smooth projective varieties to Voevodsky's triangulated category of motives factors through the category of Chow motives (\cite[Proposition 2.1.4]{voevodsky}).
In particular, we obtain the diagram \eqref{eq:m2e} with $X$ replaced by $M$ everywhere.
The claim then follows from the fact that $H^3(M,\Q_\ell(2))=0$ because $M$ is torsion.
\end{proof}

\begin{prop}
\label{prop:ur}
Let $M$ be a torsion summand of $\fh(X)_{\Z[1/a]}$, where $X$ is a connected smooth projective variety over $k$.
Then, for any prime $\ell\neq p$ and any $n\geq1$, the boundary map $\partial\cln H^3(M,\mu_{\ell^n}^{\otimes2})\to H^4(M,\Z_\ell(2))$ induces an injective map
\begin{equation}
\label{eq:ur}
\Img\big(
H^3(M,\mu_{\ell^n}^{\otimes2})
\to
H^3(k(X),\mu_{\ell^n}^{\otimes2})
\big)
\into
\Coker(\mr{cl}_{M,\ell}^2),
\end{equation}
where $\mr{cl}_{M,\ell}^2$ denotes the cycle-class map $\CH^2(M)\to H^4(M,\Z_\ell(2))$.
\end{prop}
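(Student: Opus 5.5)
The plan is to follow Colliot-Th\'el\`ene--Voisin, with the Beilinson--Lichtenbaum input packaged by the diagram \eqref{eq:m2e} for $M$ (constructed in the proof of Proposition \ref{prop:bloch-kato}). Since $H^3(M,\Q_\ell(2))=0$, everything reduces to a diagram chase in that diagram, combined with one fact about fields.

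\emph{Step 1 (setting up the map).} Let $x\in H^3(M,\mu_{\ell^n}^{\otimes2})\subseteq H^3(X,\mu_{\ell^n}^{\otimes2})$, where $M$ is cut out by a projector acting on cohomology. Write $\bar x$ for its image in $H^3(k(X),\mu_{\ell^n}^{\otimes2})$. I define \eqref{eq:ur} by sending $\bar x$ to the class of $\partial x$ in $\Coker(\cl^2_{M,\ell})$. For this to be well defined and injective, it suffices to prove one statement: if $\partial x=\cl^2_{M,\ell}(z)$ for some $z\in\CH^2(M)$, then $\bar x=0$. This also shows that $\bar x$ depends only on the class of $\partial x$ modulo algebraic classes, once we apply it to differences of lifts.

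\emph{Step 2 (chase in \eqref{eq:m2e}).} Since $M$ is torsion, $\CH^2(M)$ is killed by the torsion order of $M$. After replacing $z$ by its $\ell$-primary component (the prime-to-$\ell$ part maps to $0$ in $H^4(M,\Z_\ell(2))$), we may assume $z\in\CH^2(M)[\ell^\oo]=H^{4,2}_\mot(M,\Z_{\gen\ell})_\tors$. By the top row of \eqref{eq:m2e} for $M$, $z$ lifts to some $y\in H^{3,2}_\mot(M,\Q/\Z_{\gen\ell})$. Let $y_{\mr{et}}\in H^3(M,\Q_\ell/\Z_\ell(2))$ be its image, and let $x'$ be the image of $x$ under $\mu_{\ell^n}^{\otimes2}\into\Q_\ell/\Z_\ell(2)$. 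Compatibility of the Bockstein maps gives $\partial(x'-y_{\mr{et}})=\partial x-f(z)=0$. The bottom row is exact, so $x'-y_{\mr{et}}$ lies in the image of $H^3(M,\Q_\ell(2))=0$; hence $x'=y_{\mr{et}}$.

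\emph{Step 3 (vanishing at the generic point).} The class $y_{\mr{et}}$ comes from motivic cohomology of weight $2$ in degree $3$. Its restriction to $k(X)$ therefore factors through $H^{3,2}_\mot(k(X),\Q/\Z_{\gen\ell})$, which vanishes: a field has no motivic cohomology in degree greater than the weight, since $H^{i,j}_\mot(K)=0$ for $i>j$. So $x'$ vanishes in $H^3(k(X),\Q_\ell/\Z_\ell(2))$. To return to $\mu_{\ell^n}^{\otimes2}$-coefficients, I use the exact sequence
\[
H^2(k(X),\Q_\ell/\Z_\ell(2))\xrightarrow{\ell^n}H^2(k(X),\Q_\ell/\Z_\ell(2))\to H^3(k(X),\mu_{\ell^n}^{\otimes2})\to H^3(k(X),\Q_\ell/\Z_\ell(2)).
\]
By Merkurjev--Suslin, $H^2(K,\Q_\ell/\Z_\ell(2))$ is divisible for every field $K$ of characteristic $\neq\ell$ (it is a quotient of $K_2(K)\otimes\Q_\ell/\Z_\ell$). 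Hence the first map is surjective and the last map is injective, so $\bar x=0$.

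\emph{Main obstacle.} I expect the delicate point to be making Steps 2--3 honest at the level of the summand $M$. One must check that the maps $H^{3,2}_\mot(-)\to H^3(-)$, the Bocksteins, and restriction to the generic point are all compatible with the action of correspondences. Restriction to $k(X)$ is \emph{not} functorial for correspondences. So I would run the chase on $X$ itself: apply the projector $\pi$ to $y$ (which is legitimate in $\DM$), and note that $\pi_*y$ still lifts $z$ because $\pi_*z=z$. Only at the end do I restrict the resulting class on $X$ to $k(X)$, where the vanishing uses nothing about $M$. Beilinson--Lichtenbaum enters only through the existence of the diagram \eqref{eq:m2e} for $M$ established in Proposition \ref{prop:bloch-kato}.
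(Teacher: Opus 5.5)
Your chase in Steps 2--3 is correct, but Step 1 reverses an implication, so half of the statement is left unproved. For $\partial$ to \emph{induce} a map from $\Img\big(H^3(M,\mu_{\ell^n}^{\otimes2})\to H^3(k(X),\mu_{\ell^n}^{\otimes2})\big)$ to $\Coker(\cl^2_{M,\ell})$, you must show that $\bar x=0$ implies $\partial x\in\cl^2_{M,\ell}(\CH^2(M))$. You prove the converse: $\partial x$ algebraic implies $\bar x=0$. Applied to differences, this shows that the class of $\partial x$ determines $\bar x$. That gives a well-defined surjection from the image of $\partial$ in $\Coker(\cl^2_{M,\ell})$ onto the image in $H^3(k(X),\mu_{\ell^n}^{\otimes2})$. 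It does not give a map in the direction claimed. Classes that vanish at the generic point are never treated, and handling them needs a coniveau input that your argument lacks.

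The missing direction is quick in your framework. Beilinson--Lichtenbaum in the form $\Z/\ell^n(2)\simeq\tau_{\le2}R\alpha_*\mu_{\ell^n}^{\otimes2}$ gives an exact sequence $H^{3,2}_\mot(X,\Z/\ell^n)\to H^3(X,\mu_{\ell^n}^{\otimes2})\to H^0(X,\mathcal H^3(\mu_{\ell^n}^{\otimes2}))$. Bloch--Ogus gives $H^0(X,\mathcal H^3(\mu_{\ell^n}^{\otimes2}))\into H^3(k(X),\mu_{\ell^n}^{\otimes2})$. So if $\bar x=0$, then $x$ lifts to some $w\in H^{3,2}_\mot(X,\Z/\ell^n)$. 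Hence $\partial x=f(\delta w)$, where $\delta w\in\CH^2(X)[\ell^n]$ is the motivic Bockstein. Then $\partial x=\pi_*\partial x=f(\pi_*\delta w)$ is algebraic on $M$. In the paper this direction is built into the Colliot-Th\'el\`ene--Voisin/Schreieder diagram \eqref{eq:ur-diagram}, through the map $H^3_\ur(X,\mu_{\ell^n}^{\otimes2})\to\Coker(\cl^2_{X,\ell})$, which is compatible with $\partial$.

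With this added, your injectivity proof is a genuinely different route from the paper's. The paper makes $H^3_\ur$ a functor on Chow motives and then uses Kahn's lemma that $H^3_\ur(M,\Z_\ell(2))$ is torsion-free, hence zero. You instead reuse \eqref{eq:m2e}, the vanishing of $H^{3,2}_\mot$ of a field, and Merkurjev--Suslin divisibility of $H^2(k(X),\Q_\ell/\Z_\ell(2))$. You also correctly sidestep the non-functoriality of restriction to $k(X)$ by running the chase on $X$ with $\pi_*y$. This avoids unramified cohomology of motives entirely. Note that the direction you did prove is exactly the one used in the proof of Theorem \ref{thm:enriques-thm-2}.
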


\begin{proof}
The proof relies on the theory of unramified cohomology, which, according to \cite{schreieder-refined}, may be defined as follows (though the specifics of the definition are not important here).
For $A$ being either $\mu_{\ell^n}^{\otimes j}$ or $\Z_\ell(j)$, we define $H^i(F_cX,A)$ to be the colimit of $H^i(U,A)$ as $U$ ranges over open subsets of $X$ of codimension $>c$, and then define $H_\ur^i(X,A)$ to be the image of $H^i(F_1X,A)\to H^i(F_0X,A)$.\footnote{
In general, Schreieder defines unramified cohomology in terms of a Borel--Moore homology theory; since $X$ is smooth, this agrees with the definition above by Poincar\'e duality (\cite[\S1.4]{schreieder-refined}).
An argument with the Gysin sequence shows that it also agrees with the usual definitions of unramified cohomology (\cite[equation (1.4)]{schreieder-refined}).
}
We have maps $H^i(X,A) \to H_\ur^i(X,A)$ and, for $X$ integral, an injection $H_\ur^i(X,\mu_{\ell^n}^{\otimes j})\into H^i(k(X),\mu_{\ell^n}^{\otimes j})$ (indeed, the natural map $H^i(F_0X,A)\to H^i(k(X),\mu_{\ell^n}^{\otimes j})$ is an isomorphism).

The link between unramified cohomology and non-algebraicity of cohomology classes is the existence of a commutative diagram with exact rows of the form
\begin{equation}
\label{eq:ur-diagram}
\begin{tikzcd}
&
H^3(X,\mu_{\ell^n}^{\otimes2})
\arrow[d]\arrow[r,"\partial"]
&
H^4(X,\Z_\ell(2))
\arrow[d]
\\
H_\ur^3(X,\Z_\ell(2))
\arrow[r]
&
H_\ur^3(X,\mu_{\ell^n}^{\otimes2})
\arrow[r]&
\Coker(\cl_{X,\ell}^2)
\arrow[r]&
0,
\end{tikzcd}
\end{equation}
where $\partial$ is the usual boundary map in $\ell$-adic cohomology.
This was first proven by Colliot-Th\'el\`ene and Voisin (for $k=\C$ and Betti cohomology; see \cite[Th\'eor\`eme 3.7]{ct-v}), and an ``elementary'' argument is explained in \cite[\S3]{schreieder-refined}.
Note that Schreieder works with the exact sequence $0\to\Z\to\Q\to\Q/\Z\to0$ in Betti cohomology, but the same argument works for the exact sequence $0\to\Z_\ell(2)\to\Z_\ell(2)\to\mu_{\ell^n}^{\otimes2}\to0$ in $\ell$-adic cohomology.
In particular, the map \eqref{eq:ur} exists.

A consequence of Bloch--Ogus's proof of the Gersten conjecture for $\ell$-adic cohomology (\cite{bloch-ogus}) is that the assignment $Y\mapsto H_\ur^i(Y,A)$ for smooth projective $Y$ extends to a functor on the category of Chow motives; see \cite[Corollary 1.7]{schreieder-moving} for a concrete (and more general) statement.
Therefore, we obtain the diagram \eqref{eq:ur-diagram} with $X$ replaced by $M$ everywhere.
By \cite[Lemme 3.12]{kahn-classes} (see also \cite[Th\'eor\`eme 3.1]{ct-v}), $H_\ur^3(M,\Z_\ell(2))$ is torsion-free, hence trivial (because $M$ is torsion).
This proves the injectivity of \eqref{eq:ur}.
\end{proof}

\section{Torsion in the motive of a surface}
\label{sec:surface-motives}

\noindent
Continue to assume that $k$ is algebraically closed of exponential characteristic $p$.
In this section, we draw from Proposition \ref{prop:bloch-kato} numerous consequences for torsion summands of the motives of surfaces.
Then we explain the only known source of such torsion motives, namely surfaces admitting a decomposition of the diagonal (Proposition \ref{prop:dod}).\footnote{
In \cite{vishik-torsionspaces}, Vishik constructs many other torsion objects in the stable motivic homotopy category, but these do not give rise to Chow motives, as noted in \cite[Remark 2.2]{vishik-torsionspaces}.
}

If $M$ is the summand of $\fh(X)$ cut out by a correspondence $\epsilon$, let us denote by $M^\mr{t}$ the summand cut out by the transposed correspondence $\epsilon^\mr{t}$ (not to be confused with $M^\vee\ceq M^\mr{t}({\dim(X)})$, the usual duality functor on Chow motives.)

\begin{thing}
\label{thing:torsion-in-surface}
Let $S$ be a smooth projective surface over $k$, and let $M$ be a torsion summand of $\fh(S)_{\Z[1/a]}$.
We will assume that the torsion order of $M$ is a power of a prime $\ell$ different from $p$.\footnote{
Torsion motives have canonical ``primary decompositions'', so we can take the ``$\ell$-part'' of any torsion summand of $\fh(S)_{\Z[1/a]}$.
}
By Roitman's theorem (\cite{roitman}), $\CH_0(M)=0$.
Thus $\CH^i(M)=0$ for $i\neq 1$, and the map $\CH^1(M)\to H^2(M,\Z_\ell)$ is an isomorphism by Proposition \ref{prop:bloch-kato}.
Also, $H^i(M,\Z_\ell)=0$ for $i\neq2,3$.
Poincar\'e duality induces isomorphisms $H^2(M,\Z_\ell)^\vee\cong H^3(M^\mr{t},\Z_\ell)$ and $H^3(M,\Z_\ell)^\vee\cong H^2(M^{\mr t},\Z_\ell)$, where $(-)^\vee\ceq\Hom(-,\Q/\Z)$; see \eqref{thing:cohfin}--\eqref{thing:motive-pairing} below.
\end{thing}

\begin{thing}
\label{thing:tis-hom}
Continuing in the setting of \eqref{thing:torsion-in-surface}, let $S'$ be another smooth projective surface over $k$, and let $M'$ be a torsion summand of $\fh(S')_{\Z[1/a]}$, and assume that the torsion order of $M'$ is also a power of $\ell$.
Then Proposition \ref{prop:bloch-kato} and the K\"unneth theorem for $\ell$-adic cohomology (see Proposition \ref{prop:good-kunneth} below) yield the following commutative diagram with exact rows, where we have abbreviated $H^i(-,\Z_\ell)$ by $H^i(-)$:
\[
\begin{tikzcd}
[column sep=small]
0\arrow[r]&
\CH^1(M^{\mr t})\otimes\CH^1(M')\arrow[r]\arrow[d,"\sim"']&
\CH^2(M^{\mr t}\otimes M')\arrow[r]\arrow[d,hook]&
\tn{Coker}\arrow[r]\arrow[d,hook]&
0\\
0\arrow[r]&
H^2(M^{\mr t})\otimes H^2(M')\arrow[r]&
H^4(M^{\mr t}\otimes M')\arrow[r]&
\displaystyle\bigoplus_{i=2,3}\Hom(H^i(M),H^i(M'))\arrow[r]&
0.
\end{tikzcd}
\]
Note that $\CH^2(M^{\mr t}\otimes M')=\Hom(M,M')$.

Moreover, the image of $\Hom(M,M')$ in $\bigoplus_i\Hom(H^i(M),H^i(M'))$ is dual to the image of $\Hom(M'^{\mr t},M^{\mr t})$ in $\bigoplus_i\Hom(H^i(M'^{\mr t}),H^i(M^{\mr t}))$ under Poincar\'e duality.
More precisely, if $\phi\cln M\to M'$ acts as $(f_2,f_3)$ on cohomology, then the transpose $\phi^{\mr t}$ defines a morphism $M'^{\mr t}\to M^{\mr t}$, which acts as $(f_3^\vee,f_2^\vee)$ on cohomology.
\end{thing}

\begin{lem}
\label{lem:action-on-coh}
Let $M$ and $M'$ be as in \tn(\ref{thing:tis-hom}\tn).
\begin{enumerate}
\item\label{item:aocTO}
The torsion order $M$ equals that of $H^*(M,\Z_\ell)$.
\item\label{item:aocAIA}
Suppose $\phi\cln M\to M$ induces an automorphism of $H^*(M,\Z_\ell)$.
Then $\phi$ is an automorphism.
\item\label{item:aocIII}
Suppose $\phi\cln M\to M'$ induces an isomorphism $H^*(M,\Z_\ell)\to H^*(M',\Z_\ell)$.
Assume there exists some morphism $M'\to M$ which induces an isomorphism $H^*(M',\Z_\ell)\to H^*(M,\Z_\ell)$.
Then $\phi$ is an isomorphism.
\item\label{item:aocIIIS}
Assume that $M^{\mr t}\cong M$ and $M'^{\mr t}\cong M'$, and suppose $\phi\cln M\to M'$ induces an isomorphism $H^*(M,\Z_\ell)\to H^*(M',\Z_\ell)$.
Then $\phi$ is an isomorphism.
\item\label{item:aocOI}
Suppose $\epsilon_1,\dots,\epsilon_n\cln M\to M$ induce orthogonal idempotents in $\End(H^*(M,\Z_\ell))$.
Then there exist $\epsilon'_1,\dots,\epsilon'_n\cln M\to M$ which are themselves orthogonal idempotents in $\End(M)$ such that $\epsilon'_i$ and $\epsilon_i$ induce the same endomorphism of $H^*(M,\Z_\ell)$ for each $i$.
\end{enumerate}
\end{lem}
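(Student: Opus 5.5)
The whole proof will rest on the diagram of \eqref{thing:tis-hom}. Specialized to $M'=M$ (and to $M,M'$ in general), it says the following: the map $\rho\cln\End(M)\to\End(H^*(M,\Z_\ell))$ has kernel $K$ contained in the subgroup $\CH^1(M^{\mr t})\otimes\CH^1(M')\cong H^2(M^{\mr t})\otimes H^2(M')$. The key observation I would establish first is that $K$ is a square-zero ideal in a suitable sense. Take $\phi,\psi\in K$, say $\phi$ is a sum of products $a\otimes b$ with $a\in\CH^1(M^{\mr t})$ and $b\in\CH^1(M)$. Such a correspondence factors as
\[
M\to\mbf1(-1)\to M,
\]
and the composite $\psi\circ\phi$ factors through $\mbf1(-1)\to M\xrightarrow{\psi}M$. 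This composite is just $\psi_*(b)\in\CH^1(M)$. Since $\CH^1(M)\to H^2(M)$ is injective (Proposition \ref{prop:bloch-kato}) and $\psi$ acts by zero on $H^2$, we get $\psi_*b=0$. Hence $K\cdot K=0$, and similarly $K'\circ K=0$ for kernels between different motives. More generally, for any $\phi$ and any $\kappa\in K$, the composite $\kappa\circ\phi$ lies in $K$, since $\rho$ is multiplicative.

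For \ref{item:aocTO}: the torsion order of $H^*$ divides that of $M$, since $\rho(\id)=\id$. Conversely, let $t$ be the torsion order of $H^*$. Then $t\cdot\id_M\in K$, so $t^2\cdot\id_M=(t\id)^2=0$. That alone is not enough, so I would refine it. Write $t\cdot\id_M=\sum a_j\otimes b_j$. Applying this to $b\in\CH^1(M)$ gives $t b$, which is $0$ because $\CH^1(M)\cong H^2(M)$ is killed by $t$. On the other hand, it equals $\sum\gen{a_j,b}b_j$. I would then argue that an element of $\CH^1(M^{\mr t})\otimes\CH^1(M)\cong H^2(M^{\mr t})\otimes H^2(M)$ whose induced map $H^2(M)^\vee\cong H^3(M^{\mr t})\to\dots$ vanishes must itself be zero. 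This needs the precise form of the injection in the bottom row: the map $H^2\otimes H^2\to H^4$ comes with a $\mr{Tor}$ term, and one must check how $H^2(M^{\mr t})\otimes H^2(M)$ is detected. I expect this bookkeeping to be the main obstacle, and I would try to handle it via the Künneth description (Proposition \ref{prop:good-kunneth}). As a fallback, if $K$ is only shown to be nilpotent, I still get that the torsion order is at most $t^2$. I would then look for an identification showing that elements of $K$ are killed by $t$: since $K\cong H^2(M^{\mr t})\otimes H^2(M)$ and both factors are killed by $t$, $K$ is killed by $t$. Then $t\id\in K$ gives $t\cdot(t\id)=0$, i.e.\ $t^2$, which is not sharp. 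To get exactly $t$, one uses $\id\circ(t\id)=t\id$ together with the fact that $t\id$ is $\sum a_j\otimes b_j$ acting trivially on $\CH^1$, which forces it to vanish by the injectivity of the Künneth pairing.

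For \ref{item:aocAIA}: let $\psi$ lift $\rho(\phi)^{-1}$. Such a lift exists because $\rho(\phi)^{-1}$ is a polynomial in $\rho(\phi)$, as $\End(H^*)$ is finite. Then $\psi\phi=1-\kappa$ with $\kappa\in K$, and $(1-\kappa)^{-1}=1+\kappa$ since $\kappa^2=0$. The same works on the other side, so $\phi$ is invertible. For \ref{item:aocIII}: with $\psi\cln M'\to M$ inducing an isomorphism, $\psi\phi$ and $\phi\psi$ are automorphisms by \ref{item:aocAIA}, so $\phi$ has both a left and a right inverse. Part \ref{item:aocIIIS} follows by applying \ref{item:aocIII} to the transpose $\phi^{\mr t}\cln M'\to M$ composed with the given isomorphisms. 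By \eqref{thing:tis-hom}, this composite induces $(f_3^\vee,f_2^\vee)$ up to the identifications, which is an isomorphism.

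For \ref{item:aocOI}: I would use standard idempotent lifting along the surjection onto the image of $\rho$, whose kernel $K$ is nilpotent ($K^2=0$). Lift $\epsilon_1$ to an idempotent $e_1=3\epsilon_1^2-2\epsilon_1^3$, then pass to the corner ring $(1-e_1)\End(M)(1-e_1)$ and proceed inductively, as in the usual proof for nil ideals.
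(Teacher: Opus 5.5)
Your arguments for (b)--(e) are sound and follow the paper's route. Your proof that $K\circ K=0$ works: you factor through $\mbf1(-1)$ and use injectivity of $\CH^1(M)\to H^2(M,\Z_\ell)$, which is a valid substitute for the paper's ``one easily checks''. Your inductive idempotent lifting via $3\epsilon^2-2\epsilon^3$ and corner rings is the content of the cited lemma of Jannsen.

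\textbf{The gap is in (a).} You only establish $t^2\cdot\id_M=0$, and your proposed sharpening to $t$ is false. Every element of $K$ acts trivially on $\CH^1(M)$: your own argument for $K\circ K=0$ shows $\kappa\circ c=0$ for $\kappa\in K$ and $c\in\CH^1(M)=\Hom(\mbf1(-1),M)$. By definition, every element of $K$ also acts trivially on $H^*(M,\Z_\ell)$. So ``$t\cdot\id_M$ acts trivially on $\CH^1$, hence vanishes'' would force $K=0$. But for an Enriques surface $K\cong\Z/2\neq0$ (see \eqref{thing:hom}). No ring-theoretic manipulation of the facts $K^2=0$, $tK=0$, $t\cdot\End(M)\subseteq K$ can close the gap either. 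The ring $\Z/t^2$ with $K=t\Z/t^2$ satisfies all three, yet $t\cdot 1\neq0$.

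\textbf{The missing input is the splitting of the K\"unneth sequence} in the bottom row of \eqref{thing:tis-hom}. It splits, non-canonically, as the algebraic K\"unneth sequence over the PID $\Z_\ell$ does, and the splitting passes to summands cut out by correspondences. Because it splits, $H^4(M^{\mr t}\otimes M,\Z_\ell)$ is killed by the lcm of the exponents of $H^2(M^{\mr t})\otimes H^2(M)$ and $\bigoplus_i\Hom(H^i(M),H^i(M))$, not merely by their product. Both of these are killed by $t$, using $H^2(M^{\mr t})\cong H^3(M)^\vee$. Hence $\cl(t\cdot\id_M)=t\cdot\cl(\id_M)=0$. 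Injectivity of $\CH^2(M^{\mr t}\otimes M)\to H^4(M^{\mr t}\otimes M,\Z_\ell)$ (Proposition \ref{prop:bloch-kato}) then gives $t\cdot\id_M=0$; this is the paper's argument.

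You can also phrase the fix in your own language. Since $t\cdot\id_M\in K$, and $K\cong H^2(M^{\mr t})\otimes H^2(M)$ injects into $H^4(M^{\mr t}\otimes M,\Z_\ell)$, it suffices to know $t\cdot\cl(\id_M)=0$. That is exactly what the splitting provides, and without it one only gets $t^2$.
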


\begin{proof}
\ref{item:aocTO}.
Obviously, the torsion order of $M$ is a multiple of that of $H^*(M,\Z_\ell)$.
On the other hand, the torsion order of $M$ divides that of $H^4(M^{\mr t}\otimes M,\Z_\ell)$ by Proposition \ref{prop:bloch-kato}.
Since the K\"unneth sequence splits, the latter has torsion order equal to that of $H^*(M,\Z_\ell)$ by the isomorphism $H^2(M^{\mr t},\Z_\ell)\cong H^3(M,\Z_\ell)^\vee$ of Poincar\'e duality.
This proves \ref{item:aocTO}.

For any torsion summands $N$ and $N'$ of the motives of smooth projective surfaces, let $J(N,N')\subseteq\Hom(N,N')$ denote the image of $\CH^1(N^*)\otimes\CH^1(N')$.
Now let $N''$ be a third, and let $\phi\cln N\to N'$ and $\phi'\cln N'\to N''$.
One easily checks that $\phi'\circ\phi\in J(N,N'')$ if either $\phi\in J(N,N')$ or $\phi'\in J(N',N'')$ and that $\phi'\circ\phi=0$ if $\phi\in J(N,N')$ and $\phi'\in J(N',N'')$.

\ref{item:aocAIA}.
Since $H^*(M,\Z_\ell)$ is finite, some power $\phi^{\circ r}$ of $\phi$ acts as the identity on $H^*(M,\Z_\ell)$, i.e.\ $\phi^{\circ r}=1+\epsilon$ for some $\epsilon\in J(M,M)$. 
Since $\epsilon^{\circ2}=0$, we have $(1-\epsilon)\circ\phi^{\circ r}=\phi^{\circ r}\circ(1-\epsilon)=1$, so $\phi$ is an automorphism.

\ref{item:aocIII}.
This follows from \ref{item:aocAIA} upon considering $\phi\circ\psi$ and $\psi\circ\phi$.

\ref{item:aocIIIS}.
The given isomorphisms allow us to view $\phi^{\mr t}$ as an element of $\Hom(M',M)$, and this map acts as an isomorphism on cohomology.
So, the desired result follows from \ref{item:aocIII}.

\ref{item:aocOI}.
The kernel of $\End(M)\to\End(H^*(M,\Z_\ell))$ is $J(M,M)$, a two-sided nilpotent ideal, so this follows from \cite[Lemma 5.4]{jannsen}.
\end{proof}

\begin{rmk}
\label{rmk:aocIIISC}
We will frequently use the following special case of Lemma \ref{lem:action-on-coh}\ref{item:aocIII}:
if $S=S'$ and $H^*(M,\Z_\ell)=H^*(M',\Z_\ell)$ as summands of $H^*(S,\Z_\ell)$, then the compositions $M\to\fh(S)_{\Z[1/a]}\to M'$ and $M'\to\fh(S)_{\Z[1/a]}\to M$ are isomorphisms.
\end{rmk}

\begin{rmk}
Lemma \ref{lem:action-on-coh}\ref{item:aocIII} is false without the assumption that there exists a morphism $M'\to M$ inducing an isomorphism on cohomology, as we will show in Example \ref{eg:nsni} below. 
\end{rmk}

\noindent
The following proposition, which provides many examples of torsion motives, is essentially \cite[Proposition 3.4]{sy}, which itself is a generalization of \cite[Propositions 2.2 and 2.3]{phantom}.

\begin{prop}
\label{prop:dod}
Let $S$ be a smooth projective surface over $k$.
The fol{}lowing are equivalent:
\begin{conditions}
\item
\label{item:dDOD}
$\CH_0(S_{k'})\cong\Z$ for any algebraical{}ly closed field $k'/k$.\footnote{
See \cite[Proposition 3.1.1]{birational-I} for several other equivalent conditions of the same flavor.}
\item
\label{item:dDECOMP}
There is a decomposition $\fh(S)_{\Z[1/p]}=\mbf1\oplus\mbf1(-1)^{\rho(S)}\oplus\mbf1(-2)\oplus\ft(S)$ in which $\ft(S)$ is torsion, where $\rho(S)$ is the Picard number of $S$.
\end{conditions}
If these conditions hold, then the fol{}lowing are true about $\ft(S)$.
\begin{enumerate}
\item
\label{item:dWDS}
$\ft(S)$ is wel{}l-defined up to isomorphism and satisfies $\ft(S)^{\mr t}\cong\ft(S)$.
\item
\label{item:dINV}
We have $H^*(\ft(S),\Z_\ell)=H^*(S,\Z_\ell)_\tors$ for any prime $\ell\neq p$ and $\CH^*(\ft(S))=\CH^1(S)_\tors$.
\end{enumerate}
\end{prop}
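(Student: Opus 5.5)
For (i)$\Rightarrow$(ii) I will follow the decomposition-of-the-diagonal argument of Gorchinskiy--Orlov and \cite[Proposition 3.4]{sy}. Condition (i) applied to the generic point $k'=\overline{k(S)}$ gives, by a norm argument, $N\cdot\Delta_S=N\cdot(S\times x)+Z$ in $\CH^2(S\times S)$ with $Z$ supported on $D\times S$ for a divisor $D$. Here $N$ is the degree of the finite extension over which the zero-cycle equality descends, and after inverting $p$ we may take $N$ prime to $p$, hence essentially the torsion order of $\Pic(S)_\tors$ up to $p$-parts.

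Next I split off the algebraic parts. The projectors $\pi_0=x\times S$ and $\pi_4=S\times x$ give $\mbf1$ and $\mbf1(-2)$. Since $q(S)=0$ (implied by (i)), $\NS(S)=\Pic(S)$, and $\NS(S)/\tors$ carries a unimodular intersection form by Poincar\'e duality on $H^2(S,\Z_\ell)/\tors$ for all $\ell\ne p$. Therefore a dual basis $\{D_i\},\{D_i^\vee\}$ defines orthogonal projectors $\sum D_i\times D_i^\vee$ with image $\mbf1(-1)^{\rho}$. Orthogonality with $\pi_0,\pi_4$ may require the usual correction of the divisors by multiples of the class of a point, which is harmless. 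The complement $\ft(S)$ then satisfies: $N\cdot\id_{\ft(S)}$ factors through the motive of the resolution $\tilde D$, pushed forward to a $1$-dimensional part. Using $q=0$ and $\CH_0(S)=\Z$, that part is killed by the projector, so $N^2\cdot\id_{\ft(S)}=0$. This is the step I expect to be delicate, since one needs both the left and right decompositions (via the transpose of the diagonal) to kill $\id$. For (ii)$\Rightarrow$(i), I compute $\CH_0$ of each summand after base change to $k'$: Tate motives contribute $\Z$ only in $\mbf1(-2)$, and $\CH_0(\ft(S)_{k'})$ is torsion and prime to $p$, hence $0$ by Roitman's theorem, exactly as in \eqref{thing:torsion-in-surface}.

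For \ref{item:dWDS}, suppose $\ft$ and $\ft'$ are two complements arising from decompositions. Their cohomologies are both equal to $H^*(S,\Z_\ell)_\tors$ inside $H^*(S,\Z_\ell)$, because Tate summands contribute torsion-free cohomology and $\ft$ is torsion. Remark \ref{rmk:aocIIISC}, applied prime by prime via the primary decomposition of torsion motives, then gives $\ft\cong\ft'$. The transposed decomposition has complement $\ft(S)^{\mr t}$ and is again of the form in (ii), because the transpose of $\mbf1(-i)$ summands gives $\mbf1(-2+i)$; hence $\ft(S)^{\mr t}\cong\ft(S)$.

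For \ref{item:dINV}, the cohomology statement was just noted. For the Chow groups, Tate summands contribute torsion-free $\CH^*$, while $\CH^*(\ft(S))$ is torsion and concentrated in $\CH^1$ by \eqref{thing:torsion-in-surface}. Hence $\CH^*(\ft(S))=\CH^*(S)_\tors=\CH^1(S)_\tors$, using that $\CH^2(S)=\CH_0(S)\cong\Z$ is torsion-free and $\CH^0=\Z$.
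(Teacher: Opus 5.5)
There are genuine gaps. Your treatment of the two consequences (well-definedness, $\ft(S)^{\mr t}\cong\ft(S)$, and the cohomology and Chow groups of $\ft(S)$) matches the paper. Your Bloch--Srinivas route to (i)$\Rightarrow$(ii) is a legitimate alternative to the paper's: the paper first splits $\fh(S)_\Q$ and kills the rational complement $M$ using \cite[Lemma 2.1]{gg}, since $\CH^*(M_{k'})=0$ for all $k'$. It then reads off $\rho(S)=b_2(S)$, and the $\Z[1/p]$-complement is automatically torsion because it vanishes rationally. The first gap is that (ii)$\Rightarrow$(i) fails as written in positive characteristic. The decomposition only has $\Z[1/p]$-coefficients, so computing summand by summand gives only $\CH_0(S_{k'})\otimes\Z[1/p]\cong\Z[1/p]$. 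That is, you only learn that $\CH_0(S_{k'})_{\deg 0}$ is $p$-primary torsion, and Roitman's theorem says nothing about $p$-torsion. The paper closes this with Milne's $p$-adic Roitman theorem \cite[Theorem 0.1]{milne-roitman}, using that the Albanese is trivial: by (ii), $H^1(S,\Z_\ell)=H^1(\ft(S),\Z_\ell)$ is torsion, hence zero.

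The second gap is in (i)$\Rightarrow$(ii). Unimodularity of $\NS(S)/\tors$ over $\Z[1/p]$ does not follow from Poincar\'e duality on $H^2(S,\Z_\ell)/\tors$, because a sublattice of a unimodular lattice need not be unimodular. You need $\NS(S)\otimes\Z_\ell\to H^2(S,\Z_\ell(1))$ to be an isomorphism for every $\ell\neq p$. Its cokernel is the torsion-free group $T_\ell(\Br(S))$, so it suffices to know $\rho(S)=b_2(S)$. This is exactly the step the paper makes explicit, and you never establish it. Without it you cannot split off $\mbf1(-1)^{\rho(S)}$ over $\Z[1/p]$. Your own decomposition does supply it: on $H^2$ the point terms act by zero, so $N\cdot H^2(S,\Z_\ell(1))$ lies in the span of classes of curves.

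Your torsion step is also not right as described. Composing the left and right decompositions, $N^2\Delta_S$ equals a combination of $x\times S$ and $S\times x$ (the cross terms reduce to these via $\CH_0(S)=\Z$) plus $Z'\circ Z$. The latter is a $2$-cycle supported on the surface $D\times D'$, hence a combination of products of curves $D_j\times D'_k$. Conjugating by the projector of $\ft(S)$ kills the point terms. It does not kill the $D_j\times D'_k$ terms; it sends them to products of numerically trivial, hence torsion, divisor classes. Such products need not vanish: for an Enriques surface they give the nonzero element of $\Ker(\tau)$ in \eqref{thing:hom}. So you only get that $N^2\cdot\id_{\ft(S)}$ lies in the image of $\CH^1(S)_\tors\otimes\CH^1(S)_\tors$, not that it is zero. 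That still shows $\ft(S)$ is torsion, which is all (ii) requires.
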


\begin{proof}
Assume \ref{item:dDECOMP}.
Because the decomposition is stable under extension of the base field, for any algebraically closed field $k'/k$, we have $\CH_0(S_{k'})_{\Z[1/p]}\cong\Z[1/p]\oplus A_{k'}$ for some group $A_{k'}$ of finite torsion order.
This implies \ref{item:dDOD} by Roitman's theorem (\cite{roitman}) and its strengthening in positive characteristic due to Milne (\cite[Theorem 0.1]{milne-roitman}).

Now assume \ref{item:dDOD}.
Since the Albanese variety of $S$ is trivial, the Picard scheme of $S$ is zero-dimensional.
In particular, the group of numerically trivial cycles in $\CH^1(S)$ is precisely the torsion subgroup.

Pick $x\in S(k)$ and divisors $C_1,D_1,\dots,C_{\rho(S)},D_{\rho(S)}$ on $S$ for which $([C_1],\dots,[C_{\rho(S)}])$ and $([D_1],\dots,[D_{\rho(S)}])$ are dual bases of $\CH^1(S)_\Q/{\sim_\mr{num}}$ with respect to the intersection pairing.
We get a decomposition $\fh(S)_\Q=\mbf1\oplus\mbf1(-1)^{\rho(S)}\oplus\mbf1(-2)\oplus M$ in which the summands $\mbf1$, $\mbf1(-1)^{\rho(S)}$, and $\mbf1(-2)$ are cut out by the projectors
\[
\epsilon^0\ceq\{x\}\times S,
\quad
\epsilon^2\ceq\sum_{i=1}^{\rho(S)}[C_i\times D_i],
\quad\text{and}\quad
\epsilon^4\ceq S\times\{x\},
\]
respectively, and $M$ is cut out by the ``complementary'' projector $[\Delta_S]-\epsilon^0-\epsilon^2-\epsilon^4$.
By construction, for any algebraically closed field $k'/k$, $\CH^*(M_{k'})$ is the subgroup of numerically trivial cycles on $S_{k'}$.
Thus, by \ref{item:dDOD} and the previous paragraph, $\CH^*(M_{k'})=0$, so $M=0$ by \cite[Lemma 2.1]{gg}.
We deduce that $\rho(S)=b_2(S)$, and therefore that the map $\mr{NS}(S)\otimes\Z_\ell\to H^2(S,\Z_\ell(1))$ is an isomorphism for all primes $\ell\neq p$.
(The cokernel is, by the Kummer sequence, the Tate module $T_\ell(\Br(S))$, which is torsion-free.)
By Poincar\'e duality for $\ell$-adic cohomology, the intersection pairing on $\CH^1(S)_{\Z[1/p]}/{\sim_\mr{num}}$ is perfect.
Therefore, we may assume that the $C_i$ and $D_i$ above form dual bases of $\CH^1(S)_{\Z[1/p]}/{\sim_{\mr{num}}}$.
Then we obtain the decomposition described in \ref{item:dDECOMP}.
Statement \ref{item:dINV} is clear.
Statement \ref{item:dWDS} follows from Remark \ref{rmk:aocIIISC} (but can also be proven directly at the level of cycles).
\end{proof}

\begin{thing}
If $\charac(k)=0$ and $S$ is a surface over $k$ satisfying $b_1(S)=0$ and $b_2(S)=\rho(S)$, then we expect that $\CH_0(S)\cong\Z$ according to a classical conjecture of Bloch (\cite{bloch}).
In a recent preprint (\cite[Theorems A and B]{guletskii}), Guletski\u\i\ announced a proof of this conjecture, together with the following positive-characteristic version:
If $\charac(k)>0$, $S$ has a model over $\ol{\F_p}$, $b_1(S)=0$, and $b_2(S)=\rho(S)$, then $\CH_0(S)\cong\Z$.
Previously, Bloch's conjecture was proven for all surfaces not of general type (\cite{bkl}) and several classes of surfaces of general type (due to many authors, starting with \cite{im}; see e.g.\ the references in \cite{pw} and \cite{guletskii}).
\end{thing}

\begin{rmk}
In \cite[Proposition 3.6(1)]{sy} (see also \cite[Proposition 2.14(2)]{sy}), it is incorrectly stated that the map $\CH^2(M^\mr{t}\otimes M')\to\bigoplus_{i=2,3}\Hom(H^i(M),H^i(M'))$ is injective whenever $S$ and $S'$ are as in Proposition \ref{prop:dod}.
The issue is that the first isomorphism written on the top of page 14 of \tit{loc.\ cit.\ }is false in general, because some of the torsion in the cohomology of a product is in the image of the K\"unneth map.
In particular, the answer to \cite[Problem 3.8]{sy} is ``no''.
This does not effect the other results of \tit{loc.\ cit.\ }(but ``eff'' must be replaced by ``nor'' in the exact sequence \cite[equation (7.4)]{sy}).
\end{rmk}

\section{The motive of an Enriques surface}
\label{sec:enriques-motives}

\noindent
Continue to assume that $k$ is algebraically closed, and let $p$ denote the exponential characteristic of $k$, but now additionally assume that $p\neq2$.

\begin{thing}
\label{thing:hom}
Let $S$ be an Enriques surface over $k$.
Recall that the $2$-adic cohomology groups of $S$ are $(\Z_2,0,\Z_2^{10}\oplus\Z/2,\Z/2,\Z_2)$, and the $\ell$-adic cohomology of $S$ is torsion-free for any prime $\ell\nmid2p$ (\cite[\S1.4]{enriques-I}).
By \cite{bkl}, the equivalent conditions of Proposition \ref{prop:dod} hold for $S$; in particular, the torsion summand $\ft(S)$ of $\fh(S)_{\Z[1/p]}$ exists, and it is $2$-torsion by Lemma \ref{lem:action-on-coh}\ref{item:aocTO}.
Now let $S'$ be another Enriques surface over $k$.
Then \eqref{thing:tis-hom} yields a split exact sequence
\[
0
\to
\Z/2
\to
\Hom(\ft(S),\ft(S'))
\xrightarrow{\tau}
\bigoplus_{i=2}^3
\Hom(H^i(S,\Z_2)_\tors,H^i(S',\Z_2)_\tors),
\]
and the last term can be identified with $\Z/2\oplus\Z/2$.
Let $I(S,S')$ denote the image of $\tau$, viewed as a subgroup $\Z/2\oplus\Z/2$.
Notice that $\Coker(\tau)=H^4(S\times S',\Z_2)_\mr{tors}/(\text{algebraic classes})$.
\end{thing}

\begin{lem}
\label{lem:c}
Let $S$ and $S'$ be Enriques surfaces over $k$.
In the notation of \tn(\ref{thing:hom}\tn), we have that
\begin{enumerate}
\item
\label{item:cISO}
$\ft(S)\cong\ft(S')$ if and only if $(1,1)\in I(S,S')$.
\item
\label{item:cINDEC}
$\ft(S)$ is indecomposable if and only if
$I(S,S)=\{(0,0),(1,1)\}$.
\item
\label{item:cDEC}
$\ft(S)$ is decomposable if and only if $I(S,S)=\Z/2\oplus\Z/2$.
If this is the case, $\ft(S)=\ft^2(S)\oplus\ft^3(S)$ for nontrivial motives $\ft^2(S)$ and $\ft^3(S)$ which are unique up to isomorphism.
They satisfy $H^*(\ft^i(S),\Z_2)=H^i(S,\Z_2)_\tors$, $
\CH^*(\ft^2(S))
=
\CH^1(S)_\tors$, $
\CH^*(\ft^3(S))
=
0$, and $\ft^2(S)^{\mr t}\cong\ft^3(S)$.
\end{enumerate}
\end{lem}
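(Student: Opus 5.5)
The plan is to read everything off the map $\tau$ of \eqref{thing:hom}, combined with Lemma \ref{lem:action-on-coh}.

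\textbf{Basic facts about $I(S,S')$.} First, $I(S,S')$ is a subgroup of $\Z/2\oplus\Z/2$, because $\tau$ is additive. Second, composition of morphisms induces composition on cohomology. The torsion groups $H^i(-,\Z_2)_\tors$ ($i=2,3$) are each $\Z/2$, so $\Hom$ between them is $\Z/2$, and composition corresponds to componentwise multiplication. Hence $I(S,S)$ is a subring of $\Z/2\times\Z/2$ containing $(1,1)=\tau(\id)$.

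\textbf{Part (a).} If $\ft(S)\cong\ft(S')$, the isomorphism induces an isomorphism on $H^*(-,\Z_2)$, so $(1,1)\in I(S,S')$. Conversely, suppose $\phi\colon\ft(S)\to\ft(S')$ has $\tau(\phi)=(1,1)$. Then $\phi$ induces an isomorphism on $H^*(-,\Z_2)$, which is all the torsion cohomology since $\ft$ is $2$-torsion. By Proposition \ref{prop:dod}\ref{item:dWDS} we have $\ft(S)^{\mr t}\cong\ft(S)$ and likewise for $S'$. Lemma \ref{lem:action-on-coh}\ref{item:aocIIIS} then shows that $\phi$ is an isomorphism.

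\textbf{Parts (b) and (c): the dichotomy.} The subrings of $\Z/2\times\Z/2$ containing $(1,1)$ are exactly $\{(0,0),(1,1)\}$ and the whole ring. So it suffices to show two things.
\begin{itemize}
\item $\ft(S)$ is decomposable iff $I(S,S)$ contains $(1,0)$ (equivalently $(0,1)$).
\item The stated properties of the summands hold.
\end{itemize}

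\textbf{Decomposable implies $(1,0)\in I(S,S)$.} Let $\ft(S)=A\oplus B$ with $A,B$ nonzero, and let $e_A,e_B$ be the projectors. Their images $\tau(e_A),\tau(e_B)$ are orthogonal idempotents summing to $(1,1)$. If one of them were $(0,0)$, say $\tau(e_A)=0$, then $H^*(A,\Z_2)=0$. By Lemma \ref{lem:action-on-coh}\ref{item:aocTO} the torsion order of $A$ is then $1$, so $A=0$, a contradiction. Hence $\{\tau(e_A),\tau(e_B)\}=\{(1,0),(0,1)\}$.

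\textbf{$(1,0)\in I(S,S)$ implies decomposable.} Pick $\epsilon$ with $\tau(\epsilon)=(1,0)$; then $\epsilon$ and $\id-\epsilon$ induce orthogonal idempotents on cohomology. By Lemma \ref{lem:action-on-coh}\ref{item:aocOI}, lift them to orthogonal idempotents $\epsilon_2',\epsilon_3'$ in $\End(\ft(S))$. Their sum $e$ is an idempotent inducing the identity on cohomology, and $\id-e$ is an idempotent acting as zero on cohomology. Its image therefore has zero cohomology and, as above, vanishes, so $e=\id$. Set $\ft^i(S)\ceq\Img(\epsilon_i')$. Then $\ft(S)=\ft^2(S)\oplus\ft^3(S)$ with $H^*(\ft^i(S),\Z_2)=H^i(S,\Z_2)_\tors$, both nonzero.

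\textbf{Uniqueness.} Any decomposition into two nonzero summands has, by the argument above, one summand with cohomology exactly $H^2_\tors$ and the other with $H^3_\tors$. Summands of the same $\ft(S)$ with equal cohomology are isomorphic by Remark \ref{rmk:aocIIISC}, applied to summands of $\fh(S)_{\Z[1/p]}$.

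\textbf{Chow groups.} By \eqref{thing:torsion-in-surface}, $\CH^*(N)=\CH^1(N)\cong H^2(N,\Z_2)$ for such a summand $N$. This gives $\CH^*(\ft^3(S))=0$ and $\CH^*(\ft^2(S))=\CH^1(S)_\tors$, using Proposition \ref{prop:dod}\ref{item:dINV}.

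\textbf{Transpose.} Transposing the decomposition gives $\ft(S)^{\mr t}=\ft^2(S)^{\mr t}\oplus\ft^3(S)^{\mr t}$. Poincar\'e duality from \eqref{thing:torsion-in-surface} gives $H^3(\ft^2(S)^{\mr t})\cong H^2(\ft^2(S))^\vee\neq0$ and $H^2(\ft^2(S)^{\mr t})=0$. Under $\ft(S)^{\mr t}\cong\ft(S)$, the uniqueness statement then yields $\ft^2(S)^{\mr t}\cong\ft^3(S)$.

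\textbf{Main obstacle.} The delicate step is the lifting in the converse direction: producing genuine orthogonal idempotents and showing that they sum to the identity. This is exactly where the nilpotence of the kernel $J$, via Lemma \ref{lem:action-on-coh}\ref{item:aocOI}, and the vanishing of motives with zero cohomology, via part \ref{item:aocTO}, are needed.
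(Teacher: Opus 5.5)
Your proposal is correct and takes essentially the same route as the paper: part (a) comes from Lemma \ref{lem:action-on-coh}\ref{item:aocIIIS} together with $\ft(S)^{\mr t}\cong\ft(S)$, parts (b) and (c) come from lifting idempotents via Lemma \ref{lem:action-on-coh}\ref{item:aocOI}, and uniqueness and $\ft^2(S)^{\mr t}\cong\ft^3(S)$ come from Remark \ref{rmk:aocIIISC}. You also fill in details the paper leaves implicit, namely that a summand with zero cohomology vanishes and that the lifted idempotents sum to the identity.
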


\begin{proof}
Statement \ref{item:cISO} follows from Lemma \ref{lem:action-on-coh}\ref{item:aocIIIS}.
Noting that $(1,1)=\tau(\id_{\ft(S)})\in I(S,S)$, statement \ref{item:cINDEC} and the first sentence of \ref{item:cDEC} follow from Lemma \ref{lem:action-on-coh}\ref{item:aocOI}.
The computation of $H^*(\ft^i(S),\Z_2)$ and $\CH^*(\ft^i(S))$ is easy, and the former implies the uniqueness of $\ft^2(S)$ and $\ft^3(S)$ as well as the isomorphism $\ft^2(S)^{\mr t}\cong\ft^3(S)$ by Remark \ref{rmk:aocIIISC}.
\end{proof}

\begin{rmk}
\label{rmk:sycomp}
Since $\CH^2(S_{k(S')})$ is the direct limit of $\CH^2(S\times U)$, where $U$ ranges over the nonempty open subvarieties of $S'$, we have $I(S,S')\cong\CH_0(S_{k(S')})$ by the localization sequence for Chow groups.
In particular, if $\ft(S)$ is decomposable, then $\CH_0(S_{k(S)})=\Z/2\oplus\Z/2$.
In turn, $H^3_\mr{ur}(S\times S,\Q_\ell/\Z_\ell(2))=0$ for any prime $\ell\neq p$ by \cite[Corollary 6.4(a)]{kahn}.
\end{rmk}

\noindent
We will show in Examples \ref{eg:c-possibilities} and \ref{eg:c-possibilities-2} that, when $\ft(S)\ncong\ft(S')$, all three possibilities for $I(S,S')$ do occur.
For now, we turn toward the proofs of Theorems \ref{thm:enriques-thm-1} and \ref{thm:enriques-thm-2}.
The former will be a consequence of the following more general fact.

\begin{prop}
\label{prop:enriques-rat-map}
Let $f\cln S\dashrightarrow S'$ be a rational map between Enriques surfaces over $k$.
\begin{enumerate}
\item\label{item:ermISO}
Assume $f$ is of odd degree.
Then $\ft(S)\cong\ft(S')$.
\item\label{item:ermDECOMP}
Assume $f$ is of even degree.
Then $(0,1)\in I(S,S')$.
In particular, if $S=S'$, then $\ft(S)$ is decomposable.
\end{enumerate}
\end{prop}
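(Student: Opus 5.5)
Take the closure $\Gamma\subseteq S\times S'$ of the graph of $f$; it is a $2$-cycle, i.e.\ a class in $\CH^2(S\times S')=\Hom(\fh(S),\fh(S'))$ in the contravariant normalization. Compose with the inclusion $\ft(S)\to\fh(S)_{\Z[1/p]}$ and the projection $\fh(S')_{\Z[1/p]}\to\ft(S')$ to get $\phi_f\in\Hom(\ft(S),\ft(S'))$. We compute $\tau(\phi_f)\in\Z/2\oplus\Z/2$, that is, the action of $[\Gamma]$ on $H^2(-,\Z_2)_\tors$ and $H^3(-,\Z_2)_\tors$, and conclude using Lemma \ref{lem:c}. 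The degree-$2$ and degree-$3$ components of $\Gamma$ behave differently: one acts as a pullback $f^*$, the other as a pushforward $f_*$, and $f_*\circ f^*=\deg(f)$.

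\textbf{Degree $2$.} By Proposition \ref{prop:dod}\ref{item:dINV}, $H^2(S,\Z_2)_\tors=\CH^1(S)_\tors=\Z/2$ is generated by the canonical class $K_S$. The map induced by $[\Gamma]$ on this group is the pullback of divisors $f^*\colon\Pic(S')\to\Pic(S)$. This is well defined because $f$ is defined away from codimension $2$ and $\Pic$ is insensitive to removing codimension-$2$ subsets. We must decide whether $f^*K_{S'}$ is $0$ or $K_S$. Resolve $f$ as $\tilde S\xrightarrow{\pi}S$ and $g\colon\tilde S\to S'$ with $\pi$ a composite of blowups. The non-algebraic torsion classes pull back, and the torsion line bundle $K_{S'}$ corresponds to the étale double cover (the K3 cover) $X'\to S'$. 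The pullback $g^*K_{S'}$ is nonzero exactly when the fiber product $X'\times_{S'}\tilde S$ is connected.

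\begin{itemize}
\item If $\deg f$ is odd, $f_*f^*=\deg f$ forces $f^*$ to be nonzero on torsion: $f_*f^*$ is multiplication by $\deg f$, which is the identity on a $2$-torsion group.
\item If $\deg f$ is even, the clean case is when $f$ factors through the pulled-back cover, so that $f^*K_{S'}=0$; this is what the paper's later constructions presumably arrange.
\end{itemize}

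\textbf{Degree $3$.} Here $[\Gamma]$ acts on $H^3$, which Poincaré duality identifies with the dual of $H^2$ with its roles transposed. By the duality statement in \eqref{thing:tis-hom}, the action on $H^3$ is dual to the action of the transpose $\Gamma^{\mr t}$ on $H^2$, and $\Gamma^{\mr t}$ acts on $H^2$ as the pushforward $f_*\colon\Pic(S)_\tors\to\Pic(S')_\tors$. Hence the $H^3$-component of $\tau(\phi_f)$ equals the $H^2$-component for $\Gamma^{\mr t}$, namely whether $f_*K_S=K_{S'}$.

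For this, work with the étale description: the class of $K_S$ in $H^1(S,\mu_2)$, compared with $H^1(\tilde S,\mu_2)$, pushes forward to $H^1(S',\mu_2)$ via $g_*$. Pulling back the cover $X'$ along $g$ and taking the norm gives $g_*g^*=\deg$, so $f_*f^*K_{S'}=\deg(f)\cdot K_{S'}$. Since $f^*K_{S'}$ is either $0$ or $K_S$, we get:

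\begin{itemize}
\item for odd degree, $f_*K_S=K_{S'}$, so the $H^3$-component is $1$;
\item for even degree, $f_*K_S$ would be $K_{S'}$ if $f^*K_{S'}=K_S$, which contradicts $f_*f^*K_{S'}=\deg(f)K_{S'}=0$. Hence $f_*K_S=0$ or $f^*K_{S'}=0$, and at least one component vanishes.
\end{itemize}

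\textbf{Conclusion.} In the odd case both components equal $1$, so $\tau(\phi_f)=(1,1)$ and Lemma \ref{lem:c}\ref{item:cISO} gives $\ft(S)\cong\ft(S')$. In the even case, we still need the $H^3$-component (pushforward) to be $1$ and the $H^2$-component to be $0$. This identification is the main obstacle: showing $f_*K_S=K_{S'}$ even when the degree is even. I would attack it by showing that $f_*$ is surjective on $\Pic_\tors$, arguing via the Brauer/$H^3$ side. Alternatively, I would use that $f^*$ kills $K_{S'}$ while $f_*$ is dual to pullback on $H^3=\Z/2$. The pullback $f^*\colon H^3(S',\Z_2)\to H^3(S,\Z_2)$ is nonzero because its Poincaré dual on $H^1(-,\mu_2)$ pushforward composes with the pullback to give the trace pairing. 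I would then recheck which components correspond to $f^*$ versus $f_*$ under the paper's contravariant conventions, so that the final answer is $(0,1)$. For $S=S'$, Lemma \ref{lem:c}\ref{item:cDEC} then applies, since $I(S,S)$ contains $(1,1)$ and $(0,1)$ and is therefore all of $\Z/2\oplus\Z/2$.
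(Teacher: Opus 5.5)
Your odd-degree argument is fine and matches the paper. There $f_*f^*=\deg f$ is the identity on the $2$-torsion, so $f^*$ and $f_*$ are both isomorphisms on $\Pic(-)_\tors\cong H^2(-,\Z_2)_\tors$, and $\tau(\phi_f)=(1,1)$ however the two components are labelled. The even-degree part has a genuine gap, for two reasons.

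First, you have the two components swapped. The image of $\tilde S$ in $S\times S'$ acts by $\Gamma_*=\tilde f_*\circ\pi^*\colon H^i(S,\Z_2)\to H^i(S',\Z_2)$ in \emph{both} degrees, i.e.\ as a pushforward. A pullback $\Pic(S')\to\Pic(S)$ runs in the wrong direction to be a component of $\tau$. So the $H^2$-component of $\tau(\phi_f)$ is the norm $f_*\colon\Pic(S)_\tors\to\Pic(S')_\tors$. By the transpose statement in \eqref{thing:tis-hom}, the $H^3$-component is the Poincar\'e dual of $\Gamma^{\mr t}_*=f^*\colon H^2(S',\Z_2)_\tors\to H^2(S,\Z_2)_\tors$. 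Consequently the ``main obstacle'' you isolate, $f_*K_S=K_{S'}$ for even $\deg f$, is not merely hard but false. Your fallback inputs are false too: $f^*$ does not kill $K_{S'}$, and $f^*$ is not nonzero on $H^3(-,\Z_2)_\tors$, since it is dual to $f_*$ on $H^2_\tors$, which vanishes.

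Second, the key input is missing. The map $f^*$ on $H^1(-,\Z/2)\cong H^2(-,\Z_2)_\tors$ is injective for \emph{every} dominant rational map of Enriques surfaces, whatever its degree. The paper proves this via the K3 cover $X'\to S'$. If $\tilde S\times_{S'}X'$ were disconnected, $\tilde f$ would lift to a dominant morphism $\tilde S\to X'$. Then $\tilde S$ would inherit a nonzero $2$-form, contradicting $p_g(\tilde S)=0$. (The $2$-form argument needs this lift to be separable, which is automatic in characteristic $0$.) So the scenario you expect in the even case, $f$ factoring through the pulled-back cover, is exactly what cannot happen.

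Given this injectivity, the even case is immediate. Since $f^*$ is bijective on $H^2_\tors$, its dual $\tilde f_*$ is bijective on $H^3_\tors$, so the $H^3$-component is $1$. Since $f_*f^*=\deg f=0$ on $H^2(S',\Z_2)_\tors$ and $f^*$ is bijective, $f_*=0$ on $H^2_\tors$, so the $H^2$-component is $0$. This gives $\tau(\phi_f)=(0,1)$, after which your final step via Lemma \ref{lem:c} for $S=S'$ is fine.
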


\begin{proof}
Resolving $f$ gives a diagram \smash{$S\overset\pi\leftarrow\tilde S\overset{\tilde f}\to S'$} in which $\pi$ is a composition of blowups and $\tilde f$ is a generically finite morphism of odd degree.
By the blowup formula \eqref{thing:blowup}, the map $\pi^*\cln H^*(S,\Z_2)_\tors\to H^*(\tilde S,\Z_2)_\tors$ is an isomorphism, and by Poincar\'e duality, so is $\pi_*$.
In particular, $H^i(\tilde S,\Z_2)_\tors\cong\Z/2$ for $i\in\{2,3\}$.

\ref{item:ermISO}.
Since $\tilde f$ is generically finite of odd degree, the projection formula shows that $\tilde f_*\circ\tilde f^*$ acts on cohomology as multiplication by an odd number.
We deduce that $\tilde f_*\circ\pi^*$ induces an isomorphism $H^*(S,\Z_2)_\tors\iso H^*(S',\Z_2)_\tors$.
Therefore, the image of $\tilde S$ in $S\times S'$ defines an element of $\CH^2(S\times S')$ whose image in $\CH^2(\ft(S)^\vee\otimes\ft(S'))$ induces an isomorphism $H^*(\ft(S),\Z_2)\iso H^*(\ft(S'),\Z_2)$.
We conclude the proof by applying Lemma \ref{lem:action-on-coh}\ref{item:aocIIIS}.

\ref{item:ermDECOMP}.
Consider the fiber product
\[
\begin{tikzcd}
\tilde X\arrow[r]\arrow[d]&
X'\arrow[d]\\
\tilde S\arrow[r,"\tilde f"]&
S',
\end{tikzcd}
\]
where $X'\to S'$ is K3 cover.
If $\tilde X$ were disconnected, then $\tilde f$ would factor through a surjective morphism $\tilde S\to X'$; but this is impossible, because $\tilde S$ would inherit a nontrivial holomorphic $2$-form from $X$.
Therefore, $\tilde f^*\cln H^1(S',\Z/2)\to H^1(\tilde S,\Z/2)$ is injective.
Since $H^1(S',\Z_2)\cong H^1(\tilde S,\Z_2)\cong0$ by the blowup formula, the boundary morphisms $H^1(S',\Z/2)\to H^2(S',\Z_2)_\tors$ and $H^1(\tilde S,\Z/2)\to H^2(\tilde S,\Z_2)_\tors$ are isomorphisms.
Thus $\tilde f^*\cln H^2(S',\Z_2)_\tors\to H^2(\tilde S,\Z_2)_\tors$ is also an isomorphism.
By Poincar\'e duality, so is $\tilde f_*\cln H^3(\tilde S,\Z_2)_\tors\to H^3(S',\Z_2)_\tors$.
Moreover, $\tilde f_*\circ\tilde f^*$ induces multiplication by an even number (hence zero) on $H^2(S',\Z_2)_\tors$, so the map $\tilde f_*\cln H^2(\tilde S,\Z_2)_\tors\to H^2(S',\Z_2)_\tors$ must vanish.
All in all, $\tilde f_*\circ\pi^*$ kills $H^2(S,\Z_2)_\tors$ but acts as an isomorphism on $H^3(S',\Z_2)_\tors$.
We conclude, as in the proof of \ref{item:ermISO}, by considering the image of $\tilde S$ in $S\times S'$; the statement for $S=S'$ follows from Lemma \ref{lem:action-on-coh}\ref{item:aocOI}.
\end{proof}

\begin{proof}[Proof of Theorem \ref{thm:enriques-thm-1}]
Let $\circ,\bullet\in\mf L$.
If $\phi\cln E_\circ\to E_\bullet$ and $\psi\cln F_\circ\to F_\bullet$ are isogenies satisfying $\phi(P_\circ)=P_\bullet$ and $\psi(Q_\circ)=Q_\bullet$, then $\phi$ and $\psi$ induce a rational map $S_\circ\dashrightarrow S_\bullet$ of degree $\deg(\phi)\cdot\deg(\psi)$.
Therefore, statement \ref{item:et1ISO} of the present theorem follows immediately from Proposition \ref{prop:enriques-rat-map}\ref{item:ermISO}.

\ref{item:et1DECOMP}.
Recall that $E_\circ$ is an elliptic curve over $k$ such that $\End(E_\circ)\cong\cO_L$, where $L$ is a quadratic imaginary number field in which $2$ splits.
By the previous paragraph and Proposition \ref{prop:enriques-rat-map}\ref{item:ermDECOMP}, it suffices to find $P_\circ\in E_\circ[2]\setminus\{O\}$ and an endomorphism $\phi\in\End(E_\circ)$ of even degree satisfying $\phi(P_\circ)=P_\circ$.
By hypothesis, $E_\circ[2]$ is isomorphic, as an $\End(E_\circ)\cong\mc O_L$-module, to $\mc O_L/2\cong\mc O_L/\mf p\times\mc O_L/\ol{\mf p}\cong\F_2\times\F_2$, where $2\mc O_L=\mf p\cdot\ol{\mf p}$ is the factorization into prime ideals.
Thus, if we pick $P_\circ$ and $\phi$ whose images in $\F_2\times\F_2$ (under the maps
\[
E_\circ[2]\cong\cO_L/2\cong\F_2\times\F_2
\quad\text{and}\quad
\End(E_\circ)\cong\cO_L\onto\cO_L/2\cong\F_2\times\F_2,
\]
respectively) coincide and equal $(1,0)$ or $(0,1)$, then $\deg(\phi)$ is even and $\phi(P_\circ)=P_\circ$.
\end{proof}


\begin{proof}[Proof of Theorem \ref{thm:enriques-thm-2}]
Let's assume that $(\lambda,\mu,\lambda'^*)$ is suitable; the other cases are similar.
Let $\alpha$ and $\alpha'$ denote the unique nonzero elements in $H^1(S_{\lambda,\mu},\mu_2)$ and $H^1(S_{\lambda',\mu'},\mu_2)$, respectively, and let $\alpha_{P_{\lambda'^*}}\in H^1(E_{\lambda'^*},\mu_2)$ be as in \ref{thing:ec-coh}\ref{item:ec2}.
Note that $H^1(\ft(S_{\lambda,\mu}),\mu_2)=H^1(S_{\lambda,\mu},\mu_2)$, and similarly for $S_{\lambda',\mu'}$ (and also for $H^3$).

There exists a pullback diagram of the form
\[
\begin{tikzcd}
E_{\lambda'}
\arrow[r]
\arrow[d]
&
\Kum(E_{\lambda'}\times E_{\mu'})
\arrow[d]
\\
E_{\lambda'}/P_{\lambda'}
\arrow[r,"f"]
&
S_{\lambda',\mu'}.
\end{tikzcd}
\]
(This $f$ is the inclusion of a half-fiber of an elliptic fibration on $S_{\lambda',\mu'}$.)
As noted in \eqref{thing:star}, we have $E_{\lambda'}/P_{\lambda'}\cong E_{\lambda'^*}$ and, under this isomorphism, $f^*\alpha'=\alpha_{P_{\lambda'^*}}$.
In particular, $f$ induces a map $\ft(S_{\lambda',\mu'})\to\fh(E_{\lambda'^*})$ sending $\alpha'$ to $\alpha_{P_{\lambda'^*}}$.

Now let $\beta$ be any non-algebraic class in $H^2(S_{\lambda,\mu},\mu_2)$, and let $\gamma$ be the unique nonzero class in $H^3(S_{\lambda,\mu},\mu_2)$.
Then $\gamma=\ol\partial\beta$, where $\ol\partial$ is the boundary map $H^2(S_{\lambda,\mu},\mu_2)\to H^3(S_{\lambda,\mu},\mu_2)$, i.e.\ the composition of the boundary map $\partial\cln H^2(S_{\lambda,\mu},\mu_2)\to H^3(S_{\lambda,\mu},\Z_2(1))$ with reduction mod $2$.
Since $\partial\alpha_{P_{\lambda'^*}}=0$, the Leibniz rule gives $\ol\partial(\beta\otimes\alpha_{P_{\lambda'^*}})=\gamma\otimes\alpha_{P_{\lambda'^*}}$.
Thus the class $\gamma\otimes\alpha_{P_{\lambda'^*}}\in H^4(S_{\lambda,\mu}\times E_{\lambda'^*},\mu_2^{\otimes2})$ is not algebraic by Corollary \ref{cor:galois-nonzero-application}, Proposition \ref{prop:ur}, and the assumption that $(\lambda,\mu,\lambda')$ is suitable.
Equivalently, no correspondence $\Gamma\in\CH^2(S_{\lambda,\mu}\times E_{\lambda'^*})$ satisfies $\Gamma_*\alpha=\alpha_{P_{\lambda'^*}}$ (see \eqref{eq:kp0}).
Therefore, there is no map $\ft(S_{\lambda,\mu})\to\fh(E_{\lambda'^*})$ sending $\alpha$ to $\alpha_{P_{\lambda'^*}}$.
Comparing with the final sentence of the previous paragraph, we see that $\ft(S_{\lambda,\mu})\ncong\ft(S_{\lambda',\mu'})$, as desired.
\end{proof}

\begin{eg}
\label{eg:c-possibilities}
Let $\lambda,\mu,\lambda',\mu'\in k\setminus\{0,1\}$.
Using that the boundary map $H^1(\ft(S),\Z/2)\to H^2(\ft(S),\Z_2)$ is an isomorphism for any Enriques surface $S$ over $k$, it follows from the proof of Theorem \ref{thm:enriques-thm-1} that if either $(\lambda,\mu,\lambda'^*)$ or $(\lambda,\mu,\mu'^*)$ is suitable, then $I(S_{\lambda,\mu},S_{\lambda',\mu'})\subseteq0\oplus\Z/2$ and by Poincar\'e duality, $I(S_{\lambda',\mu'},S_{\lambda,\mu})\subseteq\Z/2\oplus0$.
Therefore, if additionally one of $(\lambda',\mu',\lambda^*)$ or $(\lambda',\mu',\mu^*)$ is suitable, then $I(S_{\lambda,\mu},S_{\lambda',\mu'})=0$.
For example, if $k=\ol\Q$, $\lambda=\mu=25$, and $\lambda'=\mu'=25^*=9/4$, then \ref{defn:suitable}\ref{item:gnaBB} holds for $(\lambda,\mu,\lambda'^*)$ with $k_0=\Q$ and $v$ the $5$-adic valuation and for $(\lambda',\mu',\lambda^*)$ with $k_0=\Q$ and $v$ the $3$-adic valuation.
\end{eg}

\begin{eg}
\label{eg:c-possibilities-2}
Let $\lambda,\mu\in k\setminus\{0,1\}$.
Under the obvious isomorphism $E_{1-\lambda}\iso E_\lambda$, the point $P_{1-\lambda}$ does not map to $P_\lambda$.
That way, by \eqref{thing:star}, there is a degree-$2$ isogeny $E_{1-\lambda}\to E_{\lambda^*}$ which sends $P_{1-\lambda}$ to $P_{\lambda^*}$.
As in the proof of Theorem \ref{thm:enriques-thm-1}, we obtain a degree-$2$ rational map $S_{1-\lambda,\mu}\dashrightarrow S_{\lambda^*,\mu}$.
If, in addition, $(1-\lambda,\mu,\lambda)$ is suitable, then $I(S_{1-\lambda,\mu},S_{\lambda^*,\mu})=0\oplus\Z/2$ and $I(S_{\lambda^*,\mu},S_{1-\lambda,\mu})=\Z/2\oplus0$ by the previous paragraph and part \ref{item:ermDECOMP} of Proposition \ref{prop:enriques-rat-map}.
For example, if $k=\ol\Q$, $\lambda=4/9$ (so $\lambda^*=1/25$), and $\mu=-1$, then \ref{defn:suitable}\ref{item:gnaBB} holds for $(1-\lambda,\mu,\lambda)$ with $k_0=\Q$ and $v$ the $3$-adic valuation.
\end{eg}

\begin{eg}
\label{eg:nsni}
Assume that $S$ and $S'$ are Enriques surfaces over $k$ such that $I(S,S')=0\oplus\Z/2$ and $\ft(S)$ and $\ft(S')$ are decomposable.
Then the nonzero map $\ft(S)\to\ft(S')$ induces a map $\ft^3(S)\to\ft^3(S')$ which is an isomorphism on cohomology.
However, it is not possible that $\ft^3(S)\cong\ft^3(S')$, for this would imply that $\ft^2(S)\cong\ft^2(S')$ (since $\ft^2$ is dual to $\ft^3$), hence in turn that $\ft(S)\cong\ft(S')$ (since $\ft=\ft^2\oplus\ft^3$).

Explicitly, consider the curve $E$ over $\ol\Q$ given by $y^2=(x+7)(x^2-7x+14)$, which satisfies $\End(E)\cong\mc O_{\Q(\sqrt{-7})}$ (\cite[elliptic curve \href{https://www.lmfdb.org/EllipticCurve/Q/784/f/4}{\texttt{784.f4}}]{lmfdb}).
Let $\mu\ceq(e_2+7)/(e_3+7)$, where $e_2$ and $e_3$ are the roots of $x^2-7x+14$;
then $E\cong E_\mu$.
Since $e_2$ and $e_3$ are conjugate over $\Q$ and $3$ is inert in $\Q(\sqrt{-7})$, we have $v(\mu)=0$ for the $3$-adic valuation $v$ on $\Q(\sqrt{-7})$.
Therefore $I(S_{-5/9,\mu},S_{1/25,\mu})=0\oplus\Z/2$ as in Example \ref{eg:c-possibilities-2}, while $\ft(S_{-5/9,\mu})$ and $\ft(S_{1/25,\mu})$ are decomposable by part \ref{item:et1DECOMP} of Theorem \ref{thm:enriques-thm-1}.
\end{eg}

\begin{eg}
\label{eg:same-cover}
We can also construct examples where the Enriques surfaces share the same K3 cover.
For example, if $\lambda,\mu\in k\setminus\{0,1\}$ and $(\lambda,1-\mu,\mu^*)$ and $(1-\lambda,\mu,\lambda^*)$ are both suitable, then $I(S_{\lambda,1-\mu},S_{1-\lambda,\mu})=0$, while $S_{\lambda,1-\mu}$ and $S_{1-\lambda,\mu}$ have isomorphic K3 covers (since $E_\lambda\cong E_{1-\lambda}$ and $E_\mu\cong E_{1-\mu}$).
Explicitly, if we take $k=\ol\Q$, $\lambda=25$ (so $\lambda^*=9/4$), and $\mu=36$ (so $\mu^*=49/25$), then $(\lambda,1-\mu,\mu^*)$ satisfies \ref{defn:suitable}\ref{item:gnaBB} with $v$ the $5$-adic valuation on $\Q$, and $(1-\lambda,\mu,\lambda^*)$ satisfies \ref{defn:suitable}\ref{item:gnaBB} with $v$ the $3$-adic valuation on $\Q$.
\end{eg}

\appendix

\section{On \texorpdfstring{\cite[Proposition 4.21]{vishik}}{[Vis23, Proposition 4.21]}}
\label{appx:vishik}

\noindent
The error in the proof of \cite[Proposition 4.21]{vishik} arises in the phrase ``this pair of Rost submodules satisfies all the conditions of Theorem 4.20(2)''.
In fact, condition (a) of \cite[Theorem 4.20(2)]{vishik} fails, as we will explain in \eqref{thing:mistake} below.

Let $k$ be an algebraically closed field of characteristic $0$.

\begin{thing}
For a ring $R$, let $\DM(k,R)$ denote Voevodsky's triangulated category of motives over $k$ with coefficients in $R$, and let $\DMgm(k,R)$ be the subcategory of geometric motives.
For $M\in\DM(k,R)$, its motivic cohomology and homology are given by $H_\mot^{i,j}(M,R)\ceq\Hom_{\DM(k,R)}(M,R(j)[i])$ and $H^\mot_{i,j}(M,R)\ceq\Hom_{\DM(k,R)}(R(j)[i],M)$, respectively (where, importantly, we take morphisms in the category with $R$-coefficients).
\end{thing}

\begin{thing}
\label{thing:dmcoc}
For any integer $n\geq0$, the functor from the category of smooth projective $k$-varieties to $\DM(k,\Z/n)$ factors through a fully faithful functor $\CHM(k,\Z/n)^\mr{op}\to\DMgm(k,\Z/n)$.
This is well-known when $n=0$ (e.g.\ \cite[Proposition 20.1]{mvw});
for $n\geq1$, the same proof works once one checks that $H_\mot^{2i,i}(M(X),\Z/n)\cong\CH^i(X)\otimes\Z/n$, which can be done using the following adjoint triple of functors $\nu^*\dashv\nu_*\dashv\nu^*[-1]$:
\[
\DM(k,\Z)
\xrightarrow{\nu^*}
\DM(k,\Z/n)
\xrightarrow{\nu_*}
\DM(k,\Z)
\xrightarrow{\nu^*[-1]}
\DM(k,\Z/n),
\]
where $\nu^*(M)\ceq\mr{Cone}(n\cln M\to M)$ and $\nu_*$ views a motive with $\Z/n$-coefficients as a motive with $\Z$-coefficients.
\end{thing}

\noindent
The following application of Bondarko's theory of weight structures (\cite{bondarko}) allows one to detect the Chow motives in $\mbf{DM}_\mr{gm}(k,\Z/n)$ using motivic cohomology and homology.

\begin{prop}
\label{prop:bondarko}
Fix $n\geq0$, and let $M\in\DM_\mr{gm}(k,\Z/n)$.
Then the fol{}lowing are equivalent:
\begin{enumerate}
\item
$M$ is a Chow motive with $\Z/n$-coefficients, i.e.\ lives in the essential image of the functor $\CHM(k,\Z/n)\to\DM_\mr{gm}(k,\Z/n)$.
\item
For any finitely generated field extension $k'/k$, we have
$H_\mot^{i,j}(M_{k'},\Z/n)=0$ for al{}l $i>2j$, and $H^\mot_{i,j}(M_{k'},\Z/n)=0$ for al{}l $i<2j$.
\end{enumerate}
\end{prop}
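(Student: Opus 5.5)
The plan is to run the standard weight-structure argument of Bondarko. First we equip $\DMgm(k,\Z/n)$ with its Chow weight structure $w$, whose heart is the idempotent completion of the image of $\CHM(k,\Z/n)$. This heart is already idempotent complete, by \eqref{thing:dmcoc} and the fact that Chow motives form a Karoubian category. We then use the characterization that $M$ lies in the heart if and only if $M\in\DMgm(k,\Z/n)_{w\le0}\cap\DMgm(k,\Z/n)_{w\ge0}$. By Bondarko's theory, membership in $w\le0$ (respectively $w\ge0$) is detected by orthogonality to the heart shifted appropriately. For the direction (a)$\Rightarrow$(b), we reduce to $M=\fh(X)_{\Z/n}(m)$ for a smooth projective $X$. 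By the identification $H_\mot^{i,j}(X_{k'},\Z/n)$ with higher Chow groups $\CH^j(X_{k'},2j-i;\Z/n)$, these groups vanish for $i>2j$, since higher Chow groups of negative index are zero. For motivic homology we use duality $M^\vee$ to convert it into motivic cohomology of $X_{k'}$ with twists, which vanishes in the same range. Base change to $k'$ preserves Chow motives, so the condition holds for every finitely generated $k'$.

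For (b)$\Rightarrow$(a), we take the weight decomposition and weight complex of $M$ and aim to show that $M$ is orthogonal to $\CHM[i]$ for the appropriate nonzero $i$. The key step is to reduce orthogonality against an arbitrary Chow motive $\fh(Y)$ to a vanishing of motivic (co)homology of $M$ over function fields. Concretely, we filter $Y$ by a Gersten/coniveau argument: $\Hom(M,\fh(Y)(j)[i])$ is built from $H_\mot^{*,*}(M_{k(y)},\Z/n)$ with degrees shifted by the codimension of the points $y\in Y$. The mismatch in weights is exactly compensated by the shift $(c)[2c]$, which keeps us in the range $i>2j$. Because $k(y)$ is finitely generated over $k$, hypothesis (b) applies at every point. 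A Gersten-type spectral sequence for the motive $M$ (Bondarko's ``Gersten weight structure'' on the category of comotives) then gives the vanishing of the negative-weight part. Dually, the motivic homology hypothesis, together with the perfect duality on $\DMgm$, kills the positive-weight part. This puts $M$ in the heart, and hence in the image of $\CHM(k,\Z/n)$.

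The main obstacle will be this last step. The Gersten/coniveau argument must be valid with $\Z/n$ coefficients for an arbitrary geometric motive rather than the motive of a variety. For that I would first try to invoke Bondarko's result that the Gersten weight structure on comotives is compatible with the Chow weight structure; this would reduce the claim to the statement that weights are detected on pro-spectra of function fields. I would also check that his arguments, written for $\Z$ or $\Z[1/p]$, carry over to $\Z/n$ via the adjunctions $\nu^*\dashv\nu_*\dashv\nu^*[-1]$ of \eqref{thing:dmcoc}.
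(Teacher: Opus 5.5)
Your proposal is correct and follows essentially the paper's route. The paper just cites Vishik's Proposition 5.1 for $n=0$ and notes that the same argument runs with Bondarko's Chow weight structure on $\DMgm(k,\Z/n)$; that is the framework you set up (heart equal to the Karoubian image of $\CHM(k,\Z/n)$, weights detected by orthogonality to shifted Chow motives, duality exchanging the two halves), so you are reconstructing the $n=0$ argument rather than citing it. The one simplification worth noting is that Bondarko's Gersten weight structure on comotives is not needed for your ``main obstacle'': rewrite $\Hom(M,M(Y)(j)[i])\cong H^{i+2d,\,j+d}_\mot(M\otimes M(Y),\Z/n)$ by duality, with $d=\dim Y$; the coniveau spectral sequence on $Y$ (which only needs Gysin triangles and continuity in $\DM(k,\Z/n)$, tensored with $M$) then has $E_1$-terms $H^{i+2e,\,j+e}_\mot(M_{k(y)},\Z/n)$ with $e=\dim\overline{\{y\}}$, and these vanish by (b) whenever $i>2j$.
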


\begin{proof}
When $n=0$, this is \cite[Proposition 5.1]{vishik}.
The same proof works for $n\geq1$, using Bondarko's Chow weight structure on $\DM_\mr{gm}(k,\Z/n)$ instead (\cite[Remark 6.6.1]{bondarko}).
\end{proof}

\begin{cor}
\label{cor:nncm}
If $n\geq1$, then no nontrivial Chow motive with $\Z$-coefficients is of the form $\nu_*(N)$ for some $N\in\DM_\mr{gm}(k,\Z/n)$.
\end{cor}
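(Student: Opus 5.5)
We argue by contradiction. Suppose $M=\nu_*(N)$ is a nontrivial Chow motive with $\Z$-coefficients, where $N\in\DMgm(k,\Z/n)$. The plan is to show that $n\cdot\id_M=0$ and then that $\End(M)$ is in fact trivial. The annihilation by $n$ is formal. Multiplication by $n$ on $\nu_*(N)$ is $\nu_*$ applied to multiplication by $n$ on $N$, and that map vanishes because $\DM(k,\Z/n)$ is $\Z/n$-linear. Hence $M$ is $n$-torsion.

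\textbf{Step 1: reduce to $\CH^*$.} Over any finitely generated extension $k'/k$, the Chow groups of $M$ are motivic cohomology:
\[
\CH^j(M_{k'})=H_\mot^{2j,j}(M_{k'},\Z)=\Hom_{\DM(k',\Z)}(\nu_*N_{k'},\Z(j)[2j]).
\]
By the adjunction $\nu_*\dashv\nu^*[-1]$ of \eqref{thing:dmcoc}, this equals
\[
\Hom_{\DM(k',\Z/n)}(N_{k'},\nu^*\Z(j)[2j-1]).
\]
Here $\nu^*\Z=\mathrm{Cone}(n\cln\Z\to\Z)\cong\Z/n$ in $\DM(k',\Z/n)$, so the group is $H^{2j-1,j}_\mot(N_{k'},\Z/n)$. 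The aim is to show that this group vanishes for all $j$ and all $k'$. Then \cite[Lemma 2.1]{gg} will not apply directly, since it needs $\Q$-coefficients. Instead I would pass to the $\Z$-linear analogue of Proposition \ref{prop:bondarko}, or argue as follows: the functor $M\mapsto\CH^*(M_{k(X)})$ on summands of $\fh(X)$ detects $\id_M$ through the generic point of the diagonal, so $M=0$.

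\textbf{Step 2: conditions on $N$.} Apply Proposition \ref{prop:bondarko} with $n=0$ to the Chow motive $M$. This gives $H_\mot^{i,j}(M_{k'},\Z)=0$ for $i>2j$ and $H^\mot_{i,j}(M_{k'},\Z)=0$ for $i<2j$. Translating through the adjunctions, we use $\nu^*\dashv\nu_*$ for homology:
\[
H^\mot_{i,j}(M,\Z)=\Hom(\Z(j)[i],\nu_*N)=\Hom(\nu^*\Z(j)[i],N)=H^\mot_{i,j}(N,\Z/n).
\]
For cohomology we use $\nu_*\dashv\nu^*[-1]$:
\[
H^{i,j}_\mot(M,\Z)=H^{i-1,j}_\mot(N,\Z/n).
\]
So $N$ satisfies $H^{i,j}_\mot(N_{k'},\Z/n)=0$ for $i\ge 2j$, which is slightly more than condition (b) requires, together with the homology vanishing for $i<2j$. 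By Proposition \ref{prop:bondarko} with coefficients $\Z/n$, $N$ is then a Chow motive with $\Z/n$-coefficients. Moreover $H^{2j,j}_\mot(N_{k'},\Z/n)=\CH^j(N_{k'})=0$ for every $j$ and every $k'$.

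\textbf{Step 3: conclude $N=0$.} For a Chow motive $N$ with $\Z/n$-coefficients cut out of $\fh(X)$, the endomorphism ring $\End(N)$ is a subquotient of $\CH^{\dim X}(N^\vee\otimes\fh(X)_{k})$. By the usual generic-point argument (Manin's identity principle restricted to $k'=k(X)$), $\id_N$ is detected in $\CH^*(N_{k(X)})$. Since that group vanishes, $\id_N$ restricts to zero over the generic point. One then needs a nilpotence argument to pass from "zero at the generic point" to "zero": the kernel of restriction to the generic point consists of correspondences supported on $D\times X$, and these are nilpotent by induction on dimension. This makes $\id_N$ nilpotent, so $N=0$, hence $M=\nu_*N=0$, contradicting nontriviality.

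I expect Step 2 to be the main obstacle. The adjunction shifts must be bookkept carefully, and one must verify that Bondarko's criterion applies in the $\Z/n$ setting with exactly these vanishing ranges. The shift by $-1$ is precisely what forces $\CH^*(N)=0$. The nilpotence step in Step 3 is standard (as in Rost nilpotence type arguments) but should be cited rather than reproved.
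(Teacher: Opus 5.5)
Your Steps 2 and 3 are the paper's proof: the adjunctions $\nu^*\dashv\nu_*\dashv\nu^*[-1]$ carry Bondarko's vanishing ranges from $M$ to $N$, so $N$ is a Chow motive with $\Z/n$-coefficients and $\CH^j(N_{k'})\cong H^{2j,j}_\mot(N_{k'},\Z/n)\cong H^{2j+1,j}_\mot(M_{k'},\Z)=0$ for all $k'$, hence $N=0$ and $M=0$; Step 1 and the $n$-torsion remark are unnecessary detours. One correction to how you justify the last step, which the paper simply asserts: correspondences supported on $D\times X$ are not nilpotent in general (the idempotent $[x\times X]$ cutting out $\mathbf{1}$ is one), so the induction on $\dim D$ has to use the vanishing of $\CH^*(N_{k(D)})$ at the generic points of $D$ at each stage, which you do have because you proved vanishing over every finitely generated $k'$.
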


\begin{proof}
Let $N$ be as above, and assume that its image $M\ceq\nu_*(N)$ in $\DM(k,\Z)$ is a Chow motive (with $\Z$-coefficients).
Using the adjoint triple of functors $\nu^*\dashv\nu_*\dashv\nu^*[-1]$ of \eqref{thing:dmcoc}, we find that $H^\mot_{i,j}(N,\Z/n)\cong H^\mot_{i,j}(M,\Z)$ and $H_\mot^{i,j}(N,\Z/n)\cong H_\mot^{i+1,j}(M,\Z)$ for all $i,j$.
Thus, since $M$ is a Chow motive, $N$ is also a Chow motive (with $\Z/n$-coefficients) by Proposition \ref{prop:bondarko}.
But also $\CH^i(N)\cong H_\mot^{2i,i}(N,\Z/n)\cong H_\mot^{2i+1,i}(M,\Z)=0$ for all $i$, and varying the base field $k$, the same argument gives that $\CH^i(N_{k'})=0$ for all $i$ and any field extension $k'/k$.
This implies that $N=0$, hence $M=0$.
\end{proof}

\begin{thing}
\label{thing:mistake}
Now we explain in detail what goes wrong in \cite[Proposition 4.21]{vishik}.
In the notation there, we claim that $N_\bullet(C)$ is not compact, i.e.\ not a geometric motive (\cite[Theorem 11.1.13]{cd}), contrary to condition (a) of \cite[Theorem 4.20(2)]{vishik}.
Indeed, if it were a geometric motive, the proof of \cite[Theorem 4.20]{vishik} would produce an exact triangle
\[
\mc T_C(-1)\to N\to N_\bullet(C)\xrightarrow{+1}
\]
in which $N\in\DMgm(k,\Z/n)$ (and this $N$ appears in the motive of a smooth projective curve).
Moreover, tracing through the proof shows that application of $\nu_*$ to this exact triangle yields the exact triangle
\[
\mc T_A(-1)\to M\to M_\bullet(A)\xrightarrow{+1}
\]
defining $M_\bullet(A)$.
(Note that $\nu_*(N_\bullet(C))=M_\bullet(A)$ and $\nu_*(\mc T_C)=\mc T_A$ by definition, and similarly for the motives $\mc K_C$ and $\mc K_A$ used in \cite{vishik}.)
We deduce from Corollary \ref{cor:nncm} that $M=0$, contrary to the hypothesis of \cite[Proposition 4.21]{vishik}.
\end{thing}

\section{Poincar\'e and K\"unneth in the presence of torsion}
\label{appx:poincare-kunneth}

\noindent
Throughout, $k$ is an algebraically closed field, $X$ and $Y$ are connected smooth projective varieties over $k$ of respective dimensions $d_X$ and $d_Y$, and $\ell$ is a fixed prime number not equal to $\charac(k)$.
For simplicity, we use a fixed compatible system of roots of unity to identify all Tate twists in \'etale cohomology.
However, everything can be adapted to work for general $k$, in which case all the maps (with the appropriate twists) become equivariant for the Galois action.
Also, everything below has analogues for Betti and crystalline cohomology, and in the latter case, everything respects the Frobenius actions.
The material in this section is well-known to experts, but we could not find some of it, especially Proposition \ref{prop:good-kunneth}, in the literature.

We begin with some basic applications of Poincar\'e duality in \'etale cohomology.

\begin{thing}
\label{thing:cohfin}
Fix a positive integer $n$.
There exists a ``trace'' isomorphism $\tr_X\cln H^{2d_X}(X,\Z/\ell^n)\iso\Z/\ell^n$ which makes the cup-product pairing
\[
\gen{-,-}_n
\cln
H^i(X,\Z/\ell^n)
\times
H^{2d_X-i}(X,\Z/\ell^n)
\to
H^{2d_X}(X,\Z/\ell^n)
\iso
\Z/\ell^n
\]
perfect for each $i$;
this is a version of Poincar\'e duality for $X$ in \'etale cohomology.
Therefore, for a morphism $f\cln X\to Y$, we have a ``pushforward'' map
$
f_*
\cln
H^i(X,\Z/\ell^n)
\to
H^{i+2(d_Y-d_X)}(Y,\Z/\ell^n)
$
which is defined to be equal, under Poincar\'e duality, to $(f^*)^\vee$, where $(-)^\vee$ denotes the functor $\Hom(-,\Z/\ell^n)$.
In other words, $f_*$ is defined by the identity $\gen{f_*\alpha,\beta}_n=\gen{\alpha,f^*\beta}_n$.
Then, for any $\Gamma\in H^{2d_X+e}(X\times Y,\Z/\ell^n)$, we define
$
\Gamma_*
\cln
H^i(X,\Z/\ell^n)
\to
H^{i+e}(Y,\Z/\ell^n)
$
by $\Gamma_*\alpha\ceq\mr{pr}_{2*}(\mr{pr}_1^*\alpha\cdot\Gamma)$, where $\mr{pr}_1$ and $\mr{pr}_2$ denote the projection maps $X\times Y\to X$ and $X\times Y\to Y$, respectively.

\end{thing}

\begin{thing}
Everything in the previous paragraph is compatible with the reduction maps $\Z/\ell^{n+1}\to\Z/\ell^n$, so we can define pushforward maps on $H^*(-,\Z_\ell)$ as well as maps of the form
\begin{equation}
\label{eq:zlpf}
\Gamma_*
\cln
H^i(X,\Z_\ell)
\to
H^{i+e}(Y,\Z_\ell)
\end{equation}
for any $\Gamma\in H^{2d_X+e}(X\times Y,\Z_\ell)$.
Also, we have $H^{2d_X}(X,\Z_\ell)\iso\Z_\ell$ and therefore a pairing
\[
\gen{-,-}
\cln
H^i(X,\Z_\ell)
\times
H^{2d_X-i}(X,\Z_\ell)
\to
H^{2d_X}(X,\Z_\ell)
\iso
\Z_\ell,
\]
which is perfect modulo torsion.
\end{thing}

\begin{thing}
\label{thing:torsion-pairing}
There is also a less well-known pairing, for any positive integer $n$, of the form
\begin{equation}
\label{eq:tp}
\ggen{-,-}_n
\cln
H^i(X,\Z_\ell)[\ell^n]
\times
H^{2d_X-i+1}(X,\Z_\ell)[\ell^n]
\to
\Z/\ell^n.
\end{equation}
It is given by $\ggen{\alpha,\beta}_n\ceq\gen{\ol\alpha,\tilde\beta}_n$, where $\ol\alpha$ is the mod-$\ell^n$ reduction of $\alpha$ and $\tilde\beta$ is any lift of $\beta$ along the boundary map $\partial\cln H^{2d_X-i}(X,\Z/\ell^n)\to H^{2d_X-i+1}(X,\Z_\ell)$.
This pairing is easily checked to be well-defined.
It is not in general perfect, but its left and right kernels are
\begin{align*}
&P^i_{\ell^n}(X)
&&\hspace{-6em}\ceq
H^i(X,\Z_\ell)[\ell^n]\cap \ell^nH^i(X,\Z_\ell)
\qquad\qquad\qquad\quad\text{and}
\\
&P^{2d_X-i+1}_{\ell^n}(X)
&&\hspace{-6em}\ceq
H^{2d_X-i+1}(X,\Z_\ell)[\ell^n]\cap \ell^nH^{2d_X-i+1}(X,\Z_\ell),
\end{align*}
respectively, which vanish for $n$ sufficiently large.
That is, $\ggen{-,-}_n$ descends to a perfect pairing $Q^i_{\ell^n}(X)\times Q^{2d_X-i+1}_{\ell^n}(X)\to\Z/\ell^n$, where $Q^i_{\ell^n}(X)\ceq H^i(X,\Z_\ell)[\ell^n]/P^i_{\ell^n}(X)$.
(In other words, $Q^i_{\ell^n}(X)$ is the image of $H^i(X,\Z_\ell)[\ell^n]$ in $H^i(X,\Z/\ell^n)$.)
One usually takes the union over all $n$ and considers the induced perfect pairing $H^i(X)_\mr{tors}\times H^{2d_X-i+1}(X)_\mr{tors}\to\Q/\Z$, but we use the above formulation so that the identity \eqref{eq:kp2} below makes sense.
\end{thing}

\noindent
Items \eqref{thing:cohfin}--\eqref{thing:torsion-pairing} have the following applications for Chow motives.

\begin{thing}
Fix a positive integer $a$ not divisible by $\ell$, and let $M\ceq(X,\epsilon,s)$ be a Chow motive with $\Z[1/a]$-coefficients; recall that this means $\epsilon\in\CH^{d_X}(X\times X)_{\Z[1/a]}$ is an idempotent correspondence and $s\in\Z$.
Let $\Lambda\in\{\Z/\ell,\Z/\ell^2,\dots,\Z_\ell\}$.
We define
\[
H^i(M,\Lambda)
\ceq
\mr{cl}_\Lambda(\epsilon)_*H^{i-2a}(X,\Lambda),
\]
where $\mr{cl}_\Lambda$ denotes the cycle-class map $\CH^i(-)_{\Z[1/a]}\to H^{2i}(-,\Lambda)$.
If $N\ceq(Y,\eta,t)$ is another Chow motive and $\phi\in\Hom(M,N)$, i.e.\ $\phi\in\eta\circ\CH^{2d_X+(t-s)}(X\times Y)_{\Z[1/a]}\circ\epsilon$, then $\cl_\Lambda(\phi)_*$ defines a map $H^i(M,\Lambda)\to H^i(N,\Lambda)$.
This defines a functor $H^i(-,\Lambda)$ on the category of Chow motives.
One checks that not only is $H^i(M,\Lambda)$ functorial in the ring $\Lambda$, but also that a short exact sequence $0\to\Lambda_1\to\Lambda_2\to\Lambda_3\to0$ induces a long exact sequence
\[
\cdots
\to
H^i(M,\Lambda_1)
\to
H^i(M,\Lambda_2)
\to
H^i(M,\Lambda_3)
\to
H^{i+1}(M,\Lambda_1)
\to
\cdots
\]
which is functorial in $M$ and the short exact sequence.
\end{thing}

\begin{thing}
\label{thing:motive-pairing}
Let $(-)^\mr{t}$ denote the map $H^*(X\times Y,\Z/\ell^n)\to H^*(Y\times X,\Z/\ell^n)$ which is pullback along the ``swap'' isomorphism $Y\times X\to X\times Y$ (equivalently, pushforward along its inverse).
Then
\[
\gen{\Gamma_*\alpha,\beta}_n
=
(-1)^{e(i+e)}\cdot\gen{\alpha,\Gamma^\mr{t}_*\beta}_n
\]
for each $\Gamma\in H^{2d_X+e}(X\times Y,\Z/\ell^n)$, $\alpha\in H^i(X,\Z/\ell^n)$, and $\beta\in H^{2d_Y-(i+e)}(Y,\Z/\ell^n)$.
One deduces that the same formulas hold for the pairings $\gen{-,-}$ and $\ggen{-,-}_n$.
In particular, if $M\ceq(X,\epsilon,0)$ is a Chow motive and $M^{\mr t}\ceq(X,\epsilon^\mr{t},0)$, then there are induced pairings
\[
\begin{aligned}
&H^i(M,\Z/\ell^n)
&\times\hspace{1em}
&H^{2d_X-i}(M^{\mr t},\Z/\ell^n)
&\to\hspace{1em}
&\Z/\ell^n,
\\
&H^i(M,\Z_\ell)
&\times\hspace{1em}
&H^{2d_X-i}(M^{\mr t},\Z_\ell)
&\to\hspace{1em}
&\Z_\ell,
\\
&H^i(M,\Z_\ell)[\ell^n]
&\times\hspace{1em}
&H^{2d_X-i+1}(M^{\mr t},\Z_\ell)[\ell^n]
&\to\hspace{1em}
&\Z/\ell^n;
\end{aligned}
\]
the first is perfect, the second is perfect modulo torsion, and the third has left- and right-kernels as described in \eqref{thing:torsion-pairing}.
\end{thing}

\noindent
Finally, we describe a version of the K\"unneth theorem for $\Z_\ell$-cohomology which gives an explicit description of the $\mr{Tor}$ terms.
First, we need the following property of the maps $\Gamma_*$.

\begin{thing}
The identity $\tr_{X\times Y}=\tr_X\cdot\tr_Y$ implies that
\begin{equation}
\label{eq:kp0}
(\alpha_0\boxtimes\beta_0)_*\alpha
=
\gen{\alpha_0,\alpha}_n\cdot\beta_0
\end{equation}
for any $\alpha_0\in H^i(X,\Z/\ell^n)$, $\beta_0\in H^{2d_X-i+e}(Y,\Z/\ell^n)$, and $\alpha\in H^j(X,\Z/\ell^n)$, where we put $\gen{\alpha_0,\alpha}_n\ceq0$ if $\alpha_0$ and $\alpha$ do not live in complementary dimensions.
By \eqref{eq:kp0}, we have
\begin{equation}
\label{eq:kp}
(\alpha_0\boxtimes\beta_0)_*\alpha
=
\gen{\alpha_0,\alpha}\cdot\beta_0
\end{equation}
for any $\alpha_0\in H^i(X,\Z_\ell)$, $\beta_0\in H^{2d_X-i}(Y,\Z_\ell)$, and $\alpha\in H^j(X,\Z_\ell)$.
There is also an analogue of the identity \eqref{eq:kp} for the pairings $\ggen{-,-}_n$ (which can be deduced from \eqref{eq:kp0}).
Let $\alpha_0\in H^i(X,\Z_\ell)[\ell^n]$ and $\beta_0\in H^{2d_X-i+1}(Y,\Z_\ell)[\ell^n]$, and again let $\tilde{\alpha_0}$ and $\tilde{\beta_0}$ be lifts along the appropriate boundary maps.
Then, for any $\alpha\in H^j(X,\Z_\ell)[\ell^n]$, we have
\begin{equation}
\label{eq:kp2}
[(\partial(\tilde{\alpha_0}\boxtimes\tilde{\beta_0}))_*\alpha]
=
\ggen{\alpha_0,\alpha}_n\cdot[\beta_0],
\end{equation}
where $\partial$ is the boundary map and the brackets denote the image in the group $Q_{\ell^n}^{j+e}(Y)$ defined in \eqref{thing:torsion-pairing}.
\end{thing}

\begin{prop}
\label{prop:good-kunneth}
Let $X$ and $Y$ be connected smooth projective varieties over $k$, and let $M$ and $N$ be summands of $\fh(X)_{\Z[1/a]}$ and $\fh(Y)_{\Z[1/a]}$, respectively.
Fix $e\in\Z$.
Then
\[
\begin{aligned}
0
\to
\bigoplus_{a+b=d_X+e}
H^a(M^{\mr t},\Z_\ell)
\otimes
H^b(N,\Z_\ell)
&\xrightarrow{\boxtimes}
H^{2d_X+e}(M^{\mr t}\times N,\Z_\ell)\\
&\hspace{1em}\xrightarrow{(-)_*}
\bigoplus_j
\Hom(
H^i(M,\Z_\ell)_\mr{tors},H^{i+e}(N,\Z_\ell)_\mr{tors}
)
\to
0
\end{aligned}
\]
is exact, where $(-)_*$ denotes the assignment $\Gamma\mapsto\Gamma_*$ of \tn(\ref{eq:zlpf}\tn).
\end{prop}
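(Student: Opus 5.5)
We reduce to the case $M=\fh(X)$, $N=\fh(Y)$ and then cut down by projectors. Every map in the sequence is functorial for correspondences acting on the two factors: the external product $\boxtimes$, and the assignment $\Gamma\mapsto\Gamma_*$. For the latter we use $(\eta\circ\Gamma\circ\epsilon)_*=\eta_*\circ\Gamma_*\circ\epsilon_*$, together with the compatibility of $\epsilon^{\mr t}$ acting on the first factor (by \eqref{thing:motive-pairing}). So applying the idempotent $\epsilon^{\mr t}\boxtimes\eta$ to an exact sequence for $X\times Y$ produces the sequence for $M^{\mr t}\times N$. Idempotents preserve exactness, since a retract of an exact sequence is exact. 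Twists only shift degrees. It therefore suffices to prove exactness for $X$ and $Y$ themselves, with $\Gamma\in H^{2d_X+e}(X\times Y,\Z_\ell)$.

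For $X\times Y$, the ordinary K\"unneth theorem for $\Z_\ell$-cohomology (perfect complexes $R\Gamma(X,\Z_\ell)\otimes^L R\Gamma(Y,\Z_\ell)\simeq R\Gamma(X\times Y,\Z_\ell)$) gives a split exact sequence
\[
0\to\bigoplus_{a+b=m}H^a(X)\otimes H^b(Y)\to H^m(X\times Y)\to\bigoplus_{a+b=m+1}\mr{Tor}(H^a(X),H^b(Y))\to0 .
\]
Write $m=2d_X+e$. We have
\[
\mr{Tor}(H^a(X)_\tors,H^b(Y)_\tors)\cong\Hom(H^a(X)_\tors^\vee,H^b(Y)_\tors),
\]
and by the torsion pairing \eqref{thing:torsion-pairing}, $H^a(X)_\tors^\vee\cong H^{2d_X-a+1}(X)_\tors$. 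Setting $i=2d_X-a+1$ gives $b=i+e$, so the Tor term is abstractly $\bigoplus_i\Hom(H^i(X)_\tors,H^{i+e}(Y)_\tors)$, as required. The tensor-product terms have the correct degree index.

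It remains to show that the concrete map $\Gamma\mapsto\Gamma_*$, restricted to torsion, realizes this identification. This splits into three steps.

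(1) \emph{Kernel contains the image.} For $\Gamma=\alpha_0\boxtimes\beta_0$, the identity \eqref{eq:kp} gives $\Gamma_*\alpha=\gen{\alpha_0,\alpha}\beta_0$. If $\alpha$ is torsion, then $\gen{\alpha_0,\alpha}\in\Z_\ell$ is torsion, hence zero. So decomposable classes act trivially on torsion.

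(2) \emph{Surjectivity.} Given torsion $\alpha_0\in H^{a}(X)[\ell^n]$ and $\beta_0\in H^b(Y)[\ell^n]$, identity \eqref{eq:kp2} shows that $\partial(\tilde\alpha_0\boxtimes\tilde\beta_0)$ sends $\alpha\mapsto\ggen{\alpha_0,\alpha}_n\beta_0$ modulo $P_{\ell^n}$. For $n$ large, $P$ vanishes and $\ggen{-,-}_n$ is perfect on the relevant finite groups. These rank-one maps generate $\Hom(H^i(X)_\tors,H^{i+e}(Y)_\tors)$, because both groups are finite and we may take $n$ at least the exponent.

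(3) \emph{Injectivity modulo the image.} By counting: both the cokernel of $\boxtimes$ and the target are finite of the same order by the abstract K\"unneth identification above. Combined with (1), $(-)_*$ factors through the cokernel, and by (2) it is surjective. A surjection between finite groups of equal order is injective, which gives exactness in the middle. Injectivity of $\boxtimes$ is part of the usual K\"unneth theorem.

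The main obstacle is step (2): one must check that the boundary-of-product class $\partial(\tilde\alpha_0\boxtimes\tilde\beta_0)$ genuinely lies in $H^{2d_X+e}(X\times Y,\Z_\ell)$ in the right degree. One must also verify \eqref{eq:kp2} carefully, including signs and the fact that we only get an identity in $Q_{\ell^n}$. To settle this, I would take $n$ large enough that $P_{\ell^n}=0$ in all relevant degrees, so that $Q_{\ell^n}$ is exactly the torsion subgroup. I would then work at finite level with $\Z/\ell^n$ coefficients, using \eqref{eq:kp0} and the projection formula before passing to the limit.
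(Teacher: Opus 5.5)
\textbf{Gap in step (2).} Your architecture is the paper's: reduce to $\fh(X)$ and $\fh(Y)$; use the usual K\"unneth sequence with its $\mathrm{Tor}$ term; compare cardinalities via the torsion pairing; get the complex property from \eqref{eq:kp}; get surjectivity from \eqref{eq:kp2}. However, step (2) as you state it is false, and the problem is not signs or degrees.

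When $\ell^n$ is at least the exponent of the torsion, the maps $\alpha\mapsto\ggen{\alpha_0,\alpha}_n\beta_0$ do \emph{not} generate $\Hom(H^i(X,\Z_\ell)_\tors,H^{i+e}(Y,\Z_\ell)_\tors)$. Take both groups isomorphic to $\Z/\ell$, e.g.\ Enriques surfaces with $\ell=2$, which is exactly where the paper uses this result, and take $n\ge2$. Since $\ell\alpha=0$, bilinearity forces $\ggen{\alpha_0,\alpha}_n\in\ell^{n-1}\Z/\ell^n$. Since $\ell\beta_0=0$, every such map is then zero, while the target is $\Z/\ell$. More generally, if $\ell^m$ kills all the torsion and $n\ge2m$, every one of your maps vanishes. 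For $n\ge a,b$, the canonical map $\Hom(\Z/\ell^a,\Z/\ell^n)\otimes\Z/\ell^b\to\Hom(\Z/\ell^a,\Z/\ell^b)$ is onto only when $n=\max(a,b)$. So once the torsion groups have cyclic summands of different orders, no single level $n$ suffices.

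\textbf{What is needed instead.} You must use every level $n$, including small $n$ where $P_{\ell^n}\neq0$. At such levels \eqref{eq:kp2} only determines $\Gamma_*$ on $H^j(X,\Z_\ell)[\ell^n]$, and only modulo $P_{\ell^n}$. So $\Gamma_*$ is uncontrolled on elements of larger order, and the pieces obtained at different levels do not simply add up.

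The paper handles this with a separate lemma on finite abelian groups. Write $A=\bigoplus_iA^i$ and $B=\bigoplus_iB^i$ with $A^i,B^i$ free over $\Z/\ell^i$. One shows by induction on $N$ that the image $H$ of $(-)_*$ surjects onto $\Hom(A^{\le N},B^{\le N})$:
\begin{itemize}
\item By the inductive hypothesis, you may assume the $A^{\le N-1}\to B^{\le N-1}$ block of the target map vanishes.
\item At level $\ell^N$, the image of $\Hom(A_{\ell^N},\Z/\ell^N)\otimes B_{\ell^N}$ contains every map out of or into the free $\Z/\ell^N$-parts $A^N$ and $B^N$.
\item These maps cover the remaining blocks.
\end{itemize}
With this lemma in place of your generation claim, your counting step (3) completes the proof exactly as in the paper.
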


\begin{proof}
Because this sequence is compatible with the action of correspondences, we may and do assume that  $M=\fh(X)$ and $N=\fh(Y)$.
The exactness in this case is a consequence of the following four facts:
\begin{itemize}
\item
There is an exact sequence when we replace the final term by
\[
\bigoplus_{a+b=d_X+e+1}
\mr{Tor}_1^\Z(H^a(X,\Z_\ell)_\mr{tors},H^b(Y,\Z_\ell)_\mr{tors});
\]
this follows from the usual K\"unneth theorem in $\ell$-adic cohomology.
\item
The groups
\[
\bigoplus_i
\Hom(
H^i(X,\Z_\ell)_\mr{tors},H^{i+e}(Y,\Z_\ell)_\mr{tors}
)
\;\;\text{and}\;\;
\bigoplus_{a+b=d_X+e+1}
\mr{Tor}_1^\Z(H^a(X,\Z_\ell)_\mr{tors},H^b(Y,\Z_\ell)_\mr{tors})
\]
have the same cardinality; this follows from the fact that $\ggen{-,-}_n$ is perfect for $n$ sufficiently large.
\item
The sequence in question is a complex, i.e.\ $(\alpha_0\boxtimes\beta_0)_*\alpha=0$ for any torsion $\alpha$; this follows from the identity \eqref{eq:kp}.
\item
The map $(-)_*$ is surjective.
\end{itemize}
The last point is a bit more complicated to explain.
Fix $i\in\Z$, and set $A\ceq H^i(X,\Z_\ell)_\mr{tors}$ and $B\ceq H^{i+e}(Y,\Z_\ell)_\mr{tors}$ as well as $A_{\ell^n}\ceq Q_{\ell^n}^i(X)$ and $B_{\ell^n}\ceq Q_{\ell^n}^{i+e}(Y)$ in the notation of \eqref{thing:torsion-pairing}.
Let $H$ denote the image of $(-)_*$ in $\Hom(A,B)$.
Because $\ggen{-,-}_n\cln A_{\ell^n}\times B_{\ell^n}\to\Z/\ell^n$ is perfect, the formula \eqref{eq:kp2} implies that the image of $H$ in $\Hom(A_{\ell^n},B_{\ell^n})$ includes the image of the canonical map $\Hom(A_{\ell^n},\Z/\ell^n)\otimes B_{\ell^n}\to\Hom(A_{\ell^n},B_{\ell^n})$.
So, the surjectivity of $(-)_*$ is a special case of the following lemma.
\end{proof}

\begin{lem}
Let $A$ and $B$ be finite Abelian groups, and for each positive integer $n$, define
\[
A_n
\ceq
\frac{A[n]}{A[n]\cap nA}
\quad\text{and}\quad
B_n
\ceq
\frac{B[n]}{B[n]\cap nB}.
\]
Let $H$ be a subgroup of $\Hom(A,B)$ such that, for each integer $n$, the image of $H$ in $\Hom(A_n,B_n)$ includes the image of the canonical map
\[
\Phi_n
\cln
\Hom(A_n,\Z/n)\otimes B_n
\to
\Hom(A_n,B_n).
\]
Then $H=\Hom(A,B)$.
\end{lem}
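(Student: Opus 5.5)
The plan is a dévissage on cyclic summands, with $n$ running over well-chosen prime powers. First I reduce to the case where $A$ and $B$ are $\ell$-groups for a single prime $\ell$. Both $\Hom(A,B)$ and the hypothesis split along primary parts, and $A_n, B_n$ only see the $\ell$-part when $n$ is a power of $\ell$, so it suffices to use $n=\ell^m$. Then I write
\[
A=\bigoplus_j\Z/\ell^{a_j},\qquad B=\bigoplus_k\Z/\ell^{b_k}.
\]
Now $\Hom(A,B)$ is generated by the ``elementary'' maps $e_{jk}$. Each $e_{jk}$ projects to the $j$-th summand, sends $1\mapsto\ell^{\max(b_k-a_j,0)}$ in $\Z/\ell^{b_k}$, and includes into $B$. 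Multiples $\ell^r e_{jk}$ and sums of these give everything.

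Next I compute the targets explicitly. A direct check on cyclic groups gives
\[
(\Z/\ell^a)_{\ell^m}\cong\begin{cases}\Z/\ell^a,&a\le m,\\ \Z/\ell^{2m-a},&m<a<2m,\\ 0,&a\ge 2m,\end{cases}
\]
and in each case it is generated by the image of $\ell^{\max(a-m,0)}$. In particular, for $m=\min(a_j,b_k)$ both the $j$-th summand of $A_{\ell^m}$ and the $k$-th summand of $B_{\ell^m}$ are nonzero. I then check that some $f\otimes b$ in the image of $\Phi_{\ell^m}$ agrees with $e_{jk}$ on these summands: take $f$ the projection onto the $j$-th coordinate followed by $\Z/\ell^{a_j}\to\Z/\ell^m$, and $b$ the appropriate generator of the $k$-th summand. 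By hypothesis, $H$ then contains an element $h_{jk}$ whose image in $\Hom(A_{\ell^m},B_{\ell^m})$ equals that of $e_{jk}$.

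I then order the pairs $(j,k)$ by a ``size'' such as $\min(a_j,b_k)$, together with divisibility, and define $F_s\subseteq\Hom(A,B)$ as the subgroup generated by the $\ell^r e_{jk}$ of size at least $s$, counting $r>0$ as increasing the size. The goal is to show that $h_{jk}-e_{jk}$ lies in the part of the filtration that is already controlled. The kernel of $\Hom(A,B)\to\Hom(A_{\ell^m},B_{\ell^m})$ consists of maps that either kill $\ell^{\max(a-m,0)}$-generators or land in $\ell^m B+B[\ell^m]^{\perp}$. I expect this kernel to be spanned by elementary maps that are strictly more divisible or of strictly different size, hence lie in a smaller filtration step. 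If so, descending induction on the finite filtration, in the spirit of Nakayama's lemma applied to the nilpotent filtration, gives $e_{jk}\in H$ for all $(j,k)$, and hence $H=\Hom(A,B)$.

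The main obstacle is exactly this kernel bookkeeping. The map $\Hom(A,B)\to\Hom(A_n,B_n)$ is neither injective nor surjective in general. I need to choose the ordering, and possibly a different $m$ per pair (trying $m=\min(a_j,b_k)$ first, or $m=\max$ if that fails), so that the error terms are genuinely ``lower''. If the first choice fails, I would fall back to checking the statement separately for $\Hom(\Z/\ell^a,\Z/\ell^b)$ with $a\le b$ and with $a>b$, and then argue the cross terms by triangularity.
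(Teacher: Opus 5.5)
Your reduction to $\ell$-groups, the cyclic decomposition, and your formula for $(\Z/\ell^a)_{\ell^m}$ are fine. But the step that carries the whole lemma is only ``expected'', and your primary choice $m=\min(a_j,b_k)$ does not work.

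The claim that both relevant summands survive at $m=\min(a_j,b_k)$ contradicts your own formula whenever $\max(a_j,b_k)\ge 2\min(a_j,b_k)$. For $A=\Z/\ell$, $B=\Z/\ell^2$ and $m=1$ you get $B_\ell=0$. So the hypothesis at $n=\ell$ says nothing about the generator $1\mapsto\ell$ of $\Hom(A,B)$; that generator is only forced into $H$ by $n=\ell^2$.

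Worse, at level $m$ the kernel of $\Hom(A,B)\to\Hom(A_{\ell^m},B_{\ell^m})$ contains \emph{entire} blocks $\Hom(\Z/\ell^{a'},\Z/\ell^{b'})$ with $b'\ge 2m$. For these blocks $\min(a',b')$ can be smaller or larger than $m$. So the error $h_{jk}-e_{jk}$ is not confined to one side of any filtration by $\min$, and induction on $\min$ in either direction does not close. Your description of the kernel via ``$\ell^mB+B[\ell^m]^\perp$'' is not a usable statement, and the sentence beginning ``If so, descending induction\dots'' is precisely what needs proof.

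The missing idea is to take $m=\max(a_j,b_k)$. Then every cyclic summand of $A$ (resp.\ $B$) of exponent $\le m$ survives unchanged in $A_{\ell^m}$ (resp.\ $B_{\ell^m}$). This gives two facts. First, the reduction map is injective on every block $(j',k')$ with $\max(a_{j'},b_{k'})\le m$, so the error $h-e_{jk}$ lies in blocks with strictly larger $\max$. Second, one of the two cyclic factors of block $(j,k)$ is free over $\Z/\ell^m$, so $\Phi_{\ell^m}$ (which respects the block decomposition) hits the whole block. That gives $h\in H$ with the same image as $e_{jk}$.

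Now run descending induction on $\max$. The base case is the largest exponent, where $A_{\ell^m}=A$, $B_{\ell^m}=B$ and the reduction map is the identity. The induction yields $H=\Hom(A,B)$.

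This is essentially the paper's argument, organized differently. The paper inducts upward on $N$, proving that $H\to\Hom(A^{\le N},B^{\le N})$ is surjective using $n=\ell^N$. In that form the error terms in blocks with an exponent $>N$ are simply projected away instead of having to be absorbed.
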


\begin{proof}
Writing $A$ and $B$ as a direct sum of groups of prime-power order, we may and do assume that $A$ and $B$ are of $\ell$-power order for some fixed prime $\ell$.
Fix decompositions $A=\bigoplus_{i\geq1}A^i$ and $B=\bigoplus_{i\geq1}B^i$ where $A^i$ and $B^i$ are free over $\Z/\ell^i$.
Let us also write $A^{\leq N}\ceq\bigoplus_{i=1}^NA^i$, and likewise $B^{\leq N}$.
We will prove by induction on $N$ that the map (defined by the fixed decompositions)
\[
R_N
\cln
H
\to
\Hom(
A^{\leq N},
B^{\leq N}
)
\]
is surjective for all $N\geq0$.
The base case $N=0$ is trivial, so assume that $R_{N-1}$ is surjective for some $N\geq1$.
Fix $\phi\cln A^{\leq N}\to B^{\leq N}$; we aim to show that $\phi$ lifts to an element of $H$.
By the inductive hypothesis, we may and do assume that the map $A^{\leq N-1}\to B^{\leq N-1}$ induced by $\phi$ is zero.
Now consider the induced decompositions
\[
A_{\ell^N}=A^{\leq N-1}\oplus A^N\oplus\bigoplus_{i=1}^{N-1}\frac{\ell^iA^{N+i}}{\ell^N}
\eqqcolon
A(1)\oplus A(2)\oplus A(3)
\]
and similarly $B_{\ell^N}=B(1)\oplus B(2)\oplus B(3)$.
Then $\Phi_{\ell^N}$ factors as a direct sum of the canonical maps $\Phi_{i,j}\cln\Hom(A(i),\Z/\ell^N)\otimes B(j)\to\Hom(A(i),B(j))$ for $i,j\in\{1,2,3\}$.
In particular, since $A^N$ and $B^N$ are free over $\Z/\ell^N$, $\Phi_{i,j}$ is surjective if either $i=2$ or $j=2$.
This implies, since we already assumed that $\phi$ vanishes in $\Hom(A(1),B(1))$, that there is an element in the image of $\Phi_{\ell^N}$ whose image in $\Hom(A^{\leq N},B^{\leq N})=\bigoplus_{i,j=1}^2\Hom(A(i),B(j))$ is $\phi$.
Since elements in the image of $\Phi_{\ell^N}$ can be lifted to $H$, we are done.
\end{proof}

\bibliographystyle{amsalpha}
\bibliography{refs.bib}

\end{document}